\newcommand{\Dchaintwo}[4]{
\rule[-3\unitlength]{0pt}{8\unitlength}
\begin{picture}(14,5)(0,3)
\put(1,2){\ifthenelse{\equal{#1}{l}}{\circle*{2}}{\circle{2}}}
\put(2,2){\line(1,0){10}}
\put(13,2){\ifthenelse{\equal{#1}{r}}{\circle*{2}}{\circle{2}}}
\put(1,5){\makebox[0pt]{\scriptsize #2}}
\put(7,4){\makebox[0pt]{\scriptsize #3}}
\put(13,5){\makebox[0pt]{\scriptsize #4}}
\end{picture}}
\newcommand{\Hexagon}[7]{
\rule[-3\unitlength]{0pt}{8\unitlength}
\begin{picture}(30,20)(0,3)
\put(1,13){\ifthenelse{\equal{#1}{l}}{\circle*{2}}{\circle{2}}}
\put(2,14){\line(1,1){6}}
\put(9,21){\ifthenelse{\equal{#1}{l}}{\circle*{2}}{\circle{2}}}
\put(10,21){\line(1,0){8}}
\put(19,21){\ifthenelse{\equal{#1}{l}}{\circle*{2}}{\circle{2}}}
\put(20,20){\line(1,-1){6}}
\put(27,13){\ifthenelse{\equal{#1}{l}}{\circle*{2}}{\circle{2}}}
\put(26,12){\line(-1,-1){6}}
\put(19,5){\ifthenelse{\equal{#1}{l}}{\circle*{2}}{\circle{2}}}
\put(18,5){\line(-1,0){8}}
\put(9,5){\ifthenelse{\equal{#1}{l}}{\circle*{2}}{\circle{2}}}
\put(8,6){\line(-1,1){6}}
\put(-3,12){\makebox[0pt]{\scriptsize #2}}
\put(9,23){\makebox[0pt]{\scriptsize #3}}
\put(19,23){\makebox[0pt]{\scriptsize #4}}
\put(31,12){\makebox[0pt]{\scriptsize #5}}
\put(19,1){\makebox[0pt]{\scriptsize #6}}
\put(9,1){\makebox[0pt]{\scriptsize #7}}
\end{picture}}
\newcommand{\THexagon}[7]{
\rule[-3\unitlength]{0pt}{8\unitlength}
\begin{picture}(30,20)(0,3)
\put(1,13){\ifthenelse{\equal{#1}{l}}{\circle*{2}}{\circle{2}}}
\put(2,14){\line(1,1){6}}
\put(9,21){\ifthenelse{\equal{#1}{l}}{\circle*{2}}{\circle{2}}}
\put(10,21){\line(1,0){8}}
\put(19,21){\ifthenelse{\equal{#1}{l}}{\circle*{2}}{\circle{2}}}
\put(20,20){\line(1,-1){6}}
\put(27,13){\ifthenelse{\equal{#1}{l}}{\circle*{2}}{\circle{2}}}
\put(26,12){\line(-1,-1){6}}
\put(19,5){\ifthenelse{\equal{#1}{l}}{\circle*{2}}{\circle{2}}}
\put(18,5){\line(-1,0){8}}
\put(9,5){\ifthenelse{\equal{#1}{l}}{\circle*{2}}{\circle{2}}}
\put(8,6){\line(-1,1){6}}
\put(2,13){\line(1,0){24}}
\put(10,21){\line(1,-2){8}}
\put(18,21){\line(-1,-2){8}}
\put(-3,12){\makebox[0pt]{\scriptsize #2}}
\put(9,23){\makebox[0pt]{\scriptsize #3}}
\put(19,23){\makebox[0pt]{\scriptsize #4}}
\put(31,12){\makebox[0pt]{\scriptsize #5}}
\put(19,1){\makebox[0pt]{\scriptsize #6}}
\put(9,1){\makebox[0pt]{\scriptsize #7}}
\end{picture}}
\newcommand{\Dchainfour}[8]{
\rule[-3\unitlength]{0pt}{5\unitlength}
\begin{picture}(38,5)(0,3)
\put(1,2){\ifthenelse{\equal{#1}{1}}{\circle*{2}}{\circle{2}}}
\put(2,2){\line(1,0){10}}
\put(13,2){\ifthenelse{\equal{#1}{2}}{\circle*{2}}{\circle{2}}}
\put(25,2){\ifthenelse{\equal{#1}{3}}{\circle*{2}}{\circle{2}}}
\put(26,2){\line(1,0){10}}
\put(37,2){\ifthenelse{\equal{#1}{4}}{\circle*{2}}{\circle{2}}}
\put(1,5){\makebox[0pt]{\scriptsize #2}}
\put(7,4){\makebox[0pt]{\scriptsize #3}}
\put(13,5){\makebox[0pt]{\scriptsize #4}}
\put(19,4){\makebox[0pt]{\scriptsize #5}}
\put(25,5){\makebox[0pt]{\scriptsize #6}}
\put(31,4){\makebox[0pt]{\scriptsize #7}}
\put(37,5){\makebox[0pt]{\scriptsize #8}}
\end{picture}}
\newcommand{\Nchainfour}[9]{
\rule[-4\unitlength]{0pt}{5\unitlength}
\begin{picture}(30,30)(0,3)
\put(2,4){\circle{2}}
\put(3,4){\line(1,0){20}}
\put(3,5){\line(1,1){20.2}}
\put(23,25){\line(-1,-1){20.2}}
\put(2,5){\line(0,1){20}}
\put(24,4){\circle{2}}
\put(24,5){\line(0,1){20}}
\put(23,5){\line(-1,1){20.2}}
\put(3,25){\line(1,-1){20.2}}
\put(24,26){\circle{2}}
\put(23,26){\line(-1,0){20}}
\put(2,26){\circle{2}}
\put(2,28){\makebox[0pt]{\scriptsize #1}}
\put(24,28){\makebox[0pt]{\scriptsize #2}}
\put(24,1){\makebox[0pt]{\scriptsize #3}}
\put(2,1){\makebox[0pt]{\scriptsize #4}}
\put(-2,15){\makebox[0pt]{\scriptsize #5}}
\put(13,27){\makebox[0pt]{\scriptsize #6}}
\put(28,15){\makebox[0pt]{\scriptsize #7}}
\put(13,2){\makebox[0pt]{\scriptsize #8}}
\put(17,22){\makebox[0pt]{\scriptsize #9}}
\put(19,12){\makebox[0pt]{\scriptsize #9}}
\end{picture}}
\newcommand{\Rchainfour}[9]{
\rule[-4\unitlength]{0pt}{5\unitlength}
\begin{picture}(30,25)(0,3)
\put(2,15){\circle{2}}
\put(3,16){\line(2,1){12}}
\put(15,22){\line(-2,-1){12.3}}
\put(3,14){\line(2,-1){12}}
\put(15,8){\line(-2,1){12.3}}
\put(17,22){\line(2,-1){12.3}}
\put(17,8){\line(2,1){12.3}}
\put(16,22){\circle{2}}
\put(16,9){\line(0,1){12}}
\put(16,8){\circle{2}}
\put(30,15){\circle{2}}
\put(2,17){\makebox[0pt]{\scriptsize #1}}
\put(16,25){\makebox[0pt]{\scriptsize #2}}
\put(30,17){\makebox[0pt]{\scriptsize #3}}
\put(16,5){\makebox[0pt]{\scriptsize #4}}
\put(9,21){\makebox[0pt]{\scriptsize #5}}
\put(23,21){\makebox[0pt]{\scriptsize #6}}
\put(24,9){\makebox[0pt]{\scriptsize #7}}
\put(8,9){\makebox[0pt]{\scriptsize #8}}
\put(18,15){\makebox[0pt]{\scriptsize #9}}
\end{picture}}
\newcommand{\Echainfour}[8]{
\rule[-4\unitlength]{0pt}{5\unitlength}
\begin{picture}(20,20)(0,3)
\put(2,4){\circle{2}}
\put(3,4){\line(1,0){10}}
\put(2,5){\line(0,1){10}}
\put(14,4){\circle{2}}
\put(14,5){\line(0,1){10}}
\put(14,16){\circle{2}}
\put(13,16){\line(-1,0){10}}
\put(2,16){\circle{2}}
\put(8,1){\makebox[0pt]{\scriptsize #1}}
\put(2,1){\makebox[0pt]{\scriptsize #2}}
\put(14,1){\makebox[0pt]{\scriptsize #3}}
\put(2,18){\makebox[0pt]{\scriptsize #4}}
\put(14,18){\makebox[0pt]{\scriptsize #5}}
\put(18,10){\makebox[0pt]{\scriptsize #6}}
\put(8,17){\makebox[0pt]{\scriptsize #7}}
\put(-2,10){\makebox[0pt]{\scriptsize #8}}
\end{picture}}
\newcommand{\Chainfour}[6]{
\rule[-4\unitlength]{0pt}{5\unitlength}
\begin{picture}(40,4)(0,3)
\put(1,1){\circle{2}}
\put(2,1){\line(1,0){10}}
\put(13,1){\circle{2}}
\put(25,1){\circle{2}}
\put(26,1){\line(1,0){10}}
\put(37,1){\circle{2}}
\put(1,4){\makebox[0pt]{\scriptsize #1}}
\put(7,3){\makebox[0pt]{\scriptsize #2}}
\put(13,4){\makebox[0pt]{\scriptsize #3}}
\put(25,4){\makebox[0pt]{\scriptsize #4}}
\put(31,3){\makebox[0pt]{\scriptsize #5}}
\put(37,4){\makebox[0pt]{\scriptsize #6}}
\end{picture}}
\newcommand{\Mchainfour}[7]{
\rule[-4\unitlength]{0pt}{5\unitlength}
\begin{picture}(40,4)(0,3)
\put(1,1){\circle{2}}
\put(2,1){\line(1,0){10}}
\put(13,1){\circle{2}}
\put(14,1){\line(1,0){10}}
\put(25,1){\circle{2}}
\put(26,1){\line(1,0){10}}
\put(37,1){\circle{2}}
\put(1,4){\makebox[0pt]{\scriptsize #1}}
\put(7,3){\makebox[0pt]{\scriptsize #2}}
\put(13,4){\makebox[0pt]{\scriptsize #3}}
\put(19,3){\makebox[0pt]{\scriptsize #4}}
\put(25,4){\makebox[0pt]{\scriptsize #5}}
\put(31,3){\makebox[0pt]{\scriptsize #6}}
\put(37,4){\makebox[0pt]{\scriptsize #7}}
\end{picture}}
\def \To{\longrightarrow}
\def \dim{\operatorname{dim}}
\def \deg{\operatorname{deg}}
\def \Deg{\operatorname{Deg}}
\def \D{\operatorname{d}}
\def \Rank{\operatorname{Rank}}
\def \GKdim{\operatorname{GKdim}}
\def \ad{\operatorname{ad}}
\def \lim{\operatorname{lim}}
\def \sup{\operatorname{sup}}
\def \log{\operatorname{log}}
\def \mod{\operatorname{mod}}
\def \N{\mathbb{N}}
\def \D{\Delta}
\def \e{\varepsilon}
\def \Z{\mathbb{Z}}
\def \k{\mathbbm{k}}
\def \1{\mathbf{1}}
\def \id{\operatorname{id}}
\numberwithin{equation}{section}
\newtheorem{theorem}{Theorem}[section]
\newtheorem{lemma}[theorem]{Lemma}
\newtheorem{proposition}[theorem]{Proposition}
\newtheorem{corollary}[theorem]{Corollary}
\newtheorem{definition}[theorem]{Definition}
\newtheorem{remark}[theorem]{Remark}
\begin{document}

\title[Finite GK-dimensional Nichols algebras]{On the classification of finite GK-dimensional pre-Nichols algebras and quasi-quantum groups}

\author{Yuping Yang}
\address{School of Mathematics and statistics, Southwest University, Chongqing 400715, China}
\email{yupingyang@swu.edu.cn}

\keywords{Hopf algebra, pre-Nichols algebra,  GK-dimension, quasi-quantum group
\\ MSC2020: 16T05, 17B37, 18M05.}
\date{}
\maketitle

\begin{abstract} 
We prove that every pre-Nichols algebra of a nondiagonal object in the twisted Yetter-Drinfeld category ${_{\k G}^{\k G} {\mathcal{YD}^\Phi}}$ has infinite Gelfand-Kirillov dimension, where $G$ is a finite abelian group and $\Phi$ is a $3$-cocycle on $G$. This leads to a complete characterization of finite GK-dimensional Nichols algebras in this category. Specifically, for any finite-dimensional $V\in {_{\k G}^{\k G} {\mathcal{YD}^\Phi}}$, we show that the Nichols algebra $B(V)$ has finite Gelfand-Kirillov dimension if and only if it is of diagonal type and its associated root system is finite, that is, an arithmetic root system. Via bosonization, this result yields a classification of finite GK-dimensional coradically graded pointed coquasi-Hopf algebras over finite abelian groups that are generated by group-like and skew-primitive elements.
\end{abstract}

\section{introduction}

This work contributes to the classification of infinite-dimensional pointed coquasi-Hopf algebras of finite Gelfand-Kirillov dimension (abbreviated as GKdim), along with their corresponding pre-Nichols algebras.
Throughout, we work over an algebraically closed field $\k$ of characteristic zero.

Over the past two decades, the lifting method introduced by Andruskiewitsch and Schneider \cite{AS1} has led to significant advances in the classification of finite-dimensional pointed Hopf algebras, see \cite{AS2, A2,H0, H4, HMV,HV1} and the references therein. Notably, Heckenberger \cite{H0}  developed the theory of Weyl groupoids and arithmetic root systems, which has since become an essential tool in the classification program for pointed Hopf algebras. More recently,  Andruskiewitsch, Angiono, Heckenberger, and their collaborators have extended this classification effort to pointed Hopf algebras with finite GKdim over abelian groups \cite{AAH, AAH2, AC24, AI22}. A central objective in this research direction is the classification of finite GK-dimensional pre-Nichols algebras in the Yetter-Drinfeld module category $_{\k G}^{\k G} {\mathcal{YD}}$, where $G$ is an abelian group.

Recall that a pre-Nichols algebra of a braided vector space $V$ is an $\N_0$-graded connected braided Hopf algebra $\mathcal{P}=\oplus_{i\geq 0} \mathcal{P}_i$ which is generated by $\mathcal{P}_1=V$. Let $T(V)$ and $B(V)$ denote the tensor algebra and the Nichols algebra of $V$ respectively, and let $\mathcal{P}(V)$ be a pre-Nichols algebra of $V$. Then there are two canonical surjective homomorphisms of graded braided Hopf algebras: 
$$T(V)\twoheadrightarrow \mathcal{P}(V)\twoheadrightarrow B(V).$$
Thus, a natural strategy for classifying finite GK-dimensional pre-Nichols algebras in ${_{\k G}^{\k G} {\mathcal{YD}}}$ is as follows. First, classify all finite GK-dimensional Nichols algebras. Then, for each such Nichols algebra, determine all finite GK-dimensional pre-Nichols algebras that cover it. 

The study of pointed coquasi-Hopf algebras, or dually basic quasi-Hopf algebras, is motivated by the theory of pointed tensor categories \cite{EGNO}. In \cite{EG1,EG2,EG3, G}, Etingof and Gelaki started the classification of finite dimensional basic quasi-Hopf algebras. Angiono \cite{A1} classified finite dimensional basic quasi-Hopf algebras over cyclic groups with orders not divided by small prime numbers. In \cite{HLY, HLY15}, Huang, Liu and Ye presented a classification of finite dimensional basic quasi-Hopf algebras via representation type. In a series of works \cite{HLYY,HLYY2,HLYY3,HYZ}, the author of the present paper and his collaborators classified finite dimensional coradically graded pointed coquasi-Hopf algebras over abelian groups, and partially proved a generation conjecture \cite[Conjecture 5.11.10]{EGNO} of pointed finite tensor categories. However, until now there are barely any classification results of infinite dimensional pointed coquasi-Hopf algebras which are not Hopf algebras. 

In this paper, we initiate the study of infinite dimensional pointed coquasi-Hopf algebras with finite GKdim. Adopting a quasi-version of the lifting method, we focus primarily on  Nichols algebras and pre-Nichols algebras in the twisted Yetter-Drinfeld category ${_{\k G}^{\k G} {\mathcal{YD}^\Phi}}$, where $G$ is a finite abelian group and $\Phi$ is a $3$-cocycle on $G$. It is worth noting that  Angiono and Garc\'ia Iglesias \cite{AI22} proved that a Nichols algebra of diagonal type in ${_{\k G}^{\k G} {\mathcal{YD}}}$ has finite GKdim if and only if its root system is finite. Using the the reduction method of \cite{HLYY2, HLYY3}, this result can be generalized to Nichols algebras of diagonal type in ${_{\k G}^{\k G} {\mathcal{YD}^\Phi}}$. Hence, the principal objective of this paper is to treat the case of Nichols algebras of nondiagonal type.

The novelty  of this work lies primarily in the following two aspects. First, we investigate the homogenous subalgebra $A(W)$ generated by $W$ inside a graded pre-Nichols algebra $P(V)$, and establish a sufficient condition on $W$ for $A(W)$ itself to be a pre-Nichols algebra. By utilizing this construction, we derive the key inequalities: $$\GKdim(B(W))\leq \GKdim(A(W))\leq \GKdim(\mathcal{P}(V)).$$ 
Consequently, we are able to employ arithmetic root systems to study $\GKdim(B(W))$ and, via the above inequalities, to obtain estimates for $\GKdim(\mathcal{P}(V))$.
Second, we introduce the concept of minimal nondiagonal objects in ${_{\k G}^{\k G} {\mathcal{YD}^\Phi}}$ and furnish a detailed structural characterization of such objects. This desription allows us to compute the GKdim of their associated Nichols algebras, ultimately proving that every Nichols algebra of nondiagonal type in ${_{\k G}^{\k G} {\mathcal{YD}^\Phi}}$ has infinite GKdim.

Now we present the main result of this paper. The following is our first main theorem, which provides a classification of finite GK-dimensional Nichols algebras of simple objects in ${_{\k G}^{\k G} \mathcal{YD}^\Phi}$. 

\begin{theorem}[Theorem \ref{t4.6}]\label{t1.1}
Let $V\in {_{\k G}^{\k G} \mathcal{YD}^\Phi}$ be a simple twisted Yetter-Drinfeld module with $\dim(V)\geq 2$, and the $G$-degree of $V$ is $g$. Then $B(V)$ has finite GKdim if and only if $V$ is one of the following types:
\begin{itemize}
\item[(T1)] $g\triangleright v=v$ for all $v\in V.$
\item[(T2)] $g\triangleright v=-v$ for all $v\in V.$
\item[(T3)] $g\triangleright v=\zeta_3 v$ for all $v\in V$ and $\dim(V)=2$, where $\zeta_3$ is a primitive third root of unity.
\end{itemize}
\end{theorem}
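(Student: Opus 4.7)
My plan combines an explicit description of the braiding carried by a simple twisted Yetter-Drinfeld module, the twist-equivalence reduction to an ordinary Yetter-Drinfeld category generalized in this paper, and the Heckenberger--Angiono--Garc\'ia Iglesias classification of finite GK-dimensional Nichols algebras of diagonal type. The first step is to unpack the structure of a simple $V\in{_{\k G}^{\k G}\mathcal{YD}^\Phi}$ of $G$-degree $g$: such a $V$ is an irreducible module for a twisted group algebra $\k^{\Phi_g}[G]$, so the action of $g$ itself is diagonalizable. Fix a basis $\{e_1,\dots,e_n\}$ of $g$-eigenvectors with eigenvalues $\lambda_1,\dots,\lambda_n\in\k^\ast$. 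Because every tensor factor of $V\otimes V$ lies in the single $G$-degree $g$, the $3$-cocycle contributions to the twisted braiding collapse into an overall scalar $c=c(g)\in\k^\ast$, yielding
$$\mathcal{R}(e_i\otimes e_j)=c\lambda_j\,e_j\otimes e_i.$$
Thus, as a braided vector space, $V$ is of diagonal type with bicharacter $q_{ij}=c\lambda_j$; in particular $q_{ii}=c\lambda_i$ and $q_{ij}q_{ji}=q_{ii}q_{jj}$, an unusually rigid identity.

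Because $V$ is of diagonal type I would next invoke the reduction developed in this paper (the broadening of \cite{HLYY2,HLYY3} to graded pre-Nichols algebras of diagonal type): $B(V)$ is twist-equivalent to a Nichols algebra $B(\tilde V)$ in an ordinary Yetter-Drinfeld category ${_{\k \mathbb{G}}^{\k \mathbb{G}}\mathcal{YD}}$ carrying the same underlying bicharacter, and twist equivalence preserves $\GKdim$. By Angiono--Garc\'ia Iglesias \cite{AI22} the question therefore reduces to deciding when the bicharacter $q_{ij}=c\lambda_j$ defines an arithmetic root system in the sense of Heckenberger.

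I would then run a case analysis against Heckenberger's list. If all $\lambda_i$ coincide (scalar action $g\triangleright v=\lambda v$), the uniform matrix $q_{ij}=c\lambda$ has generalized Dynkin diagram with all vertices labelled $c\lambda$ and all edge labels $(c\lambda)^2$; Heckenberger's tables permit this only when $c\lambda\in\{1,-1\}$ for any $n$, giving (T1) and (T2) after absorbing $c$ into the scalar, or when $c\lambda$ is a primitive third root of unity and $n=2$, giving (T3). If the $\lambda_i$ are not all equal, pick a rank-$2$ sub-diagram built from two distinct eigenvalues $\lambda_i\ne\lambda_j$: it has vertex labels $c\lambda_i,c\lambda_j$ and edge label $c^2\lambda_i\lambda_j=q_{ii}q_{jj}$, and by direct inspection this rigid configuration never appears in Heckenberger's rank-$2$ arithmetic list, forcing $\GKdim B(V)=\infty$. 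The if-direction follows by a direct computation: $B(V)$ is the symmetric algebra in (T1), the exterior algebra in (T2), and the well-known finite-dimensional Nichols algebra of $A_2$-type at a cube root of unity in (T3).

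The main obstacle is this last case analysis, especially eliminating the mixed-eigenvalue possibilities. Although the identity $q_{ij}q_{ji}=q_{ii}q_{jj}$ strongly restricts admissible rank-$2$ sub-diagrams, one must still verify by inspection of Heckenberger's tables that no diagram with distinct $q_{ii}\ne q_{jj}$ and edge label $q_{ii}q_{jj}$ is arithmetic, and that in the uniform case only $q_{ii}\in\{1,-1,\zeta_3\}$ is arithmetic, with the further restriction $\dim V\le 2$ when $q_{ii}=\zeta_3$. A subsidiary point to check is that the scalar $c=c(g)$ arising from the $3$-cocycle is compatible with the reduction (i.e.\ it can be absorbed into the eigenvalues without changing the underlying arithmetic root system datum); this is exactly what the generalized reduction of this paper accomplishes.
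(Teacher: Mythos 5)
Your overall strategy (diagonal braiding, twist reduction to an ordinary Yetter--Drinfeld category, then Heckenberger/Angiono--Garc\'ia Iglesias) is the same as the paper's, but there is a genuine gap in the structural step. The paper's proof of this statement (Proposition \ref{p4.6}) rests on the fact that the degree element $g$ acts on a \emph{simple} $V$ by a single scalar $\zeta$: by the cyclic symmetry of Lemma \ref{l5.1} applied with $g_1=g_3=g$ one gets $\Phi_g(g,x)=\Phi_g(x,g)$ for all $x\in G$, so the operator $g\triangleright(-)$ commutes with the whole projective $G$-action on $V$ and Schur's lemma forces $g\triangleright v=\zeta v$. Consequently the braiding matrix is constant, $q_{ij}=\zeta$ for all $i,j$ (note also that by \eqref{eq2.7} the braiding is exactly $\mathcal{R}(X\otimes Y)=g\triangleright Y\otimes X$, with no extra $3$-cocycle scalar $c$, so your ``absorbing $c$'' worry is moot), and the whole proof reduces to the uniform case you treat at the end. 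You never establish this scalar-action fact; instead you allow distinct eigenvalues $\lambda_i\neq\lambda_j$ and try to eliminate the mixed case by asserting that a rank-$2$ diagram with vertex labels $a\neq b$ and edge label $ab$ never occurs in Heckenberger's arithmetic list.

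That assertion is false, so the elimination step does not close. If $ab=1$ the two vertices are disconnected, and a disconnected rank-$2$ diagram is always of finite type (its root system is $\{\pm e_1,\pm e_2\}$). Even with an edge there are arithmetic examples of your ``rigid'' shape: take $a=\mathrm{i}$ (a primitive $4$-th root of unity) and $b=-1$; then the edge label $ab=-\mathrm{i}=a^{-1}$ and the diagram is the family with labels $q,\ q^{-1},\ -1$ at $q=\mathrm{i}$, which is in Heckenberger's list (and even gives a finite-dimensional Nichols algebra). So if mixed eigenvalues could occur, your argument would not yield $\GKdim B(V)=\infty$, and in fact the ``only if'' direction of the theorem -- whose types (T1)--(T3) presuppose that $g$ acts by a scalar -- would remain unproved. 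The fix is exactly the missing lemma above ($\Phi_g(g,\cdot)=\Phi_g(\cdot,g)$ plus Schur), after which the mixed case is vacuous and your uniform-case inspection ($\zeta\in\{1,-1\}$ for any dimension, $\zeta=\zeta_3$ only with $\dim V=2$, all other $\zeta$ excluded via the complete graph with vertices $\zeta$ and edges $\zeta^2$) agrees with the paper's argument.
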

If $V$ is of type (T1), then $B(V)\cong S(V)$ and $\GKdim(B(V))=\dim(V)$. For types (T2) and (T3), it was shown in \cite{HYZ} that $B(V)$ is finite dimensional, and hence $\GKdim(B(V))=0$. 

Theorem \ref{t1.1} provides a key preliminary step for studying finite GK-dimensional pre-Nichols algebras in the twisted Yetter-Drinfeld category ${_{\k G}^{\k G} \mathcal{YD}^\Phi}$. According to this theorem, it is enough to consider pre-Nichols algebras of objects of the form $\oplus_{1\leq i\leq \theta}V_i$, where each $V_i$ is of type (T1)-(T3). Our second main theorem concerns the GKdim of pre-Nichols algebras of nondiagonal type. 

\begin{theorem}[Theorem \ref{t8.3}]\label{t1.2}
Let $V\in {_{\k G}^{\k G}\mathcal{Y}\mathcal{D}^{\Phi}}$ be a nondiagonal object. Then every pre-Nichols algebra of $V$ has infinite GKdim.
 \end{theorem}

Using this result and applying the method developed in our earlier work \cite{HLYY2,HLYY3}, we can readily classify all finite GK-dimensional Nichols algebras in ${_{\k G}^{\k G} {\mathcal{YD}^\Phi}}$. 

\begin{theorem}[Theorem \ref{t8.4}]\label{t1.3}
Let $V\in {_{\k G}^{\k G} {\mathcal{YD}^\Phi}}$ be a finite-dimensional object. Then $B(V)$ has finite GKdim if and only if it is of diagonal type and the corresponding root system is finite, that is, an arithmetic root system.
\end{theorem}

Via bosonization, this theorem also yields a classification of finite GK-dimensional coradically graded pointed coquasi-Hopf algebras over finite abelian groups that are generated by group-like and skew-primitive elements, see Corollary \ref{c8.5}.

This paper is organized as follows. In Section 2, we recall necessary definitions and facts about twisted Yetter-Drinfeld modules, pre-Nichols algebras, generalized root systems and Gelfand-Kirillov dimension.  Section 3 is devoted to graded pre-Nichols algebras and their homogenous subalgebras. In Sections 4, we classify finite GK-dimensional Nichols algebras of diagonal type in ${_{\k G}^{\k G} {\mathcal{YD}^\Phi}}$, and prove Theorem \ref{t1.1}. Sections 5-7 are devoted to the study of Nichols algebras of minimal nondiagonal objects.  Finally, in Section 8, we prove Theorems \ref{t1.2}-\ref{t1.3}.

\section{Preliminaries}
In this section, we recall some necessary notions and basic facts about twisted Yetter-Drinfeld modules, pre-Nichols algebras, generalized root systems and GK-dimension.  Throughout, let $\N$ be the set of positive integers and $\N_0=\N\cup \{0\}$, let $\Z$ be the ring of integers and $\Z_m$ a cyclic group of order $m.$

\subsection{Twisted Yetter-Drinfeld module}
Let $G$ be a finite group. It is well known that the group algebra $\k G$ is a Hopf algebra with $\D(g)=g\otimes g,\; S(g)=g^{-1}$ and
$\e(g)=1$ for any $g\in G$. Let $\Phi$ be a normalized $3$-cocycle on $G$, that is a function $\Phi: G\times G\times G\to \k^*$ such that 
\begin{eqnarray}
&\Phi(ef,g,h)\Phi(e,f,gh)=\Phi(e,f,g)\Phi(e,fg,h)\Phi(f,g,h),\\
&\Phi(f,1,g)=1
\end{eqnarray}
for all $e,f,g,h \in G$. The linear extension of $\Phi$ to $(\k G)^{\otimes 3}$, also denote by $\Phi$, becomes a
 convolution-invertible map. Let $\alpha,\beta \colon \k G \to \k$ be two linear functions defined by \[ \alpha(g):=\e(g) \quad \text{and} \quad \beta(g):=\frac{1}{\Phi(g,g^{-1},g)}, \quad \forall g\in G.\]
Then $kG$ together with these $\Phi, \ \alpha$ and $\beta$
makes a coquasi-Hopf algebra, which will be written as $(\k G,\Phi)$ in what follows. In order to distinguish the Yetter-Drinfeld modules of the group algebra $\k G$, the Yetter-Drinfeld modules of $(\k G,\Phi)$ are called twisted Yetter-Drinfeld modules when $\Phi$ is not trivial. The category of  twisted Yetter-Drinfeld modules of $(\k G,\Phi)$ will be denoted by $_{\k G}^{\k G}\mathcal{Y}\mathcal{D}^{\Phi}$.

Next we recall the construction of $_{\k G}^{\k G}\mathcal{Y}\mathcal{D}^{\Phi}$ when $G$ is a finite abelian group. For each $g\in G$, let $\Phi_g:G\times G\to \k^*$ be the function:
\begin{equation}\label{e2.3}
{\Phi}_g(x,y)=\frac{\Phi(g,x,y)\Phi(x,y,g)}{\Phi(x,g,y)},  \quad \forall x,y \in G.
\end{equation}
 By direct computation one can show that ${\Phi}_g$ is a 2-cocycle on $G$. The construction of $_{\k G}^{\k G}\mathcal{Y}\mathcal{D}^{\Phi}$ can be summarized as follows, see \cite{HY} for detailed computations.

\begin{definition}\label{d2.1}
An object in $_{\k G}^{\k G}\mathcal{Y}\mathcal{D}^{\Phi}$ is a  $G$-graded vector space $V=\oplus_{g\in G}V_g$ such that each $V_g$ is a projective
$G$-representation with respect to the 2-cocycle ${\Phi}_g,$ namely for any $e,f\in G, v\in V_g$ we have 
\begin{equation}\label{eq2.4}
e\triangleright(f\triangleright v)={\Phi}_g(e,f) (ef)\triangleright v.
\end{equation}
The comodule structure $\rho: V\to kG\otimes V$ is given by 
\begin{equation}
\rho(v)=g\otimes v, \ \  v\in V_g,\ \ g\in G.
\end{equation}
The module structure on the tensor product $V_g\otimes V_h$ is determined by
\begin{equation}\label{n3.8}
e\triangleright (X\otimes Y)={\Phi}_e(g,h)(e\triangleright X)\otimes (e\triangleright Y), \ X\in V_g, \ Y\in V_h.
\end{equation}
The associativity and the braiding constraints of $_{\k G}^{\k G}\mathcal{Y}\mathcal{D}^{\Phi}$ are given respectively by
\begin{eqnarray}
&\ a_{V_e,V_f,V_g}((X\otimes Y)\otimes Z) =\Phi(e,f,g)^{-1} X\otimes (Y\otimes Z ),\label{e2.7}\\
&\mathcal{R}(X\otimes Y)=(e \triangleright Y)\otimes X\label{eq2.7}
\end{eqnarray}
for all $X\in V_e,\  Y\in V_f,\  Z \in V_g.$
\end{definition}

Moreover, suppose $V_g$ is $(G,{\Phi}_g)$-representation, $V_h$ is a $(G,{\Phi}_h)$-representation, then $V_g\otimes V_h$ is a $(G,{\Phi}_{gh})$-representation. The dual object $V_g^*$ of $V_g$ is a $(G,{\Phi}_{g^{-1}})$-representation and $(V_g^*)^*=V_g$, see \cite[Proposition 2.5]{HYZ} for details. Suppose $V\in {_{\k G}^{\k G}\mathcal{Y}\mathcal{D}^{\Phi}}$ is a simple twisted Yetter-Drinfeld module, then there exists an element $g \in G$ such that $V=V_g$, and we say that the $G$-degree of $V$ is $g$ and denote $g_V=g$. Moreover, for each nonzero element $X\in V=V_g$, we denote $g_X=g$. Let $V=V_1\oplus V_2\oplus \cdots \oplus V_\theta$ be a direct sum of simple objects in ${^{\k G}_{\k G}\mathcal{Y}\mathcal{D}^\Phi}$, $g_i$ the $G$-degree of $V_i$ for $1\leq i\leq \theta$. Then the subgroup $G_V:=\langle g_1,g_2,\ldots, g_\theta\rangle$ will be called the {\bf support group} of $V$. According to \eqref{eq2.7}, it is clear that the braiding of $V$ is determined by the action of $G_V$.

Let $(V,\triangleright, \rho)\in {^{\k G}_{\k G}\mathcal{YD}^\Phi}$ and let $J$ be a $2$-cochain on $G$. Then there is a new action $\triangleright_J$ of $G$ on $V$ determined by
\begin{equation}\label{eq2.8}
g\triangleright_J X=\frac{J(g,x)}{J(x,g)}g\triangleright X,
\end{equation}
where $g\in G$ and $X$ is a homogenous element with $G$-degree $x=g_X$.  We denote $(V, \triangleright_{J},\rho)$ by $V^{J}$, and one can verify that $V^J\in {^{\k G}_{\k G}\mathcal{Y}\mathcal{D}^{\Phi\ast \partial(J)}}.$ Moreover there is a tensor equivalence $$(F_J,\varphi_0,\varphi_2)\colon\ ^{\k G}_{\k G}\mathcal{Y}\mathcal{D}^\Phi \to {^{\k G}_{\k G}\mathcal{Y}\mathcal{D}^{\Phi\ast\partial (J)}},$$ which takes $V$ to $V^J$ and $$\varphi_2(U,V)\colon\ (U\otimes V)^J\to U^J\otimes V^J,\ \ Y\otimes Z\mapsto J(y,z)^{-1}Y\otimes Z$$ for $Y\in U,\  Z\in V.$

\subsection{$3$-cocycles}

A further understanding of the category $_{\k G}^{\k G}\mathcal{Y}\mathcal{D}^{\Phi}$ depends on the explicit formula of the $3$-cocycle $\Phi$ on $G$.  
Recall that a complete list of the representatives of $\operatorname{H}^3(G,\k^*)$ was obtained in \cite{HLYY2}. Suppose $G= \mathbb{Z}_{m_{1}}\times\cdots \times\mathbb{Z}_{m_{n}}$ with $m_j\in \mathbb{N}$
for $1\leq j\leq n.$  Denote by $\mathcal{A}$ the set of all $\N$-sequences
\begin{equation}\label{cs}(c_{1},\ldots,c_{l},\ldots,c_{n},c_{12},\ldots,c_{ij},\ldots,c_{n-1,n},c_{123},
\ldots,c_{rst},\ldots,c_{n-2,n-1,n})\end{equation}
such that $ 0\leq c_{l}<m_{l}, \ 0\leq c_{ij}<(m_{i},m_{j}), \ 0\leq c_{rst}<(m_{r},m_{s},m_{t})$ for $1\leq l\leq n, \ 1\leq i<j\leq n, \ 1\leq r<s<t\leq n$, where $c_{ij}$ and $c_{rst}$ are ordered in the lexicographic order of their indices. We denote by $\underline{\mathbf{c}}$ the sequence \eqref{cs}  in the following. Let $g_i$ be a generator of $\mathbb{Z}_{m_{i}}, 1\leq i\leq n$. For any $\underline{\mathbf{c}}\in \mathcal{A}$, define
\begin{eqnarray}\label{eq2.10}
&& \omega_{\underline{\mathbf{c}}}\colon G\times G\times G\To \k^{\ast} \notag \\
&&[g_{1}^{i_{1}}\cdots g_{n}^{i_{n}},g_{1}^{j_{1}}\cdots g_{n}^{j_{n}},g_{1}^{k_{1}}\cdots g_{n}^{k_{n}}] \mapsto \\ 
&& \prod_{l=1}^{n}\zeta_{m_l}^{c_{l}i_{l}[\frac{j_{l}+k_{l}}{m_{l}}]}
\prod_{1\leq s<t\leq n}\zeta_{m_{t}}^{c_{st}i_{t}[\frac{j_{s}+k_{s}}{m_{s}}]}
\prod_{1\leq r<s<t\leq n}\zeta_{(m_{r},m_{s},m_{t})}^{c_{rst}i_{r}j_{s}k_{t}}. \notag
\end{eqnarray}
Here and below $\zeta_m$ stands for an $m$-th primitive root of unity.
According to \cite[Proposition 3.8]{HLYY2},   $\{\omega_{\underline{\mathbf{c}}} \mid \underline{\mathbf{c}}\in \mathcal{A}\}$ forms a complete set of representatives of the normalized $3$-cocycles on $G$ up to $3$-cohomology.

A normalized $3$-cocycle $\Phi$ on $G$ is called an abelian $3$-cocycle if $^{\k G}_{\k G}\mathcal{Y}\mathcal{D}^\Phi$ is pointed, i.e. each simple object of $^{\k G}_{\k G}\mathcal{Y}\mathcal{D}^\Phi$ is $1$-dimensional.  
Let $\Phi$ be a $3$-cocycle on $G$ of the form \eqref{eq2.10}.  In \cite[Proposition 3.14]{HLYY2}, it is proved that $\Phi$ is an abelian $3$-cocycle if and only if $c_{rst}=0$ for all $1\leq r<s<t\leq n$. So up to $3$-cohomology, an abelian $3$-cocycle $\Phi$ on $G$ must be of the form
\begin{equation}\label{eq2.11}
\Phi(g_{1}^{i_{1}}\cdots g_{n}^{i_{n}},g_{1}^{j_{1}}\cdots g_{n}^{j_{n}},g_{1}^{k_{1}}\cdots g_{n}^{k_{n}})
=\prod_{l=1}^n\zeta_l^{c_li_l[\frac{j_l+k_l}{m_l}]}\prod_{1\leq s<t\leq n}\zeta_{m_t}^{c_{st}i_t[\frac{j_s+k_s}{m_s}]}.
\end{equation}
The following lemma is obvious.
\begin{lemma}\label{l2.2}
Let $G$ be a finite cyclic group or a direct product of two finite cyclic groups. Then each normalized $3$-cocycle on $G$ is abelian.
\end{lemma}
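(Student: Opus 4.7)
The plan is to deduce the lemma directly from the explicit classification of $3$-cocycles recalled in \eqref{eq2.10} together with the abelianness criterion from \cite[Proposition 3.14]{HLYY2}. Write $G=\Z_{m_1}\times\cdots\times\Z_{m_n}$. By \cite[Proposition 3.8]{HLYY2}, any normalized $3$-cocycle on $G$ is cohomologous to some $\omega_{\underline{\mathbf{c}}}$ of the form \eqref{eq2.10}, and by \cite[Proposition 3.14]{HLYY2} such an $\omega_{\underline{\mathbf{c}}}$ is abelian precisely when $c_{rst}=0$ for every $1\leq r<s<t\leq n$.

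The key observation is purely combinatorial: in the two cases stipulated by the lemma one has $n=1$ or $n=2$, so there is no triple $(r,s,t)$ of strictly increasing indices in $\{1,\ldots,n\}$. Consequently the set of parameters $\{c_{rst}\}$ appearing in \eqref{eq2.10} is empty, the vanishing condition $c_{rst}=0$ is vacuously satisfied, and every representative $\omega_{\underline{\mathbf{c}}}$ is automatically abelian.

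To pass from the representatives to an arbitrary normalized $3$-cocycle, I would invoke that abelianness is a cohomology invariant. If $\Phi$ and $\Phi'$ differ by a coboundary $\partial(J)$ for some $2$-cochain $J$, then the functor $F_J$ described after \eqref{eq2.8} yields a tensor equivalence $_{\k G}^{\k G}\mathcal{Y}\mathcal{D}^{\Phi'}\simeq {_{\k G}^{\k G}\mathcal{Y}\mathcal{D}^{\Phi}}$. Since any tensor equivalence preserves the property of being pointed, $\Phi'$ is abelian iff $\Phi$ is. Combined with the preceding paragraph, this shows that every normalized $3$-cocycle on $G$ is abelian when $n\leq 2$.

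There is no substantive obstacle here: the lemma is essentially a combinatorial corollary of the cocycle classification, and the only conceptual ingredient beyond that is the twist-invariance of pointedness, which has already been recorded in the excerpt. The proof should therefore be very short, amounting to a citation of \cite[Propositions 3.8 and 3.14]{HLYY2} together with the observation that the index set for the $c_{rst}$ is empty when $n\in\{1,2\}$.
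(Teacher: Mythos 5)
Your proof is correct and takes exactly the route the paper intends: the paper simply declares the lemma obvious after recalling \eqref{eq2.10} and the criterion that $\omega_{\underline{\mathbf{c}}}$ is abelian iff all $c_{rst}=0$, and for $n\le 2$ there are no indices $1\leq r<s<t\leq n$, so every representative is abelian. Your additional appeal to the twist equivalence $F_J$ to see that abelianness (pointedness of ${_{\k G}^{\k G}\mathcal{Y}\mathcal{D}^{\Phi}}$) is a $3$-cohomology invariant correctly fills in the only step the paper leaves implicit.
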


An important observation of \cite{HLYY2} is that each abelian $3$-cocycle on $G$ can be resolved through a bigger abelian group. Suppose $G=\mathbb{Z}_{m_1}\times\cdots\times \mathbb{Z}_{m_n} = \langle g_1 \rangle \times \cdots \times \langle g_n\rangle $. Associated to $G$ there is a finite group $\widehat{G}$ defined by
\begin{equation}\label{eq2.12}
\widehat{G}=\mathbb{Z}_{m_1^2}\times\cdots\times \mathbb{Z}_{m_n^2} = \langle h_1 \rangle \times \cdots \times \langle h_n\rangle.
\end{equation}  
Let 
\begin{equation}\label{eq2.13}
\pi\colon \widehat{G}\to G,\;\;\;\;h_{i}\mapsto g_{i},\;\;\;\;\;1\le i\le n
\end{equation}
be the canonical epimorphism. The following proposition is very important. 

\begin{proposition}\cite[Proposition 3.15]{HLYY2}\label{p2.3}
Suppose that $\Phi$ is an abelian $3$-cocycle on $G.$ Then $\pi^*\Phi$ is a $3$-coboundary on $\widehat{G}$, namely, there is a $2$-cochain $J$ on $\widehat{G}$ such that $\partial J=\pi^*\Phi$.
\end{proposition}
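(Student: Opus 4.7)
The plan is to build the 2-cochain $J$ multiplicatively, exploiting the fact that $\Phi$ in the normal form (2.11) is a product of $n$ elementary diagonal 3-cocycles $\Phi_l(x,y,z) = \zeta_{m_l}^{c_l i_l [(j_l+k_l)/m_l]}$ and $\binom{n}{2}$ off-diagonal 3-cocycles $\Phi_{st}(x,y,z) = \zeta_{m_t}^{c_{st} i_t [(j_s+k_s)/m_s]}$. Since the product of coboundaries is a coboundary, it suffices to exhibit, for each factor separately, a 2-cochain on $\widehat{G}$ whose coboundary equals the corresponding pullback.

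For a diagonal factor I would take $J_l(h_1^{a_1}\cdots h_n^{a_n}, h_1^{b_1}\cdots h_n^{b_n}) := \zeta_{m_l}^{c_l a_l [b_l/m_l]}$ with $a_l, b_l \in \{0, 1, \ldots, m_l^2 - 1\}$. Expanding $\partial J_l$ and applying the elementary identity $[(b_l+c_l)/m_l] - [b_l/m_l] - [c_l/m_l] = [(\bar b_l + \bar c_l)/m_l]$ (bars denoting reduction modulo $m_l$), a short calculation gives $\partial J_l = \pi^*\Phi_l$; the extra terms coming from reduction modulo $m_l^2$ inside $\widehat{G}$ are divisible by $m_l^2$ and so vanish in the $\zeta_{m_l}$-exponent. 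The main obstacle is the off-diagonal case. The natural analogue $J_{st}(x, y) := \zeta_{m_t}^{c_{st} a_t [b_s/m_s]}$ satisfies $\partial J_{st} = \pi^*\Phi_{st} \cdot \Psi^{-1}$, where the residual \emph{carry correction} $\Psi(x,y,z) = \zeta_{m_t}^{c_{st} a_t m_s [(b_s+c_s)/m_s^2]}$ records the mismatch between addition modulo $m_s^2$ in $\widehat{G}$ and addition modulo $m_s$ in $G$. I would then recognise $\Psi$ as itself a 3-cocycle on $\widehat{G}$ of off-diagonal type (2.10), with coefficient $c_{st} m_s m_t$. Writing $d = (m_s, m_t)$, $m_s = d\alpha$, $m_t = d\beta$ with $(\alpha,\beta) = 1$, we have $c_{st} m_s m_t = c_{st} d^2 \alpha\beta \equiv 0 \pmod{d^2}$ while $(m_s^2, m_t^2) = d^2$, so $\Psi$ represents the trivial class in the abelian part of $H^3(\widehat{G}, \k^*)$. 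Hence $\Psi = \partial K_{st}$ for an explicit 2-cochain $K_{st}$, obtained by a second application of the same carry-cancellation trick inside $\widehat{G}$ itself.

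Assembling $J := \prod_l J_l \cdot \prod_{s<t}(J_{st} \cdot K_{st})$ then yields $\partial J = \pi^*\Phi$, as required. The conceptual heart of the proof is the observation that the bounds $c_l < m_l$ and $c_{st} < (m_s, m_t)$ force the pulled-back cocycle into the image of ``multiplication by $m_l^2$'' on the diagonal part (respectively ``multiplication by $m_s m_t$'' on the off-diagonal part) of $H^3(\widehat{G}, \k^*)$, both of which vanish; the explicit 2-cochains $J_l$ and $J_{st}\cdot K_{st}$ make this vanishing constructive and independent of any cohomological abstraction.
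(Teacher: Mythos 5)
Your overall strategy is sound, and since the paper itself offers no argument for this proposition (it is quoted verbatim from [HLYY2, Proposition 3.15]), your constructive, factor-by-factor proof is a legitimate self-contained substitute in the same spirit as the cited source: reduce to the normal form \eqref{eq2.11} (the discrepancy $\partial\mu$ on $G$ pulls back to the coboundary $\partial(\pi^*\mu)$ on $\widehat{G}$, so this reduction is harmless and should be said explicitly), then kill each elementary factor by an explicit $2$-cochain. Your diagonal computation is correct: for $J_l(\cdot,\cdot)=\zeta_{m_l}^{c_l a_l[b_l/m_l]}$ the residual carry term $-c_l a_l m_l[\tfrac{b_l+d_l}{m_l^2}]$ is divisible by $m_l$ and dies in the $\zeta_{m_l}$-exponent, so $\partial J_l=\pi^*\Phi_l$ on the nose. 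Your off-diagonal computation is also correct: $\partial J_{st}=\pi^*\Phi_{st}\cdot\Psi^{-1}$ with $\Psi=\zeta_{m_t}^{c_{st}a_tm_s[\frac{b_s+d_s}{m_s^2}]}=\zeta_{m_t^2}^{c_{st}m_sm_ta_t[\frac{b_s+d_s}{m_s^2}]}$, i.e.\ an off-diagonal cocycle on $\widehat{G}$ of type \eqref{eq2.10} with coefficient $c_{st}m_sm_t$, which is indeed divisible by $(m_s^2,m_t^2)=(m_s,m_t)^2$.

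The one soft spot is the last step. First, concluding that $\Psi$ is a coboundary from the divisibility of its coefficient uses more than the completeness of the representative list in [HLYY2, Proposition 3.8]: you need that the class of the off-diagonal carry cocycle depends only on its coefficient modulo $(m_s^2,m_t^2)$, i.e.\ that the coefficient-$(m_s^2,m_t^2)$ cocycle is itself a coboundary; this is true but must be argued. Second, your proposed construction of $K_{st}$ by ``a second application of the same carry-cancellation trick'' does not work as stated: a carry-type cochain on $\widehat{G}$ with respect to $m_s$ reproduces the mod-$m_s$ carry you have already used, not the mod-$m_s^2$ carry appearing in $\Psi$. The correct explicit cochain is bilinear, not carry type: take $K_{st}(\hat x,\hat y)=\zeta_{m_t^2}^{e\,a_tb_s}$, for which a direct computation gives $\partial K_{st}(\hat x,\hat y,\hat z)=\zeta_{m_t^2}^{-e\,m_s^2\,a_t[\frac{b_s+d_s}{m_s^2}]}$ up to terms divisible by $m_t^2$. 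Choosing $e$ with $e\,m_s^2\equiv -c_{st}m_sm_t \pmod{m_t^2}$ — solvable precisely because $(m_s^2,m_t^2)=(m_s,m_t)^2$ divides $c_{st}m_sm_t$ — yields $\partial K_{st}=\Psi$, and this single computation also supplies the missing cohomological fact, so no appeal to the classification of $\operatorname{H}^3(\widehat{G},\k^*)$ is needed. With that replacement, $J=\prod_l J_l\cdot\prod_{s<t}J_{st}K_{st}$ satisfies $\partial J=\pi^*\Phi$ and your proof is complete.
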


If $\Phi$ is a nonabelian $3$-cocycle on $G,$ then it can not be resolved by any finite abelian group.
 
\begin{proposition}\cite[Proposition 3.17]{HLYY2}\label{p2.4}
Let $H$ a finite abelian group with a group epimorphism $\pi:H\to G$. If $\Phi$ is a nonabelian $3$-cocycle on $G$, then $\pi^*\Phi$ is not a $3$-coboundary on $H$.
\end{proposition}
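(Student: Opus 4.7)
The plan is to construct a cohomological invariant that isolates the ``nonabelian'' part of a $3$-cocycle on an abelian group, and then use its invariance under coboundaries to rule out $\pi^{*}\Phi$ being one.

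Since being a $3$-coboundary depends only on the cohomology class and $\pi^{*}$ descends to cohomology, I may assume $\Phi=\omega_{\underline{\mathbf{c}}}$ is in the normal form \eqref{eq2.10} with some $c_{rst}\neq 0$ for a fixed triple $r<s<t$. The key gadget is the alternating invariant
$$\eta(\omega;x,y,z):=\prod_{\sigma\in S_{3}}\omega\bigl(x_{\sigma(1)},x_{\sigma(2)},x_{\sigma(3)}\bigr)^{\operatorname{sgn}(\sigma)},$$
defined for any normalized $3$-cocycle $\omega$ and any pairwise commuting triple $(x_{1},x_{2},x_{3})=(x,y,z)$. A direct unraveling of the multiplicative coboundary formula $(\partial\beta)(u,v,w)=\beta(v,w)\beta(uv,w)^{-1}\beta(u,vw)\beta(u,v)^{-1}$, combined with the identifications $xy=yx$, $yz=zy$, $xz=zx$, shows that $\eta(\partial\beta;x,y,z)=1$. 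Hence $\eta(-;x,y,z)$ descends to a well-defined map on $\operatorname{H}^{3}(-,\k^{*})$ evaluated at commuting triples.

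Next I compute $\eta(\Phi;g_{r},g_{s},g_{t})$ directly from \eqref{eq2.10}. The ``diagonal'' summands indexed by $c_{l}$ and the ``two-index'' summands indexed by $c_{ij}$ contribute trivially, because the floor functions $\bigl[\tfrac{j+k}{m}\bigr]$ vanish whenever the three arguments are distinct generators lying in distinct cyclic factors. Among the genuinely three-dimensional summands $c_{r's't'}$ with $r'<s'<t'$, a nontrivial contribution to $\omega_{\underline{\mathbf{c}}}(g_{\sigma(r)},g_{\sigma(s)},g_{\sigma(t)})$ forces $(r',s',t')=(\sigma(r),\sigma(s),\sigma(t))$; combined with the orderings $r'<s'<t'$ and $r<s<t$, only $\sigma=\operatorname{id}$ survives. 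Thus
$$\eta(\Phi;g_{r},g_{s},g_{t})=\omega_{\underline{\mathbf{c}}}(g_{r},g_{s},g_{t})=\zeta_{(m_{r},m_{s},m_{t})}^{c_{rst}}\neq 1,$$
because $0<c_{rst}<(m_{r},m_{s},m_{t})$.

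To conclude, I choose lifts $h_{r},h_{s},h_{t}\in H$ with $\pi(h_{i})=g_{i}$; these exist by surjectivity of $\pi$ and commute automatically because $H$ is abelian. The naturality identity $\eta(\pi^{*}\Phi;h_{r},h_{s},h_{t})=\eta(\Phi;g_{r},g_{s},g_{t})\neq 1$ then prevents $\pi^{*}\Phi$ from being a coboundary, since any coboundary would produce $\eta=1$ on a commuting triple. The main technical point is the bookkeeping in the two computations: verifying $\eta(\partial\beta)=1$ on commuting triples (after expanding $\partial\beta$ one must match twelve $\beta$-factors on each side using commutativity), and the case analysis on \eqref{eq2.10} where the ordering convention $r'<s'<t'$ is precisely what kills the five non-identity orderings of $(g_{r},g_{s},g_{t})$.
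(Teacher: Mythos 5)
Your argument is correct, and it is self-contained. Note that the paper itself gives no proof of this statement (it is quoted from [HLYY2, Proposition 3.17]), so there is nothing internal to compare against; but your key invariant is in fact an object the paper already works with: for a commuting triple,
$\eta(\Phi;g,x,y)=\prod_{\sigma\in S_3}\Phi^{\operatorname{sgn}(\sigma)}$ coincides with the ratio $\Phi_g(x,y)/\Phi_g(y,x)$ built from \eqref{e2.3} (compare Lemma \ref{l5.1}), and the fact that this ratio is killed by coboundaries and detects $c_{rst}$ on the normal form \eqref{eq2.10} is exactly the mechanism underlying the cited source. Your three computations check out: the twelve-factor cancellation giving $\eta(\partial\beta;x,y,z)=1$ uses only that the pairwise products $xy=yx$, $yz=zy$, $xz=zx$ agree, which holds in the abelian group $H$; on the normal form the floor terms $[\frac{j_l+k_l}{m_l}]$ vanish for three distinct generators, and the ordering convention $r'<s'<t'$ indeed leaves only the identity permutation, giving $\eta(\Phi;g_r,g_s,g_t)=\zeta_{(m_r,m_s,m_t)}^{c_{rst}}\neq 1$; and naturality under $\pi^*$ plus abelianness of $H$ finishes the contradiction. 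The only step you pass over quickly is the reduction ``nonabelian $\Rightarrow$ cohomologous to some $\omega_{\underline{\mathbf{c}}}$ with $c_{rst}\neq 0$'': this needs that abelianness (pointedness of ${_{\k G}^{\k G}\mathcal{YD}^\Phi}$) is a cohomology invariant, which follows from the twist equivalence $F_J$ described in Subsection 2.1 (it preserves underlying objects, hence dimensions of simples) together with [HLYY2, Proposition 3.14]; also note that replacing $\Phi$ by $\Phi\ast\partial\beta$ replaces $\pi^*\Phi$ by $\pi^*\Phi\ast\partial(\pi^*\beta)$, so the coboundary question is unchanged. With that sentence added, the proof is complete.
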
 

The braided structure of a twisted Yetter-Drinfeld module in $ {_{\k G}^{\k G}\mathcal{Y}\mathcal{D}^{\Phi}}$ is also related to $\Phi$.
Let $V\in  {_{\k G}^{\k G}\mathcal{Y}\mathcal{D}^{\Phi}}$ with braiding $\mathcal{R}$. If there is a $G$-homogenous basis $\{X_1,X_2,\ldots, X_n\}$ of $V$ and $q_{ij}\in \k^*$ such that 
\begin{equation}\label{e2.15}
\mathcal{R}(X_i\otimes X_j)=q_{ij}X_j\otimes X_i, \ \ \forall \ 1\leq i\leq j\leq n,
\end{equation}
then we say that $V$ is of diagonal type and  such basis is called a diagonal basis. Note that if $\Phi$ is an abelian $3$-cocycle on $G$, then every object in ${_{\k G}^{\k G}\mathcal{Y}\mathcal{D}^{\Phi}}$ is of diagonal type. The following lemma follows from \cite[Lemma 4.1]{HLYY2} immediately.
\begin{lemma}\label{l2.4}
Let $V\in  {_{\k G}^{\k G}\mathcal{Y}\mathcal{D}^{\Phi}}$ and let $G_V$ be the support group of $V$. Then $V$ is of diagonal type if and only if $\Phi_{G_V}$, the restriction of $\Phi$ to $G_V$, is an abelian $3$-cocycle on $G_V$.
\end{lemma}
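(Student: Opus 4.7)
The plan is to localize everything to the support subgroup. I would first observe that, since every $G$-homogeneous element of $V$ has its $G$-degree inside $G_V$ and the braiding \eqref{eq2.7} depends only on the action of such degrees, $V$ is naturally an object of ${_{\k G_V}^{\k G_V}\mathcal{Y}\mathcal{D}^{\Phi_{G_V}}}$ with the \emph{same} underlying braided vector space. Hence $V$ is of diagonal type in ${_{\k G}^{\k G}\mathcal{Y}\mathcal{D}^{\Phi}}$ if and only if it is of diagonal type in ${_{\k G_V}^{\k G_V}\mathcal{Y}\mathcal{D}^{\Phi_{G_V}}}$, so we may assume $G=G_V$ throughout.

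For the sufficiency direction, if $\Phi_{G_V}$ is an abelian $3$-cocycle, then every simple object of ${_{\k G_V}^{\k G_V}\mathcal{Y}\mathcal{D}^{\Phi_{G_V}}}$ is $1$-dimensional, and by the remark following \eqref{e2.15} every object of this category is of diagonal type; in particular so is $V$. For the necessity direction, I would write $V=V_1\oplus\cdots\oplus V_\theta$ with $\Deg(V_i)=g_i$, so that $G_V=\langle g_1,\ldots,g_\theta\rangle$. Given a diagonal basis $\{X_1,\ldots,X_n\}$, the relation $\mathcal{R}(X_i\otimes X_j)=q_{ij}X_j\otimes X_i$ becomes, via \eqref{eq2.7}, $g_i\triangleright X_j=q_{ij}X_j$; combining with the projective-action identity \eqref{eq2.4} shows that each $X_j$ is in fact a projective eigenvector for every element of $G_V$. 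Hence each simple summand $V_i$ decomposes in ${_{\k G_V}^{\k G_V}\mathcal{Y}\mathcal{D}^{\Phi_{G_V}}}$ into $1$-dimensional simple subobjects, and the projective $(G_V,(\Phi_{G_V})_{g_i})$-representation on $V_{g_i}$ splits into $1$-dimensional pieces.

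The remaining step is to translate ``each $(\Phi_{G_V})_{g_i}$ admits a $1$-dimensional projective representation of $G_V$'' into ``$\Phi_{G_V}$ is an abelian $3$-cocycle'', and this is exactly the content of \cite[Lemma 4.1]{HLYY2}. The main obstacle of the argument is therefore this cohomological translation, which amounts to showing that triviality of the classes $[(\Phi_{G_V})_{g_i}]\in H^2(G_V,\k^*)$ on a generating set propagates to all $g\in G_V$, and that this global triviality is equivalent to $\Phi_{G_V}$ being abelian. Once that black box is invoked, the lemma follows at once.
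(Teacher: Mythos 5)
Your argument is correct, but it is worth noting that the paper itself offers no argument at all for this lemma: it simply declares that the statement ``follows from \cite[Lemma 4.1]{HLYY2} immediately''. What you have done is reconstruct the reduction that such a citation hides. Your sufficiency direction (abelian $\Rightarrow$ pointed $\Rightarrow$ diagonal) matches the remark preceding the lemma, though it silently uses semisimplicity of ${_{\k G_V}^{\k G_V}\mathcal{YD}^{\Phi_{G_V}}}$ to split $V$ into $1$-dimensional simples; this is standard and used freely elsewhere in the paper, but you should say it. Your necessity direction is the more substantive contribution: from a diagonal basis you correctly deduce that every element of $G_V$ acts by scalars on each basis vector, hence via \eqref{eq2.4} each class $[(\Phi_{G_V})_{g_i}]\in H^2(G_V,\k^*)$ is trivial (a one-dimensional projective character $\chi$ with $\chi(e)\chi(f)=\Phi_{g_i}(e,f)\chi(ef)$ exhibits the cocycle as a coboundary). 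Two small imprecisions: the diagonal basis need not respect the decomposition into simple summands when two summands share a $G$-degree, so you should speak of the homogeneous components $V_{g_i}$ splitting rather than the summands $V_i$; and the black box you invoke at the end is better closed inside the paper itself, since coboundaries over an abelian group are symmetric and Lemma \ref{l5.2} (i.e.\ \cite[Lemma 4.5]{HYZ}), together with the symmetry relation of Lemma \ref{l5.1}, yields exactly the propagation from the generating set $\{g_1,\dots,g_\theta\}$ of $G_V$ to abelianness of $\Phi_{G_V}$, without having to guess the precise content of \cite[Lemma 4.1]{HLYY2}. With those adjustments your proof is complete and, unlike the paper's, self-contained up to results already quoted in the text.
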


\subsection{Nichols algebras and pre-Nichols algebras}
Nichols algebras and pre-Nichols algebras can be defined for general braided vector spaces, see \cite{HS24}. For the purpose of this paper, we only introduce these definitions for objects in the category $^{\k G}_{\k G}\mathcal{Y}\mathcal{D}^\Phi$.

Let $V$ be a finite-dimensional object in $^{\k G}_{\k G}\mathcal{Y}\mathcal{D}^\Phi.$ By $T(V)$ we denote the tensor algebra in $^{\k G}_{ \k G}\mathcal{Y}\mathcal{D}^\Phi$ generated freely by $V.$ As linear space $T(V)$ is isomorphic to $\bigoplus_{n \geq 0}V^{\otimes \overrightarrow{n}}$, where $V^{\otimes \overrightarrow{n}}$ means
$\underbrace{(\cdots((}_{n-1}V\otimes V)\otimes V)\cdots \otimes V).$ This induces a natural $\N_0$-graded structure on $T(V).$ Furthermore, $T(V)$ is an $\N_0$-graded braided Hopf algebra in $^{\k G}_{ \k G}\mathcal{Y}\mathcal{D}^\Phi$ by declaring that every nonzero element in $V$ is primitive.

A pre-Nichols algebra of $V$ is a braided Hopf algebra $\mathcal{P}(V)$ obtained as the quotient of $T(V)$ by an $\N_0$-homogenous Hopf ideal $I\subset \bigoplus_{n \geq 2}V^{\otimes \overrightarrow{n}}$. Clearly, $\mathcal{P}(V)$ is a connected $\N_0$-graded braided Hopf algebra generated by $V$. Let $I(V)$ be the maximal $\N_0$-homogenous Hopf ideal contained in $\bigoplus_{n \geq 2}V^{\otimes \overrightarrow{n}}$. The Nichols algebra $B(V)$ of $V$ is defined as the quotient Hopf algebra $T(V)/I(V)$. Thus $B(V)$ is a particular pre-Nichols algebra, and there exist braided Hopf algebra epimorphisms
$$T(V)\twoheadrightarrow \mathcal{P}(V)\twoheadrightarrow B(V),$$
Where the restrictions of both epimorphisms to degree one are $\id_V$.

Let $V=V_1\oplus \cdots \oplus V_\theta\in {^{\k G}_{\k G}\mathcal{YD}^\Phi}$ be a direct sum of $\theta$ simple objects. Any pre-Nichols algebra $\mathcal{P}(V)$ of $V$ is said to be of rank $\theta$, and we write $\Rank(\mathcal{P}(V))=\theta$.
Let $g_i=g_{V_i}$ for $1\leq i\leq \theta$. If the set $\{g_1,\ldots, g_\theta\}$ freely generates $G$, i.e., $G\cong \langle g_1\rangle \times \cdots \times \langle g_\theta\rangle$, then $V$ and the associated pre-Nichols algebras are called of {\bf standard form}.

For each braided Hopf algebra $H$ in $^{\k G}_{\k G}\mathcal{Y}\mathcal{D}^\Phi$ and a $2$-cochain $J$ on $G$, one can define a new braided Hopf algebra  $H^J$ in $^{\k G}_{\k G}\mathcal{Y}\mathcal{D}^{\Phi\ast \partial J}$ whose multiplication $\circ$ is given by
\begin{equation}\label{eq2.14}
X\circ Y=J(x,y)XY
\end{equation} for all homogenous elements $X,Y\in H,$ where $x=g_X$ and $y=g_Y$ denote $G$-degrees of $X$ and $Y$ respectively.  In particular, let $(V,\triangleright, \rho)\in {^{\k G}_{\k G}\mathcal{YD}^\Phi}$ and let $\mathcal{P}(V)$ be a pre-Nichols algebra of $V$. Then 
  $\mathcal{P}(V)^J$ is also an $\N_0$-graded braided Hopf algebra in $^{\k G}_{\k G}\mathcal{Y}\mathcal{D}^{\Phi\ast \partial J}$, generated by $V^J=(V,\triangleright_J, \rho)$.  This leads to the following lemma.
 
 \begin{lemma}\label{l2.5}
Let $\mathcal{P}(V)$ be a pre-Nichols algebra of $V\in {^{\k G}_{\k G}\mathcal{YD}^\Phi}$ and let $J$ be a $2$-cochain on $G$. Then $\mathcal{P}(V)^J$ is a pre-Nichols algebra of $V^J\in {^{\k G}_{\k G}\mathcal{Y}\mathcal{D}^{\Phi\ast \partial J}}$.
\end{lemma}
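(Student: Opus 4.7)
The plan is to directly verify that $\mathcal{P}(V)^J$ satisfies the three defining properties of a pre-Nichols algebra of $V^J$: (i) it is an $\N_0$-graded connected braided Hopf algebra in $^{\k G}_{\k G}\mathcal{Y}\mathcal{D}^{\Phi\ast \partial J}$, (ii) its degree-one component is $V^J$, and (iii) it is generated by this degree-one part. Property (i) will follow from the general twist construction: the formula $X \circ Y = J(x,y)XY$ in \eqref{eq2.14} is known to yield a braided Hopf algebra in the twisted category, with comultiplication, unit, and counit agreeing with those of $\mathcal{P}(V)$ on the underlying vector space. The only structural point to verify is the associativity; a direct computation shows
\begin{equation*}
(X\circ Y)\circ Z \;=\; J(xy,z)J(x,y)\,XYZ, \qquad X\circ(Y\circ Z) \;=\; J(x,yz)J(y,z)\,XYZ,
\end{equation*}
whose ratio $\partial J(x,y,z)$ precisely accounts for the shift of the associator from $\Phi$ to $\Phi\ast \partial J$.

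For the grading, the key observation is that $J$ only rescales the product by nonzero scalars indexed by $G$-degrees and does not alter $\N_0$-degrees, so the decomposition $\mathcal{P}(V)=\bigoplus_{n\geq 0}\mathcal{P}(V)_n$ remains a grading for $\circ$. In particular $\mathcal{P}(V)^J_0=\k$ (connectedness) and $\mathcal{P}(V)^J_1=V$ as a vector space; via the tensor equivalence $(F_J,\varphi_0,\varphi_2)$ described above, this degree-one piece is precisely $V^J$ as an object of $^{\k G}_{\k G}\mathcal{Y}\mathcal{D}^{\Phi\ast\partial J}$, giving (ii). For (iii), since $J$ takes values in $\k^{*}$, an easy induction produces a nonzero scalar $\lambda(x_1,\ldots,x_n)\in\k^{*}$ such that
\begin{equation*}
X_1\circ X_2 \circ \cdots \circ X_n \;=\; \lambda(x_1,\ldots,x_n)\,X_1 X_2\cdots X_n
\end{equation*}
for all homogeneous $X_i\in V$ with $x_i=\Deg(X_i)$. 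Hence the $\circ$-subalgebra of $\mathcal{P}(V)^J$ generated by $V^J$ has the same homogeneous components as the subalgebra of $\mathcal{P}(V)$ generated by $V$ under the original product, which is all of $\mathcal{P}(V)$.

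It remains to check that $V^J$ sits inside the primitives of $\mathcal{P}(V)^J$, so that the Hopf structure is indeed determined by generation in degree one. This is immediate: the comultiplication on $\mathcal{P}(V)^J$ coincides, on the underlying vector space, with that of $\mathcal{P}(V)$, and on $V$ it already has the primitive form $\Delta(v)=v\otimes 1+1\otimes v$. I do not anticipate a substantial obstacle here; the main care needed is bookkeeping through the tensor equivalence $F_J$, in particular confirming via the cocycle identity $\partial J(x,y,z)=J(xy,z)J(x,y)J(x,yz)^{-1}J(y,z)^{-1}$ that the twisted multiplication, together with the untwisted comultiplication, really defines a braided bialgebra in $^{\k G}_{\k G}\mathcal{Y}\mathcal{D}^{\Phi\ast\partial J}$ — a standard computation that underlies the whole twist formalism.
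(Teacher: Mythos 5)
Your proposal is correct and follows essentially the same route as the paper: the paper treats Lemma \ref{l2.5} as an immediate consequence of the twist construction \eqref{eq2.14} and the tensor equivalence $(F_J,\varphi_0,\varphi_2)$, noting that $\mathcal{P}(V)^J$ is an $\N_0$-graded braided Hopf algebra in $^{\k G}_{\k G}\mathcal{Y}\mathcal{D}^{\Phi\ast\partial J}$ generated by $V^J$. You merely spell out the same verification (associativity shift by $\partial J$, preservation of the $\N_0$-grading and connectedness, generation in degree one, primitivity of $V^J$) in more detail, which is fine.
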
 

Adopting the same terminology as in the theory of coquasi-Hopf algebras, we say that $\mathcal{P}(V)$ and $\mathcal{P}(V)^{J}$ are twist equivalent, or $\mathcal{P}(V)^J$ is a twisting of $\mathcal{P}(V)$. A special case of Lemma \ref{l2.5} was given in \cite{HLYY2}. 
\begin{lemma}\cite[Lemma 2.12]{HLYY2}\label{l2.6}
The twisting $B(V)^J$ of $B(V)$ is a Nichols algebra in $^{\k G}_{\k G}\mathcal{Y}\mathcal{D}^{\Phi\ast \partial J}$ and $B(V)^J\cong B(V^J)$.
\end{lemma}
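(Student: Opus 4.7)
The plan is to recognize $B(V)^{J}$ as a pre-Nichols algebra of $V^{J}$ whose primitives coincide with $V^{J}$, and then to conclude by the universal characterization of Nichols algebras. The key observation is that the twisting operation \eqref{eq2.14} modifies only the multiplication of a braided Hopf algebra; its comultiplication and the coaction of $\k G$ are untouched, and the underlying graded vector space is preserved.

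First, I would apply Lemma \ref{l2.5} to conclude that $B(V)^{J}$ is a pre-Nichols algebra of $V^{J}$ in ${^{\k G}_{\k G}\mathcal{Y}\mathcal{D}^{\Phi\ast\partial J}}$. This automatically produces a canonical surjection of graded braided Hopf algebras $B(V)^{J}\twoheadrightarrow B(V^{J})$, so only its injectivity remains to be established. Next, I would invoke the universal characterization of Nichols algebras: a pre-Nichols algebra $\mathcal{P}(W)$ equals $B(W)$ if and only if the space of primitive elements $P(\mathcal{P}(W))$ coincides with $W$. Since the comultiplication on $B(V)^{J}$ agrees, by construction, with that of $B(V)$, the subspace of primitive elements is unchanged by the twist, so $P(B(V)^{J})=P(B(V))=V$. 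Because $V$ and $V^{J}$ coincide as $G$-graded vector spaces (only the $G$-action is modified via \eqref{eq2.8}), this reads $P(B(V)^{J})=V^{J}$, which is precisely the property characterizing the Nichols algebra. Hence the surjection $B(V)^{J}\twoheadrightarrow B(V^{J})$ is an isomorphism.

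The main obstacle is bookkeeping rather than idea: the associator of ${^{\k G}_{\k G}\mathcal{Y}\mathcal{D}^{\Phi}}$ is nontrivial, the braided Hopf algebra axioms in this category involve $\Phi$, and one has to verify carefully that the new multiplication $X\circ Y=J(x,y)XY$ satisfies the (quasi-)associativity axiom with the new associator $\Phi\ast\partial J$, and that the notion of primitive element transported under the twist really agrees with the intrinsic notion of primitive in the new category. A clean way to package both points at once is to exhibit the twist as a braided tensor equivalence $F_{J}\colon {^{\k G}_{\k G}\mathcal{Y}\mathcal{D}^{\Phi}}\to {^{\k G}_{\k G}\mathcal{Y}\mathcal{D}^{\Phi\ast\partial J}}$ sending $B(V)$ to $B(V)^{J}$; since any braided tensor equivalence preserves universal constructions, it sends Nichols algebras to Nichols algebras, giving $F_{J}(B(V))\cong B(F_{J}(V))=B(V^{J})$ immediately.
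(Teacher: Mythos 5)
Your argument is sound, but note that the paper does not actually prove this statement: it is imported verbatim from \cite[Lemma 2.12]{HLYY2}, so there is no internal proof to compare against. On its own merits, your route is the standard one and works: Lemma \ref{l2.5} makes $B(V)^J$ a pre-Nichols algebra of $V^J$, and since $B(V)$ has no primitives in degree $\geq 2$, neither does $B(V)^J$, so the canonical surjection $B(V)^J\twoheadrightarrow B(V^J)$ is injective on primitives and hence (by the usual lemma that a morphism of connected $\N_0$-graded braided Hopf algebras injective on primitives is injective) an isomorphism; you should invoke that lemma explicitly rather than leave it implicit in the phrase ``universal characterization.'' Two small corrections. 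First, it is not quite right that the comultiplication is ``untouched'': if $B(V)^J$ is taken to be the image of $B(V)$ under the monoidal functor $(F_J,\varphi_0,\varphi_2)$, the coproduct gets rescaled by $J(x_1,x_2)^{-1}$ on $G\times G$-homogeneous components; since $J$ is normalized this does not change which elements are primitive, so your conclusion $P(B(V)^J)=V^J$ survives, but the justification should be phrased that way. Second, your cleaner packaging via $F_J$ requires checking that $(F_J,\varphi_2)$ is a \emph{braided} equivalence (a one-line computation with the braiding \eqref{eq2.7} and the twisted action \eqref{eq2.8}) and that the Nichols algebra construction, e.g.\ as $T(V)/I(V)$ with $I(V)$ the maximal graded Hopf ideal in degrees $\geq 2$, is preserved by such an equivalence; with those checks supplied, that version is essentially the argument one would expect in the cited source.
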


\subsection{Dynkin diagram and root system}
Arithmetic root systems are invariants for Nichols algebras of diagonal type with certain finiteness property. A complete classification of arithmetic root systems, established by Heckenberger \cite{H4}, plays a crucial role in the classification program of pointed Hopf algebras with finitely many PBW generators.

Suppose $B(V)$ is a Nichols algebra of diagonal type in $^{\k G}_{\k G}\mathcal{Y}\mathcal{D}^{\Phi}.$ Fix a diagonal basis $\{X_i|1\leq i\leq n\}$ of $V$. As shown in \cite{HLYY2}, $B(V)$ has a $\Z^n$-graded braided Hopf algebra structure with $\deg(X_i)=e_i, 1\le i\leq n$, where $\{e_i|1\leq i\leq n\}$ is a canonical basis of $\Z^n.$ Moreover, $B(V)$ admits a PBW basis whose generators are homogenous with respect to this $\Z^n$-grading. Following \cite{H0}, we denote by $\triangle^{+}(B(V))$ the set of $\Z^n$-degrees of these PBW generators, counted with multiplicities, and define the root system of $B(V)$ as $$\bigtriangleup(B(V)):= \bigtriangleup^{+}(B(V))\bigcup -\bigtriangleup^{+}(B(V)).$$ When $\bigtriangleup(B(V))$ is finite,  it coincides with an arithmetic root system.  

To each Nichols algebra of diagonal type one can also associated a diagram. 
Let $V$ be a diagonal object in ${^{\k G}_{\k G}\mathcal{Y}\mathcal{D}^{\Phi}}$ with a diagonal basis $\{X_i|1\leq i\leq n\}$ and braiding $\mathcal{R}$.
The structure constants of $B(V)$ are $\{q_{ij}|1\leq i,j\leq n\}$ satisfying \eqref{e2.15}. The generalized Dynkin diagram $\mathcal{D}(V)$ is defined as an undirected graph as follows:
\begin{itemize}
\item[1)] There is a bijecton $\phi$ from the index set $I=\{ 1, 2, \dots, n \}$ to the set of vertices of $\mathcal{D}(V)$.
\item[2)]  The vertex $\phi(i)$ is labelled  by $q_{ii}$ for all $1\leq i\leq n$.
\item[3)] For $1\leq i,j\leq n,$ the number  $n_{ij}$ of edges between $\phi(i)$ and $\phi(j)$ is either $0$ or $1.$ If $i=j$ or $q_{ij}q_{ji}=1$, then $n_{ij}=0$. Otherwise $n_{ij}=1$ and the unique edge is labelled by $\widetilde{q_{ij}}=q_{ij}q_{ji}$ for all 
$1\leq i<j\leq n.$
\end{itemize}

A Nichols algebra of diagonal type always admits a generalized Dynkin diagram, and this diagram is independent of the choice of diagonal basis for $V$.

\begin{definition}
Let $V\in  {_{\k G}^{\k G}\mathcal{YD}^\Phi}$ be a twisted Yetter-Drinfeld module of diagonal type. The generalized Dynkin diagram $\mathcal{D}(V)$ is said to be of finite type if $\bigtriangleup(B(V))$ is finite.
\end{definition}

Let $J$ be a $2$-cochain on $G$ and let $V^J$ be the twisting of $V$ by $J$. Denote by $\mathcal{R}_J$ the braiding of $V^J$ and by $\{\overline{q_{ij}}|1\leq i,j\leq n\}$ be the corresponding structure constants. Using \eqref{eq2.8} we obtain
$$\mathcal{R}_J(X_i\otimes X_j)=g_i\triangleright_J X_j\otimes X_i=\frac{J(g_i, g_j)}{J(g_j,g_i)}g_i\triangleright X_j\otimes X_i=\frac{J(g_i, g_j)}{J(g_j,g_i)}q_{ij}X_j\otimes X_i,$$
where $g_i$ is the $G$-degree of $X_i$ for $1\leq i\leq n$. Hence $\overline{q_{ij}}=\frac{J(g_i, g_j)}{J(g_j,g_i)}q_{ij}$, which yields $$\overline{q_{ii}}=q_{ii},\ \  \overline{q_{ij}}\overline{q_{ji}}=q_{ij}q_{ji},\ \ \forall \ 1\leq i,j \leq n.$$ 
Consequently, $\mathcal{D}(V)$ and $\mathcal{D}(V^J)$ are the same generalized Dynkin diagram. Moreover, by \eqref{eq2.14} the twisting preserves the $Z^n$-degree of every $Z^n$-homogenous element of $B(V)$. Therefore we obtain the following proposition. 

\begin{proposition}\label{p2.10}
Let $B(V)\in {_{\k G}^{\k G}\mathcal{YD}^\Phi}$ be a Nichols algebra of diagonal type and let $J$ be a $2$-cochain on $G$. Then $B(V)$ and its twisting $B(V^J)$ share the same root system and the same generalized Dynkin diagram.
\end{proposition}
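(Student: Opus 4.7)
The plan is to assemble the statement directly from the observations the author has already set up in the paragraph preceding the proposition, so almost no new computation should be required.

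First I would handle the generalized Dynkin diagram equality. The data underlying $\mathcal{D}(V)$ consists of the vertex labels $q_{ii}$ together with the edge labels $\widetilde{q_{ij}} = q_{ij}q_{ji}$ (with an edge drawn precisely when $q_{ij}q_{ji}\neq 1$). From the braiding computation for $V^{J}$ one reads off $\overline{q_{ij}} = \frac{J(g_i,g_j)}{J(g_j,g_i)} q_{ij}$, whence $\overline{q_{ii}} = q_{ii}$ and $\overline{q_{ij}}\,\overline{q_{ji}} = q_{ij}q_{ji}$. Both the vertex labels and the edge labels are therefore preserved, so $\mathcal{D}(V) = \mathcal{D}(V^{J})$.

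Next I would handle the root system equality. By Lemma \ref{l2.6}, $B(V)^{J} \cong B(V^{J})$ as braided Hopf algebras in $^{\k G}_{\k G}\mathcal{Y}\mathcal{D}^{\Phi \ast \partial J}$. It therefore suffices to show that passing from $B(V)$ to $B(V)^{J}$ preserves the $\mathbb{Z}^{n}$-grading together with the set of $\mathbb{Z}^{n}$-degrees of the PBW generators, counted with multiplicity. Fix the diagonal basis $\{X_1,\ldots,X_n\}$ of $V$ and the induced $\mathbb{Z}^{n}$-grading on $B(V)$ with $\deg(X_i)=e_i$. By the formula (2.14), the twisted product $X\circ Y = J(x,y)XY$ differs from the original product only by a scalar depending on the $G$-degrees $x,y$, and in particular $X\circ Y$ is a scalar multiple of $XY$. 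Consequently, $B(V)^{J}$ inherits the same underlying vector space decomposition as $B(V)$, and the $\mathbb{Z}^{n}$-grading is preserved. Any $\mathbb{Z}^{n}$-homogeneous PBW basis of $B(V)$ is then, up to nonzero scalars, also a PBW basis of $B(V)^{J}$ with identical $\mathbb{Z}^{n}$-degrees for the generators. Hence $\triangle^{+}(B(V)) = \triangle^{+}(B(V^{J}))$, and the root systems coincide.

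There is essentially no serious obstacle here; the only mild care needed is to point out that while the algebra structure of $B(V)^{J}$ is a rescaling of that of $B(V)$, the rescaling is by $G$-homogeneous (not $\mathbb{Z}^{n}$-homogeneous) scalars, which is exactly what keeps the $\mathbb{Z}^{n}$-grading and hence the PBW structure intact. With that remark in place, the two statements of the proposition follow from the two displays above.
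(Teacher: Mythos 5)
Your proposal is correct and follows essentially the same route as the paper: the Dynkin diagram equality comes from the braiding computation $\overline{q_{ii}}=q_{ii}$, $\overline{q_{ij}}\,\overline{q_{ji}}=q_{ij}q_{ji}$, and the root system equality comes from Lemma \ref{l2.6} together with the observation that the twist \eqref{eq2.14} only rescales products by scalars and hence preserves the $\Z^n$-grading and the degrees of the PBW generators. Your extra remark justifying why the $G$-homogeneous rescaling keeps the PBW structure intact is a slightly more explicit version of what the paper asserts directly.
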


\subsection{On GK-dimension}
Let $A$ be a finitely generated algebra, and let $V$ be a finite-dimensional generating subspace of $A$. Set $A_n=\sum_{0\leq i\leq n}V^i$. The Gelfand-Kirillov dimension (GK-dimension or GKdim for short) of $A$ is defined by 
\begin{equation}
\GKdim(A)=\overline{\lim}_{n\to \infty}\log_n {\dim(A_n)}.
\end{equation}
The value of GK-dimension does not depend on the choice of finite-dimensional generating subspace. If $A$ is not finitely generated, its GK-dimension is defined as
\begin{equation}
\GKdim(A)=\sup\{\GKdim(B) \ | \ B\ \mathrm{is} \ \mathrm{a}\ \mathrm{finitely}\ \mathrm{generated}\ \mathrm{subalgebra} \ \mathrm{of}\  A\}.
\end{equation}
Let $\mathcal{P}(V)$ be a pre-Nichols algebra of $V\in {_{\k G}^{\k G} \mathcal{YD}^\Phi}$. For each $n\in \N$, let $$\mathcal{P}(V)_n=\sum_{0\leq i\leq n}V^i\subset \mathcal{P}(V),\ \ \ \ B(V)_n=\sum_{0\leq i\leq n}V^i\subset B(V).$$
Since the canonical epimorphism from $\mathcal{P}(V)$ onto $B(V)$ is $\N_0$-graded, we have $\dim(\mathcal{P}(V)_n)\geq \dim(B(V)_n)$ for all $n\in \N$ and consequently 
\begin{equation}\label{eq2.18}
\GKdim(\mathcal{P}(V))\geq \GKdim(B(V)).
\end{equation}
Furthermore, if $B\subset \mathcal{P}(V)$ is a finitely generated $\N_0$-graded subalgebra, then it is clear that 
\begin{equation}\label{eq2.19}
\GKdim(B)\leq \GKdim(\mathcal{P}(V)).
\end{equation}

Let $J$ be a $2$-cochain on $G$. From \eqref{eq2.14} we obtain $(\mathcal{P}(V)^J)_n\subset \mathcal{P}(V)_n$ as linear spaces for each $n\in \N$. This implies that 
\begin{equation}\label{eq2.20}
\GKdim(\mathcal{P}(V)^J)\leq \GKdim(\mathcal{P}(V)).
\end{equation}
On the other hand, Applying \eqref{eq2.20} again gives
\begin{equation}\label{eq2.21}
\GKdim(\mathcal{P}(V))=\GKdim(((\mathcal{P}(V)^J)^{J^{-1}})\leq \GKdim(\mathcal{P}(V)^J).
\end{equation}
Thus we obtain the following lemma.

\begin{lemma}\label{l2.11}
Let $\mathcal{P}(V)$ be a pre-Nichols algebra of $V\in {_{\k G}^{\k G} \mathcal{YD}^\Phi}$, and let $J$ be a $2$-cochain on $G$. Then $$\GKdim(\mathcal{P}(V)^J)= \GKdim(\mathcal{P}(V)).$$
\end{lemma}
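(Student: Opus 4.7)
The plan is to establish the equality by showing that a twist by a $2$-cochain $J$ changes the multiplication of $\mathcal{P}(V)$ only by nonzero scalars on $G$-homogeneous products, so the dimensions of the natural filtration pieces coincide. Since $V$ is a finite-dimensional object of ${}_{\k G}^{\k G}\mathcal{YD}^\Phi$ and $V^J$ agrees with $V$ as a vector space, one may use $V$ as a common finite-dimensional generating subspace for both $\mathcal{P}(V)$ and $\mathcal{P}(V)^J$.

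First I would fix a $G$-homogeneous basis $\{X_1,\ldots,X_d\}$ of $V$ with $\Deg(X_i)=g_i$. For any word $X_{i_1}\circ X_{i_2}\circ\cdots\circ X_{i_k}$ in $\mathcal{P}(V)^J$, iterating the defining rule $X\circ Y=J(x,y)XY$ of \eqref{eq2.14} yields
\begin{equation*}
X_{i_1}\circ X_{i_2}\circ\cdots\circ X_{i_k} \;=\; c_{i_1,\ldots,i_k}\, X_{i_1}X_{i_2}\cdots X_{i_k},
\end{equation*}
where $c_{i_1,\ldots,i_k}\in \k^{*}$ is a product of values of $J$ on elements of $G$. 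Consequently the spanning sets of $(\mathcal{P}(V)^J)_n=\sum_{0\leq i\leq n}V^{\circ i}$ and of $\mathcal{P}(V)_n=\sum_{0\leq i\leq n}V^{i}$ coincide up to nonzero scalars, so they span the same subspace. In particular $\dim((\mathcal{P}(V)^J)_n)=\dim(\mathcal{P}(V)_n)$ for every $n\in\N$, from which $\GKdim(\mathcal{P}(V)^J)=\GKdim(\mathcal{P}(V))$ follows directly from the definition.

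Alternatively, and closer to how the excerpt already suggests, one can argue by two opposite inequalities. The containment $(\mathcal{P}(V)^J)_n\subseteq \mathcal{P}(V)_n$ established above gives $\GKdim(\mathcal{P}(V)^J)\leq\GKdim(\mathcal{P}(V))$ as in \eqref{eq2.20}. Applying the same containment to the twist of $\mathcal{P}(V)^J$ by the cochain $J^{-1}$ and using that $(\mathcal{P}(V)^J)^{J^{-1}}=\mathcal{P}(V)$ (a direct check from \eqref{eq2.14}) yields the reverse inequality, and the lemma follows.

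There is essentially no technical obstacle here: the statement is a formal consequence of the fact that the twist is defined by multiplying homogeneous products by nonzero scalars, which leaves the Hilbert series of the canonical filtration unchanged. The only small point worth verifying cleanly is that $(\mathcal{P}(V)^J)^{J^{-1}}=\mathcal{P}(V)$, which is a routine computation from the definition of the twisted product.
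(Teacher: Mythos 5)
Your proposal is correct and its second argument is exactly the paper's proof: the containment $(\mathcal{P}(V)^J)_n\subseteq\mathcal{P}(V)_n$ from \eqref{eq2.14} gives one inequality as in \eqref{eq2.20}, and twisting back by $J^{-1}$ gives the other as in \eqref{eq2.21}. Your first argument (that $\circ$-words and ordinary words differ only by nonzero scalars, so the filtration pieces actually coincide and have equal dimensions) is a harmless slight strengthening of the same observation, so no further comparison is needed.
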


\section{Graded pre-Nichols algebras}
In this section, we explore the structural properties of graded  pre-Nichols algebras in ${_{\k G}^{\k G} \mathcal{YD}^\Phi}$, with a particular emphasis on the associated based groups and homogenous subalgebras.  
Let $V=V_1\oplus \cdots \oplus V_\theta$ be a direct sum of simple objects in the category ${_{\k G}^{\k G} \mathcal{YD}^\Phi}$. Recall that a pre-Nichols algebra $\mathcal{P}(V)$ is said to be {\bf graded} if it is an $\N_0^\theta$-graded braided Hopf algebra such that $\deg(V_i)=e_i$ for $1\leq i\leq \theta$, where $\{e_1,\ldots ,e_\theta\}$ denotes the standard basis of $\N_0^\theta$. 

 \subsection{Based groups}

 Let $H$ be a braided Hopf algebra in $_{\k G}^{\k G} \mathcal{YD}^\Phi$. We call $G$ the based group of $H$, and denote by $G_H$ be the support group of $H$.  For any $V\in {_{\k G}^{\k G} \mathcal{YD}^\Phi}$ and a pre-Nichols algebra $\mathcal{P}(V)$, it is clear that $G_{\mathcal{P}(V)}=G_V$. A useful technique in the study of Nichols algebras, introduced in \cite{HLYY2,HLYY3}, is to realize a given Nichols algebra in different twisted Yetter-Drinfeld module categories. This method can be extended naturally to graded pre-Nichols algebras.

Let $H$ and $M$ be braided Hopf algebras in ${_{\k G}^{\k G} \mathcal{YD}^\Phi}$ and ${_{\k F}^{\k F} \mathcal{YD}^\Psi}$ respectively, where $F$ is also a finite abelian group and $\Psi$ is a $3$-cocycle on $F$. We say that $H$ and $M$ are isomorphic if there exists a linear isomorphism $f:H\to M$ that is both an algebra homomorphism and a coalgebra homomorphism.

\begin{lemma}\label{l3.1}
Let $H$ be a braided Hopf algebra in ${_{\k G}^{\k G} \mathcal{YD}^\Phi}$ and let $F$ be a subgroup of $G$ such that $G_H\subset F$. Then $H$ is also a braided Hopf algebra in ${_{\k F}^{\k F} \mathcal{YD}^{\Psi}}$, where $\Psi=\Phi_{F}$ is the restriction of $\Phi$ to $F$. In particular, $H$ is a braided Hopf algebra in ${_{\k G_H}^{\k G_H} \mathcal{YD}^{\Phi_{G_H}}}$.
\end{lemma}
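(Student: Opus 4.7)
The plan is to show that all the Yetter--Drinfeld module data and all the braided Hopf algebra axioms for $H$ only invoke values of the $3$-cocycle $\Phi$ on the subgroup $F$, so replacing $\Phi$ by $\Psi=\Phi_F$ and $G$ by $F$ leaves everything intact. (I shall read $G_V$ in the statement as $G_H$, the support group of $H$ viewed as a twisted Yetter--Drinfeld module.)

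First I would observe that since $G_H\subset F$, the $G$-grading $H=\bigoplus_{g\in G} H_g$ is concentrated on $F$: $H_g=0$ for $g\notin F$. Consequently $H$ is a $\k F$-comodule via the restricted grading, with coaction still given by $\rho(v)=g\otimes v$ for $v\in H_g$, $g\in F$. Next, the $\k G$-action $\triangleright$ restricts to a $\k F$-action on each homogeneous component, and I would check the projective representation axiom \eqref{eq2.4} in ${_{\k F}^{\k F} \mathcal{YD}^{\Psi}}$: for $g\in F$, $v\in H_g$, and $e,f\in F$, formula \eqref{e2.3} applied to $\Psi=\Phi_F$ gives $\Psi_g(e,f)=\Phi_g(e,f)$, so
\[
e\triangleright(f\triangleright v)=\Phi_g(e,f)\,(ef)\triangleright v=\Psi_g(e,f)\,(ef)\triangleright v,
\]
as required. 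Thus $H\in {_{\k F}^{\k F} \mathcal{YD}^{\Psi}}$ as a twisted Yetter--Drinfeld module.

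Next I would transport the algebraic structure. The tensor product action \eqref{n3.8}, associativity constraint, and braiding \eqref{eq2.7} in ${_{\k F}^{\k F} \mathcal{YD}^{\Psi}}$ are prescribed by the same formulas, evaluated on $\Psi$ instead of $\Phi$. But every such evaluation occurs at triples in $F\times F\times F$ (because each tensor factor of $H^{\otimes n}$ is supported on $F$), and $\Psi$ agrees with $\Phi$ there. Therefore the monoidal and braided structures that $H^{\otimes n}$ inherits from ${_{\k F}^{\k F} \mathcal{YD}^{\Psi}}$ coincide, as linear maps, with those it inherited from ${_{\k G}^{\k G} \mathcal{YD}^{\Phi}}$. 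In particular the multiplication $m\colon H\otimes H\to H$, unit $u$, comultiplication $\D\colon H\to H\otimes H$, counit $\e$, and antipode $S$ of $H$ remain morphisms in the new category, since $F$-homogeneity and $\k F$-equivariance are weaker than $G$-homogeneity and $\k G$-equivariance.

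Finally I would verify the braided Hopf algebra axioms. These axioms are diagrams built from $m$, $u$, $\D$, $\e$, $S$, the associativity constraint $a$, and the braiding $\mathcal{R}$. Since each of these maps, and each of $a$ and $\mathcal{R}$, is literally the same map in ${_{\k F}^{\k F} \mathcal{YD}^{\Psi}}$ as in ${_{\k G}^{\k G} \mathcal{YD}^{\Phi}}$ when restricted to $H$, every commuting diagram in ${_{\k G}^{\k G} \mathcal{YD}^{\Phi}}$ commutes verbatim in ${_{\k F}^{\k F} \mathcal{YD}^{\Psi}}$. Hence $H$ is a braided Hopf algebra in ${_{\k F}^{\k F} \mathcal{YD}^{\Psi}}$. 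There is no serious obstacle here; the content is entirely bookkeeping, and the one point that deserves explicit verification is the equality $\Psi_g=\Phi_g$ on $F\times F$ for $g\in F$, which is immediate from \eqref{e2.3}.
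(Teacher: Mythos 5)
Your proposal is correct and follows essentially the same route as the paper: restrict the coaction to $\k F$ using $G_H\subset F$, note $\Psi_g=\Phi_g$ on $F\times F$ for $g\in F$ so each homogeneous component is an $(F,\Psi_g)$-projective representation, and observe that the structure maps of $H$ (whose $G$-degrees lie in $G_H\subset F$) remain morphisms with the same formulas in ${_{\k F}^{\k F}\mathcal{YD}^{\Psi}}$. Your reading of $G_V$ as the support group $G_H$ also matches the paper's intent.
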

\begin{proof}
Let $\rho: H\to \k G\otimes H$ be the comodule structure map. Since $\rho(H)\subset \k G_H\otimes H\subset \k F\otimes H$, the space $H$ inherits an $F$-comodule structure, and thus $H=\oplus_{g\in F} H_g$.  Take $X\in H_g$ and $e,f\in F$. Using  \eqref{eq2.4} we obtain $$e\triangleright(f\triangleright X)={\Phi}_g(e,f)ef\triangleright X={\Psi}_g(e,f) ef\triangleright X.$$ Hence each $H_g$ is an $(F,\Psi_g)$-projective representation, and consequently $H$ is a twisted Yetter-Drinfeld module in the category ${_{\k F}^{\k F} \mathcal{YD}^{\Psi}}$.
 
 It remains to show that $H$ is a braided Hopf algebra in ${_{\k F}^{\k F} \mathcal{YD}^{\Psi}}$. Let $m$ and $\bigtriangleup$ denote the product and coproduct of $H$, respectively. Since $m$ and $\bigtriangleup$ are morphisms in ${_{\k G}^{\k G} \mathcal{YD}^\Phi}$, for all $g\in F\subset G$ and $X,Y\in H$ we have 
\begin{eqnarray}
&g\triangleright m(X\otimes Y)=m(g\triangleright(X\otimes Y)),\label{eq3.1}\\
& X_1Y_1\otimes m(X_2\otimes Y_2)=m(XY)_1\otimes m(XY)_2,\label{eq3.2}
\end{eqnarray}
 where we use the Sweedler notation $\rho(X)=X_1\otimes X_2$ for all $X\in H$. Because $X_1Y_1, m(XY)_1\in G_H\subset F$, the identities \eqref{eq3.1}-\eqref{eq3.2} imply that $m$ is a morphism in ${_{\k F}^{\k F} \mathcal{YD}^{\Psi}}$. A similar argument shows that $\bigtriangleup$ is also a morphism in the same category. Thus $H$ is indeed a braided Hopf algebra in ${_{\k F}^{\k F} \mathcal{YD}^{\Psi}}$.
\end{proof}

We now present an alternative method for changing the based group of a graded pre-Nichols algebra. Let $\mathbbm{G}$ be a finite abelian group and let $\pi: \mathbbm{G}\to G$ be a group epimorphism. Define the $3$-cocycle $\pi^*{\Phi}$ on $\mathbbm{G}$  by $$\pi^*{\Phi}(e,f,g)=\Phi(\pi(e),\pi(f),\pi(g)),\ \forall e,f,g\in \mathbbm{G}.$$
If $\mathcal{P}(V)\in {_{\k G}^{\k G} \mathcal{YD}^\Phi}$ is graded, we shall show that $\mathcal{P}(V)$ can also be realized in ${_{\k \mathbbm{G}}^{\k \mathbbm{G}}\mathcal{YD}^{\pi^*{\Phi}}}$.

\begin{lemma}\label{l3.3}
Let $\mathbbm{G}$ be a finite abelian group and let $\pi: \mathbbm{G}\to G$ be a group epimorphism that admits a section $\iota: G\to \mathbbm{G}$, i.e., $\pi\circ \iota=\id_G$. For an object $(V,\triangleright, \rho)$ in ${_{\k G}^{\k G} \mathcal{YD}^\Phi}$, define 
\begin{eqnarray}
&\widetilde{ \rho}=(\iota\otimes \id)\rho,\label{eq3.3}\\
&\mathbbm{g}\ \widetilde{ \triangleright }X=\pi(\mathbbm{g})\triangleright X . \label{eq3.4}
\end{eqnarray}
Then $\widetilde{V}=(V, \widetilde{ \triangleright }, \widetilde{ \rho})$ is a twisted Yetter-Drinfeld module over $(\k \mathbbm{G}, \pi^*\Phi)$. Moreover, if $V$ is simple, so is $\widetilde{V}$.
\end{lemma}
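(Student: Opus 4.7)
The plan is to verify directly that $(V,\widetilde{\triangleright},\widetilde{\rho})$ satisfies the three pieces of structure required by Definition~2.1 for the coquasi-Hopf algebra $(\k\mathbbm{G},\pi^*\Phi)$, and then to observe that simplicity is preserved essentially for free. First I would check that $\widetilde{\rho}$ is a well-defined $\k\mathbbm{G}$-comodule structure. Because each $\iota(g)$ is a group-like element of $\k\mathbbm{G}$, its coproduct is $\iota(g)\otimes\iota(g)$, so coassociativity and counitarity of $\widetilde{\rho}=(\iota\otimes\id)\rho$ reduce immediately to the corresponding properties of $\rho$. The upshot is that $\widetilde{V}$ is $\mathbbm{G}$-graded with support contained in $\iota(G)$, namely $\widetilde{V}_{\iota(g)}=V_g$ for every $g\in G$ (note that here $\iota$ only needs to be a set-theoretic section).

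Second, I would verify that each piece $\widetilde{V}_{\iota(g)}=V_g$ is a projective $\mathbbm{G}$-representation with 2-cocycle $(\pi^*\Phi)_{\iota(g)}$. The key observation is that formula \eqref{e2.3} and the definition of $\pi^*\Phi$ together yield
\begin{equation}
(\pi^*\Phi)_{\iota(g)}(\mathbbm{e},\mathbbm{f})=\Phi_{g}(\pi(\mathbbm{e}),\pi(\mathbbm{f})),\qquad\forall\,\mathbbm{e},\mathbbm{f}\in\mathbbm{G}.
\end{equation}
Combining this with the defining relation $\mathbbm{g}\,\widetilde{\triangleright}\,X=\pi(\mathbbm{g})\triangleright X$ and the projective representation axiom \eqref{eq2.4} for the original $V_g$, the required identity $\mathbbm{e}\,\widetilde{\triangleright}(\mathbbm{f}\,\widetilde{\triangleright}\,X)=(\pi^*\Phi)_{\iota(g)}(\mathbbm{e},\mathbbm{f})(\mathbbm{e}\mathbbm{f})\,\widetilde{\triangleright}\,X$ falls out directly. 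The compatibility between the new action and the new grading is automatic since $\widetilde{\triangleright}$ factors through the $G$-action, which preserves $V_g$. This concludes the verification that $\widetilde{V}\in{_{\k\mathbbm{G}}^{\k\mathbbm{G}}\mathcal{YD}^{\pi^*\Phi}}$.

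Finally, for the simplicity claim: if $V$ is simple then $V=V_g$ for a single $g\in G$, so $\widetilde{V}$ is concentrated in the single $\mathbbm{G}$-degree $\iota(g)$. Any subspace of $\widetilde{V}$ is therefore automatically $\mathbbm{G}$-graded, so a twisted Yetter-Drinfeld submodule of $\widetilde{V}$ is the same thing as a $\mathbbm{G}$-submodule for the action $\widetilde{\triangleright}$. Since this action factors through $\pi$, $\mathbbm{G}$-submodules coincide with $G$-submodules of $V$ for the projective representation $V_g$, and simplicity of $V$ transfers to $\widetilde{V}$.

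There is no real obstacle: the whole statement is a bookkeeping exercise and the only computation with any content is the identity $(\pi^*\Phi)_{\iota(g)}=\Phi_g\circ(\pi\times\pi)$, which is a direct unwinding of \eqref{e2.3}. I would present the argument in the order above, leading with the comodule check (which explains why $\iota$ only needs to be a section at the level of sets) and then using the cocycle identity once to handle both the projective representation axiom and the compatibility with the grading.
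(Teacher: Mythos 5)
Your proposal is correct and follows essentially the same route as the paper: the heart of both arguments is the identity $(\pi^*\Phi)_{\iota(g)}(\mathbbm{e},\mathbbm{f})=\Phi_g(\pi(\mathbbm{e}),\pi(\mathbbm{f}))$, obtained from \eqref{e2.3} and $\pi\circ\iota=\id_G$, which yields the projective-representation axiom for each component $V_{\iota(g)}$. The only cosmetic difference is in the simplicity step: you use that a simple object is concentrated in a single $G$-degree so that every subspace is automatically graded, whereas the paper argues by contradiction, transporting an arbitrary nontrivial subobject of $\widetilde{V}$ back to a nontrivial subobject of $V$; both amount to the same bijection between subobjects.
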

\begin{proof}  
Since $V$ is a $G$-graded, we may write $V=\bigoplus_{g\in G}V_g$. Then $V$ inherits an $G$-grading by setting 
\[ V_\mathbbm{g}=
\begin{cases}
V_g & \ \mathrm{if}\ \mathbbm{g}=\iota(g)\ \mathrm{for}\ \mathrm{some}\ g\in G, \\
\ 0, & \ \mathrm{otherwise}. 
\end{cases}\]
To show that $\widetilde{V}$ belongs to $(\k \mathbbm{G}, \pi^*\Phi)$, it suffices to verify that each homogenous component ${V}_{\iota(g)}$ is a projective $\mathbbm{G}$-representation with respect to 
 2-cocycle ${\pi^*{\Phi}}_{\iota(g)}$. Take $e,f\in \mathbbm{G}$ and $v\in {V}_{\iota(g)}$, we have 
\begin{eqnarray*}
&&e\ \widetilde{ \triangleright }(f\ \widetilde{ \triangleright } v)= \pi(e)\triangleright (\pi(f)\triangleright v)\\
&=&{\Phi}_g(\pi(e),\pi(f)) \pi(ef)\triangleright v\\
&=&\frac{\Phi(g,\pi(e),\pi(f))\Phi(\pi(e),\pi(f),g)}{\Phi(\pi(e),g,\pi(f))} ef\ \widetilde{ \triangleright } v\\
&=&\frac{\Phi(\pi\circ\iota(g),\pi(e),\pi(f))\Phi(\pi(e),\pi(f),\pi\circ\iota(g))}{\Phi(\pi(e),\pi\circ\iota(g),\pi(f))} ef\ \widetilde{ \triangleright }v\\
&=&\frac{\pi^*\Phi(\iota(g),e,f)\pi^*\Phi(e,f,\iota(g))}{\pi^*\Phi(e,\iota(g),f)} ef\ \widetilde{ \triangleright }v\\
&=&{\pi^*\Phi}_{\iota(g)}(e,f) ef\ \widetilde{ \triangleright } v.
\end{eqnarray*}

Thus ${V}_{\iota(g)}$ is indeed a projective $\mathbbm{G}$-representation with respect to ${\pi^*{\Phi}}_{\iota(g)}$, and consequently $\widetilde{V}\in {_{\k \mathbbm{G}}^{\k \mathbbm{G}}\mathcal{YD}^{\pi^*{\Phi}}}$.

Now suppose $V$ is simple. If $\widetilde{V}$ is not simple, it would contain a nontrivial subobject $\widetilde{U}$. Because $\widetilde{V}=\bigoplus_{\mathbbm{g}\in \mathbbm{G}}V_\mathbbm{g}=\bigoplus_{g\in G}V_{\iota(g)}$, we have $\widetilde{U}=\bigoplus_{g\in G}U_{\iota(g)}$ with $U_{\iota(g)}\subset V_{\iota(g)}$. Define $U=\bigoplus_{g\in G}U_{g}$, where $U_{g}=U_{\iota(g)}$ as vector spaces. Then $U$ becomes an object in ${_{\k G}^{\k G} \mathcal{YD}^\Phi}$ by defining $g\triangleright u=\iota(g)\widetilde{\triangleright}\ u$ for all $g\in G$ and $u\in U$. It is clear that $U$ is a nontrivial subobject of $V$, which is a contradiction since $V$ is simple. Thus $\widetilde{V}$ is also simple.
\end{proof}

\begin{lemma}\label{l3.4}
 Let $\mathbbm{G}$ be a finite abelian group and let $\pi: \mathbbm{G}\to G$ be  a group epimorphism that admits a section $\iota:G\to \mathbbm{G}$. Then every graded pre-Nichols algebra $\mathcal{P}(V)$ in  ${_{\k G}^{\k G} \mathcal{YD}^\Phi}$ is isomorphic to a graded pre-Nichols algebra in the category ${_{\k \mathbbm{G}}^{\k \mathbbm{G}} \mathcal{YD}^{\pi^*\Phi}}$.
 \end{lemma}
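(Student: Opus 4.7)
The plan is to keep the underlying graded vector space, multiplication, and comultiplication of $\mathcal{P}(V)$ unchanged and to equip it with a new $\mathbbm{G}$-grading and $\mathbbm{G}$-action that make it a braided Hopf algebra in ${_{\k\mathbbm{G}}^{\k\mathbbm{G}}\mathcal{YD}^{\pi^*\Phi}}$; the identity map will then furnish the required isomorphism. The key subtlety is that the section $\iota\colon G\to\mathbbm{G}$ is only set-theoretic, so one cannot simply redefine the $G$-degree $g$ of every element to be $\iota(g)$, as that would destroy multiplicativity. The crucial idea is to use instead the multiplicative $\N_0^\theta$-grading of $\mathcal{P}(V)$ to produce a coherent $\mathbbm{G}$-grading.

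Concretely, set $g_i=\Deg(V_i)$ and, for each $\mathbf n=(n_1,\dots,n_\theta)\in\N_0^\theta$, put
\begin{equation*}
\widetilde g(\mathbf n):=\iota(g_1)^{n_1}\cdots\iota(g_\theta)^{n_\theta}\in\mathbbm{G}.
\end{equation*}
Declare the $\mathbbm{G}$-degree of every element of $\mathcal{P}(V)_{\mathbf n}$ to be $\widetilde g(\mathbf n)$; since $\mathbbm{G}$ is abelian and $\widetilde g(\mathbf n+\mathbf m)=\widetilde g(\mathbf n)\widetilde g(\mathbf m)$, this defines a genuine multiplicative $\mathbbm{G}$-grading on $\mathcal{P}(V)$, and because $\pi\circ\iota=\id_G$ each $\widetilde g(\mathbf n)$ projects via $\pi$ onto the original $G$-degree $\prod_i g_i^{n_i}$ of $\mathcal{P}(V)_{\mathbf n}$. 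Define the $\mathbbm{G}$-action by $\mathbbm g\,\widetilde\triangleright\,X:=\pi(\mathbbm g)\triangleright X$, in direct analogy with \eqref{eq3.4}. A short computation yields
\begin{equation*}
(\pi^*\Phi)_{\mathbbm g}(\mathbbm e,\mathbbm f)=\Phi_{\pi(\mathbbm g)}\bigl(\pi(\mathbbm e),\pi(\mathbbm f)\bigr)\qquad \text{for all } \mathbbm e,\mathbbm f,\mathbbm g\in\mathbbm{G},
\end{equation*}
and, exactly as in the proof of Lemma~\ref{l3.3}, this implies that each $\mathbbm{G}$-homogeneous component of $\mathcal{P}(V)$ is a projective $\mathbbm{G}$-representation with respect to the appropriate $2$-cocycle of $\pi^*\Phi$. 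Hence $\mathcal{P}(V)$ becomes an object of ${_{\k\mathbbm{G}}^{\k\mathbbm{G}}\mathcal{YD}^{\pi^*\Phi}}$.

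It then remains to check that the multiplication and comultiplication of $\mathcal{P}(V)$ are morphisms in the new category. The braiding is literally unchanged: for homogeneous $X$ of $\mathbbm{G}$-degree $\mathbbm g$,
\begin{equation*}
\widetilde{\mathcal R}(X\otimes Y)=\mathbbm g\,\widetilde\triangleright\,Y\otimes X=\pi(\mathbbm g)\triangleright Y\otimes X=\mathcal R(X\otimes Y),
\end{equation*}
and the associativity constraints of the two categories coincide on the underlying triples since $\pi^*\Phi(\widetilde e,\widetilde f,\widetilde g)=\Phi(\pi(\widetilde e),\pi(\widetilde f),\pi(\widetilde g))$. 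All braided-Hopf-algebra axioms therefore transport verbatim, and the original $\N_0^\theta$-grading exhibits $\mathcal{P}(V)$ as a graded pre-Nichols algebra of $\widetilde V$ in ${_{\k\mathbbm{G}}^{\k\mathbbm{G}}\mathcal{YD}^{\pi^*\Phi}}$. The only genuinely delicate point is the one flagged at the outset: the construction uses in an essential way that $\mathcal{P}(V)$ is $\N_0^\theta$-graded, so that $\widetilde g(\mathbf n)$ is a well-defined multiplicative assignment; this is precisely why the hypothesis of gradedness appears in the statement, and why the same recipe would not work for an arbitrary pre-Nichols algebra.
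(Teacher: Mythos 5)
Your proposal is correct and takes essentially the same route as the paper: the key idea in both is to pull the action back along $\pi$ and to define the new $\mathbbm{G}$-grading through the $\N_0^\theta$-grading, assigning degree $\iota(g_1)^{n_1}\cdots\iota(g_\theta)^{n_\theta}$ to the component of degree $(n_1,\dots,n_\theta)$, so that all associator and braiding constants are unchanged because $\pi\circ\iota=\id_G$. The only presentational difference is that the paper carries out the verification on the tensor algebra (showing the identity $T(V)\to T(\widetilde V)$ is a coalgebra map by induction on length and then passing to the quotient by the $\N_0^\theta$-graded Hopf ideal), which also makes explicit that the transported object is a quotient of $T(\widetilde V)$ as required by the paper's definition of a pre-Nichols algebra, whereas you transport the axioms directly on $\mathcal{P}(V)$; to be fully in line with that definition you would add the one-line remark that the resulting connected graded braided Hopf algebra generated by its primitive degree-one part $\widetilde V$ is indeed such a quotient.
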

\begin{proof}
Let $\widetilde{V}$ be the twisted Yetter-Drinfeld module in ${_{\k \mathbbm{G}}^{\k \mathbbm{G}}\mathcal{YD}^{\pi^*{\Phi}}}$ obtained from $V$ via \eqref{eq3.3}-\eqref{eq3.4}. Denote by $T(V)$ and $T(\widetilde{V})$ the tensor algebras of $V\in {_{\k G}^{\k G} \mathcal{YD}^\Phi}$ and of $\widetilde{V}\in {_{\k \mathbbm{G}}^{\k \mathbbm{G}}\mathcal{YD}^{\pi^*{\Phi}}}$, respectively. As vector spaces, $\widetilde{V}=V$, hence $T(V)=T(\widetilde{V})$ and the identity map $\epsilon: T(V)\to T(\widetilde{V})$ is an algebra isomorphism, because both tensor algebras are freely generated by the same underlying vector space. 

We now show that $\epsilon$ is also a coalgebra morphism, and therefore an isomorphism of braided Hopf algebras. Write $V=V_1\oplus \cdots \oplus V_\theta$ as a direct sum of simple objects. By Lemma \ref{l3.3},   
$\widetilde{V}=\widetilde{V}_1\oplus \cdots \oplus\widetilde{V}_\theta$ is also a direct sum of simple objects. Clearly, $T(V)$ and $T(\widetilde{V})$ are both $\N_0^\theta$-graded with $\deg(V_i)=\deg(\widetilde{V}_i)=e_i$ for $1\leq i\leq \theta$. For a homogenous element $X\in T(V)$ of degree $\deg(X)=\sum_{i=1}^\theta k_ie_i$, denote by $g_X$ its $G$-degree in $T(V)$ and by $\mathbbm{g}_X$ its $\mathbbm{G}$-degree in $T(\widetilde{V})$. Then 
\begin{equation}\label{eq3.5}
g_X=g_1^{k_1}g_2^{k_2}\cdots g_\theta^{k_\theta},\ \ \mathbbm{g}_X=\iota(g_1)^{k_1}\iota(g_2)^{k_2}\cdots \iota(g_\theta)^{k_\theta}
\end{equation}
and $\pi(\mathbbm{g}_X)=g_X.$

Let $\D$ and $\widetilde{\D}$ be the coproducts of $T(V)$ and $T(\widetilde{V})$ respectively. We will prove that 
\begin{equation}
(\epsilon\otimes \epsilon)\D=\widetilde{\D}\circ \epsilon
\end{equation}
 by induction on the lengths of $\N_0^\theta$-homogenous elements. For $X\in V$, both $\D(X)$ and $\widetilde{\D}(X)$ equal to $X\otimes 1 +1\otimes X$, thus the claim holds for generators. 
 
Assume the statement is true for all elements of length $n\geq 1$. Take a nomogenous element $X\in V^{n+1}$. We may write $X=\sum_{i\in I} X_iY_i$ with $X_i\in V$ and $Y_i\in V^n$ homogenous. Using Sweedler's notation $\D(Y_i)=Y_{i1}+Y_{i2}$ and the induction hypothesis, we have  
\begin{equation}\label{eq1}
\begin{split}
&(\epsilon\otimes \epsilon)\D(X_iY_i)\\
&=(\epsilon\otimes \epsilon)(X_i\otimes 1+1\otimes X_i)(Y_{i1}\otimes Y_{i2})\\
&=(\epsilon\otimes \epsilon)[\Phi(g_{X_i}, g_{Y_{i1}},g_{Y_{i2}}) X_iY_{i1}\otimes Y_{i2}+\frac{\Phi(g_{X_i}, g_{Y_{i1}},g_{Y_{i2}})}{\Phi(g_{Y_{i1}},g_{X_i},g_{Y_{i2}})} g_{X_i}\triangleright Y_{i1}\otimes X_iY_{i2}]\\
&=\Phi(g_{X_i}, g_{Y_{i1}},g_{Y_{i2}}) X_iY_{i1}\otimes Y_{i2}+\frac{\Phi(g_{X_i}, g_{Y_{i1}},g_{Y_{i2}})}{\Phi(g_{Y_{i1}},g_{X_i},g_{Y_{i2}})} g_{X_i}\triangleright Y_{i1}\otimes X_iY_{i2},
\end{split}
\end{equation}
and 
\begin{equation}\label{eq2}
\begin{split}
&\widetilde{\D}\circ \epsilon(X_iY_i)=\widetilde{\D}(\epsilon(X_i)\epsilon(Y_i))\\
&=(X_i\otimes 1+1\otimes X_i)(Y_{i1}\otimes Y_{i2})\\
&=\pi^*\Phi(\mathbbm{g}_{X_i}, \mathbbm{g}_{Y_{i1}},\mathbbm{g}_{Y_{i2}})X_iY_{i1}\otimes Y_{i2}\\
&\ \ \ \ \  \ \ \ \ \ \ \ \ \ \ \ + \frac{\pi^*\Phi(\mathbbm{g}_{X_i}, \mathbbm{g}_{Y_{i1}},\mathbbm{g}_{Y_{i2}})}{\pi^*\Phi(\mathbbm{g}_{Y_{i1}},\mathbbm{g}_{X_i},\mathbbm{g}_{Y_{i2}})}\mathbbm{g}_{X_i}\widetilde{\triangleright}\ Y_{i1} \otimes X_iY_{i2}\\
&=\Phi(\pi(\mathbbm{g}_{X_i}), \pi(\mathbbm{g}_{Y_{i1}}),\pi(\mathbbm{g}_{Y_{i2}}))X_iY_{i1}\otimes Y_{i2}\\
&\ \ \ \ \ \ \ \ \ \ \ \ \ \ \ \ +\frac{\Phi(\pi(\mathbbm{g}_{X_i}), \pi(\mathbbm{g}_{Y_{i1}}),\pi(\mathbbm{g}_{Y_{i2}}))}{\Phi(\pi(\mathbbm{g}_{Y_{i1}}),\pi(\mathbbm{g}_{X_i}),\pi(\mathbbm{g}_{Y_{i2}}))}\mathbbm{g}_{X_i}\widetilde{\triangleright}\ Y_{i1} \otimes X_iY_{i2}\\
&=\Phi(g_{X_i}, g_{Y_{i1}},g_{Y_{i2}}) X_iY_{i1}\otimes Y_{i2}+\frac{\Phi(g_{X_i}, g_{Y_{i1}},g_{Y_{i2}})}{\Phi(g_{Y_{i1}},g_{X_i},g_{Y_{i2}})} g_{X_i}\triangleright Y_{i1}\otimes X_iY_{i2}.
\end{split}
\end{equation}

Thus  $(\epsilon\otimes \epsilon)\D(X_iY_i)=\widetilde{\D}\circ \epsilon(X_iY_i)$ for each $i\in I$, whence $(\epsilon\otimes \epsilon)\D(X)=\widetilde{\D}\circ \epsilon (X)$.
This shows that $\epsilon$ is a coalgebra morphism, and consequently an isomorphism of braided Hopf algebras between $T(V)$ and $T(\widetilde{V})$.

Finally, we show that $\mathcal{P}(V)$ is isomorphic to a quotient of $T(\widetilde{V})$. Let $I$ be the braided Hopf ideal of $T(V)$ such that $\mathcal{P}(V)=T(V)/I$. Because $\mathcal{P}(V)$ is graded, $I$ must be $\N_0^\theta$-graded. Set $\widetilde{I}=\epsilon(I)$. It is a braided Hopf ideal of $T(\widetilde{V})$, hence $T(\widetilde{V})/\widetilde{I}$ is a braided Hopf algebra in ${_{\k \mathbbm{G}}^{\k \mathbbm{G}} \mathcal{YD}^{\pi^*(\Phi)}}$. Clearly, 
the map $\epsilon$ induces an isomorphism from $\mathcal{P}(V)=T(V)/I$ to $T(\widetilde{V})/\widetilde{I}\in {_{\k \mathbbm{G}}^{\k \mathbbm{G}} \mathcal{YD}^{\pi^*(\Phi)}}$, which proves the lemma. 
\end{proof}

Recall that for each finite abelian group $G$, there exists an abelian group $\widehat{G}$ and a canonical group epimorphism $\pi:\widehat{G}\to G$ defined by \eqref{eq2.13}. With these notations we have

\begin{proposition}\label{p3.4}
Let $\mathcal{P}(V)$ be a graded pre-Nichols algebra of diagonal type in the category ${_{\k G}^{\k G} \mathcal{YD}^\Phi}$. Then $\mathcal{P}(V)$ is twist equivalent to a graded pre-Nichols algebra of diagonal type in the ordinary Yetter-Drinfeld module category ${_{\k \widehat{G_V}}^{\k \widehat{G_V}} \mathcal{YD}}$.
\end{proposition}
\begin{proof}
By Lemma  \ref{l3.1}, $\mathcal{P}(V)$ is already a pre-Nichols algebra in ${_{\k G_V}^{\k G_V} \mathcal{YD}^{\Phi_{G_V}}}$. Applying Lemma \ref{l3.4}, we see that $\mathcal{P}(V)$ is isomorphic to a graded pre-Nichols algebra in ${_{\k \widehat{G_V}}^{\k \widehat{G_V}} \mathcal{YD}^{\pi^*\Phi_{G_V}}}$. Now, because $\mathcal{P}(V)$ is of diagonal type, Lemma \ref{l2.4}  implies that $\Phi_{G_V}$ is an abelian $3$-cocycle on $G_V$. Hence,  by Proposition \ref{p2.3}, the pulled-back cocycle  $\pi^*\Phi_{G_V}$ is a $3$-coboundary on $\widehat{G_V}$. Consequently, there exists a $2$-cochain $J$ on $\widehat{G_V}$ such that $\partial J=\pi^*\Phi_{G_V}$. Finally, Lemma \ref{l2.5} shows that the twisting $\mathcal{P}(V)^{J^{-1}}$ is a graded pre-Nichols algebra in the ordinary Yetter-Drinfeld category ${_{\k \widehat{G_V}}^{\k \widehat{G_V}} \mathcal{YD}}$.
\end{proof}

Since Nichols algebras are naturally graded, we obtain the following corollary immediately by Lemma \ref{l2.6} and Proposition \ref{p3.4}.
\begin{corollary}\label{c3.5}
Let $B(V)$ be a Nichols algebra of diagonal type in ${_{\k G}^{\k G} \mathcal{YD}^\Phi}$. Then $B(V)$ is twist equivalent to a Nichols algebra of diagonal type in the ordinary Yetter-Drinfeld category 
${_{\k \widehat{G_V}}^{\k \widehat{G_V}} \mathcal{YD}}$.
\end{corollary}

 \subsection{Homogenous subalgebras}
 
 Let $\mathcal{P}(V)$ be a graded pre-Nichols algebra in ${_{\k G}^{\k G} \mathcal{YD}^\Phi}$. For any homogenous subobject  $I$ of $\mathcal{P}(V)$, we denote by $\Deg(I)$ the set of $\N_0^\theta$-degrees of nonzero homogenous elements of $I$. The main result of this subsection is as follows.
 \begin{proposition}\label{p3.6}
 Let $\mathcal{P}(V)$ be a graded pre-Nichols algebra in ${_{\k G}^{\k G} \mathcal{YD}^\Phi}$ with counit $\epsilon$, and let $A$ be an homogenous subalgebra of $\mathcal{P}(V)$. Assume that there exist homogenous subobjects $M,N\subset \ker \epsilon$ satisfying the following conditions:
 \begin{itemize}
\item[(a)] $\Deg(A) \cap \Deg(M)=\emptyset$ and $\Deg(A) \cap \Deg(N)=\emptyset$;
\item[(b)] $M$ and $N$ are $A$-bimodules under the product of $\mathcal{P}(V)$, and $M^2\subset M, N^2\subset N$;
\item[(c)] $\D(A)\subset A\otimes A+M\otimes N$.
 \end{itemize}
Then $A$ is a braided bialgebra with a new comultiplication $\widetilde{\D}=P\circ \D$, where $$P:A\otimes A+ M\otimes N \to A\otimes A$$ is the canonical projection.
 \end{proposition}
 \begin{proof}
 First, we show that $\widetilde{\D}$ is coassociative. By condition (c), we have 
 \begin{eqnarray}
 (\D\otimes \id)\D(A)&\subset & (A\otimes A)\otimes A+(M\otimes N)\otimes A+ (P(V)\otimes P(V))\otimes N. \label{equa4.1}\\
 (\id\otimes \D)\D(A)&\subset&  A\otimes (A\otimes A)+A\otimes (M\otimes N)+ M\otimes (P(V)\otimes P(V)).\label{equa4.2}
 \end{eqnarray}
 By condition (a), the right hand sides of \eqref{equa4.1} and \eqref{equa4.2} are direct sums. For a homogenous element $x\in A$,  we can write  
\begin{equation}\label{eq3.11}
(\D\otimes \id)\D(x)=(\widetilde{\D}\otimes \id)\widetilde{\D}(x)+ (x'_1\otimes x'_2)\otimes x'_3+ (x''_1\otimes x''_2)\otimes x''_3
\end{equation}
with $(x'_1\otimes x'_2)\otimes x'_3\in (M\otimes N)\otimes A$ and $(x''_1\otimes x''_2)\otimes x''_3\in (P(V)\otimes P(V))\otimes N$.
Similarly, 
\begin{equation}\label{eq3.12}
(\id\otimes \D)\D(x)=(\id\otimes \widetilde{\D} )\widetilde{\D}(x)+ Y'_1\otimes (Y'_2\otimes Y'_3)+ Y''_1\otimes (Y''_2\otimes Y''_3),
\end{equation}
where $Y'_1\otimes (Y'_2\otimes Y'_3)\in A\otimes (M\otimes N)$ and $Y''_1\otimes (Y''_2\otimes Y''_3)\in M\otimes (P(V)\otimes P(V))$.
Let $a$ denote the associator of the category ${_{\k G}^{\k G} \mathcal{YD}^\Phi}$.  From the coassociativity of $\D$ we have $$a\circ (\D\otimes \id)\D(x)=(\id\otimes \D)\D(x).$$ 
Substituting the two decompositions \eqref{eq3.11} and \eqref{eq3.12}, and considering the $\N_0^\theta$-degrees of the three summands in each decomposition, we obtain 
$$a\circ (\widetilde{\D}\otimes \id)\widetilde{\D}(x)=(\id\otimes \widetilde{\D} )\widetilde{\D}(x).$$ Hence $\widetilde{\D}$ is coassociative.

Next we verify that $\widetilde{\D}$ is an algebra homomorphism. For $X, Y\in A$, we write 
\begin{equation*}
\begin{split}
 \D(X)&=\widetilde{\D}(X)+X'_1\otimes X'_2, \\
\D(Y)&=\widetilde{\D}(Y)+Y'_1\otimes Y'_2
 \end{split}
 \end{equation*}
  with $X'_1\otimes X'_2, Y'_1\otimes Y'_2 \in M\otimes N$. Then we have 
\begin{equation*}
\begin{split}
\widetilde{\D}(XY)&=P\circ \D(XY)\\
&=P\big((\widetilde{\D}(X)+X'_1\otimes X'_2)(\widetilde{\D}(Y)+Y'_1\otimes Y'_2)\big)\\
&=P\big(\widetilde{\D}(X)\widetilde{\D}(Y)+\widetilde{\D}(X)(Y'_1\otimes Y'_2)+(X'_1\otimes X'_2)\widetilde{\D}(Y)+(X'_1\otimes X'_2)(Y'_1\otimes Y'_2)\big)\\
&=\widetilde{\D}(X)(\widetilde{\D}(Y).
\end{split}
\end{equation*}
Here the fourth identity follows from the condition (b). Thus $\widetilde{\D}$ is an algebra homomorphism.

Finally, we check the counit condition, that is $(\epsilon \otimes \id)\widetilde{\D}=(\id \otimes \epsilon)\widetilde{\D}=\id$. For each $X\in A$, we write $\D(X)=\widetilde{\D}(X)+X'_1\otimes X'_2$ with $X'_1\otimes X'_2\in M\otimes N$. So $(\epsilon \otimes \id)(X'_1\otimes X'_2)=0$ since $M,N\subset \ker \epsilon$. Consequently, 

\begin{equation}
\begin{split}
(\epsilon \otimes \id)\widetilde{\D}(X)&=(\epsilon \otimes \id)(\widetilde{\D}(X)+X'_1\otimes X'_2)\\
&=(\epsilon \otimes \id)\D(X)\\
&=X.
\end{split}
\end{equation}
A symmetric computation gives $(\id \otimes \epsilon)\widetilde{\D}(X)=X$. Hence we get $(\epsilon \otimes \id)\widetilde{\D}=(\id \otimes \epsilon)\widetilde{\D}=\id$.

We complete the proof of the proposition.
\end{proof}
 
 Using Proposition \ref{p3.6}, we can now derive an interesting and important result. We need the following two lemmas. The first generalizes \cite[Corollary 5.2.12]{HS24} and is proved by a similar method.  
 
\begin{lemma}\label{l3.7}
Let $H=\oplus_{n\geq 0} H_n$ be a connected graded braided bialgebra in ${_{\k G}^{\k G} \mathcal{YD}^\Phi}$ that is generated by $H_1$. Then $H$ is a braided Hopf algebra.
\end{lemma} 
 \begin{proof}
It suffices to show that $H$ admits an antipode. Let $\eta$ and $\varepsilon$ be the unit and counit of $H$ respectively, and let $$\mathcal{S}=\sum_{l\geq 0}(\eta\varepsilon-\id)^{*l},$$ where $*$ denotes the convolution product and we set $(\eta\varepsilon-\id)^{*0}=\eta\varepsilon$.
Let $1$ be the identity element of $H$. Since $(\eta\varepsilon-\id)(1)=0$, for any homogenous element $X\in H_l$ we have $(\eta\varepsilon-\id)^{*m}(X)=0$ whenever $m>l$. Hence $\mathcal{S}(X)=\sum_{1\leq i\leq l}(\eta\varepsilon-\id)^{*i} (X)$ is a finite sum, and $\mathcal{S}$ is well defined linear endomorphism of $H$. Set $f=\eta\varepsilon-\id$, we have
\begin{equation*}
\begin{split}
(\id*\mathcal{S})(X)&=((\eta\varepsilon-f)* \sum_{1\leq i\leq l}f^{*i})(X)\\ 
&=\Big(\eta\varepsilon({X_1})-f({X_1})\Big)\Big(\sum_{0\leq i\leq l}f^{*i}({X_2})\Big)\\
&= \sum_{0\leq i\leq l}f^{*i}({X})- \sum_{0\leq i\leq l}f^{*i+1}({X})\\
&=\eta\varepsilon(X).
\end{split}
\end{equation*}
Here the last equality uses the fact that $f^{*l+1}({X})=0$. Thus $\id* \mathcal{S}=\eta\varepsilon$. A symmetric argument gives $\mathcal{S}*\id=\eta\varepsilon$. Therefore $\mathcal{S}$ is an antipode for $H$, and consequently $H$ is a braided Hopf algebra. 
\end{proof}

\begin{lemma}\label{l3.8}
Let $V=V_1\oplus \cdots \oplus V_\theta\in {_{\k G}^{\k G} \mathcal{YD}^\Phi}$ be a direct sum of simple objects with $\theta\geq 3$, and let $\mathcal{P}(V)$ be a graded pre-Nichols algebra of $V$ with $\deg(V_i)=e_i$ for $1\leq i\leq \theta$. Let $U=\ad_{V_i}(\ad_{V_j}(V_k))\subset \mathcal{P}(V)$, where the indices $i,j,k\in \{1,2,\ldots, \theta\}$ are pairwise distinct. If $U\neq 0$, then for every nonzero element $\mathbbm{X}\in U$ the coproduct takes the form 
$$\D(\mathbbm{X})=1\otimes \mathbbm{X}+ \mathbbm{X}\otimes 1+\sum_{l\in \mathcal{J}} {^l\mathbbm{X}}_{1}\otimes {^l\mathbbm{X}}_{2},$$
where for all $l\in \mathcal{J}$ (if $\mathcal{J}\neq \emptyset$) we have 
\begin{equation}\label{eq3.14}
\deg({^l\mathbbm{X}}_{1})\in \{e_i,e_j,e_i+e_j\},\ \ \ \deg({^l\mathbbm{X}}_{2})\in \{e_i+e_k,e_j+e_k,e_k\}.
\end{equation}
\end{lemma}

\begin{proof}
Assume the $G$-degree of $V_l$ is $g_l\in G$ for $1\leq l\leq \theta$. Without loss of generality, write $\mathbbm{X}=\ad_X(\ad_Y(Z))$ with $X\in V_i, Y\in V_j, Z\in V_k$. Direct calculation yields 
\begin{equation}\label{eq3.22}
\begin{split}
\D(\ad_Y(Z))&=\D(YZ -g_j \triangleright ZY)\\
&=(Y\otimes 1+1\otimes Y)(Z\otimes 1+1\otimes Z)\\
&\ \ \ \ -(g_j \triangleright Z\otimes 1+1\otimes g_j \triangleright Z)(Y\otimes 1+1\otimes Y)\\
&=\ad_Y(Z)\otimes 1+1\otimes \ad_Y(Z)+Y\otimes Z-g_k\triangleright Y\otimes g_j \triangleright Z,
\end{split}
\end{equation}
and 
\begin{equation}
\begin{split}
&\D(g_i\triangleright \ad_Y(Z))=g_i\triangleright \D(\ad_Y(Z))\\
&=g_i\triangleright (\ad_Y(Z)\otimes 1+1\otimes \ad_Y(Z)+Y\otimes Z-g_k\triangleright Y\otimes g_j \triangleright Z)\\
&=g_i\triangleright \ad_Y(Z)\otimes 1+1\otimes g_i\triangleright \ad_Y(Z)\\
&\ \ \ +\Phi_{g_i}(g_j,g_k)g_i\triangleright Y\otimes g_i\triangleright Z- \Phi_{g_i}(g_j,g_k)g_i\triangleright (g_k\triangleright Y) \otimes g_i\triangleright (g_j \triangleright Z)\\
&=g_i\triangleright \ad_Y(Z)\otimes 1+1\otimes g_i\triangleright \ad_Y(Z)+\Phi_{g_i}(g_j,g_k)g_i\triangleright Y\otimes g_i\triangleright Z\\
&\ \ \ -\Phi_{g_i}(g_j,g_k)\Phi_{g_j}(g_i,g_k)\Phi_{g_k}(g_i,g_j) (g_ig_k)\triangleright Y \otimes (g_ig_j) \triangleright Z.
\end{split}
\end{equation}

Consequently, we have 
\begin{equation}\label{eq3.24}
\begin{split}
&\D(\ad_X(\ad_Y(Z)))\\
&=\D(X\ad_Y(Z)-g_i\triangleright \ad_Y(Z) X)\\
&=(X\otimes 1+1\otimes X)\\
&\ \ \ \ \ \ \ \times \big[\ad_Y(Z)\otimes 1+1\otimes \ad_Y(Z)+Y\otimes Z-g_k\triangleright Y\otimes g_j \triangleright Z\big]\\
&\ \  - \big[g_i\triangleright \ad_Y(Z)\otimes 1+1\otimes g_i\triangleright \ad_Y(Z)+\Phi_{g_i}(g_j,g_k)g_i\triangleright Y\otimes g_i\triangleright Z\\
&\ \ \ \ \ \ \ \ \ -\Phi_{g_i}(g_j,g_k)\Phi_{g_j}(g_i,g_k)\Phi_{g_k}(g_i,g_j) (g_ig_k)\triangleright Y \otimes (g_ig_j) \triangleright Z\big]\\
&\ \ \ \ \ \ \ \times (X\otimes 1+1\otimes X)\\
&=1\otimes \ad_X(\ad_Y(Z))+\ad_X(\ad_Y(Z))\otimes 1\\
& \ \ + \big[X\otimes \ad_Y(Z)-(g_jg_k)\triangleright X\otimes g_i\triangleright \ad_Y(Z)\big]\\
&\ \ +\big[\Phi(g_i,g_j,g_k)\Phi^{-1}(g_j,g_i,g_k)g_i\triangleright Y\otimes \ad_X Z \\
& \ \ \ \ \ \ \ \ \ \ \ \ \ -\Phi(g_i,g_k,g_j)g_ig_k\triangleright Y\otimes \ad_X(g_j \triangleright Z) \big]\\
&\ \ +\Phi(g_i,g_j,g_k)\big[ XY\otimes Z-Xg_k\triangleright Y\otimes g_j\triangleright Z- g_i\triangleright Y g_k\triangleright X\otimes g_i\triangleright Z\\
&\ \ \ \ \ \ \ \ \ \ \ \ \ \ \ \ \ \ \ \ \ \ +\Phi(g_j,g_i,g_k)\Phi(g_k,g_i,g_j)g_ig_k\triangleright Yg_k\triangleright X\otimes g_ig_j\triangleright Z\big].
\end{split}
\end{equation}

Define  
\begin{eqnarray}
{^1\mathbbm{X}}_{1}\otimes {^1\mathbbm{X}}_{2}&=&X\otimes \ad_Y(Z)-(g_jg_k)\triangleright X\otimes g_i\triangleright \ad_Y(Z),
\end{eqnarray}
\begin{equation}
\begin{split}
{^2\mathbbm{X}}_{1}\otimes {^2\mathbbm{X}}_{2}& \ \ =\ \ \Phi(g_i,g_j,g_k)\Phi^{-1}(g_j,g_i,g_k)g_i\triangleright Y\otimes \ad_X Z \\
& \ \ \ \ \ \  -\Phi(g_i,g_k,g_j)g_ig_k\triangleright Y\otimes \ad_X(g_j \triangleright Z),
\end{split}
\end{equation}
and 
\begin{equation}
\begin{split}
&{^3\mathbbm{X}}_{1}\otimes {^3\mathbbm{X}}_{2}\\
&=\Phi(g_i,g_j,g_k)\big[ XY\otimes Z-Xg_k\triangleright Y\otimes g_j\triangleright Z- g_i\triangleright Y g_k\triangleright X\otimes g_i\triangleright Z\\
&\ \ \ \  +\Phi(g_j,g_i,g_k)\Phi(g_k,g_i,g_j)g_ig_k\triangleright Yg_k\triangleright X\otimes g_ig_j\triangleright Z\big].
\end{split}
\end{equation}
Set $\mathcal{J}=\{l|{^l\mathbbm{X}}_{1}\otimes {^l\mathbbm{X}}_{2}\neq 0,1\leq l\leq 3\}$. Then $$\D(\mathbbm{X})=1\otimes \mathbbm{X}+ \mathbbm{X}\otimes 1+\sum_{l\in \mathcal{J}} {^l\mathbbm{X}}_{1}\otimes {^l\mathbbm{X}}_{2},$$ 
and whenever $\mathcal{J}\neq \emptyset$ it is clear that ${^l\mathbbm{X}}_{1}$ and ${^l\mathbbm{X}}_{2}$ satisfy condition \eqref{eq3.14}.
\end{proof}

\begin{proposition}\label{p3.11}
Let $V=V_1\oplus \cdots \oplus V_\theta\in {_{\k G}^{\k G} \mathcal{YD}^\Phi}$ be a direct sum of simple objects with $\theta\geq 3$, and let $\mathcal{P}(V)$ be a graded pre-Nichols algebra of $V$ with $\deg(V_i)=e_i$ for $1\leq i\leq \theta$. Define $W=\ad_{V_i}(\ad_{V_j}(V_k))+\ad_{V_j}(\ad_{V_i}(V_k)) \subset \mathcal{P}(V)$, where the indices $i,j,k\in \{1,2,\ldots, \theta\}$ are pairwise distinct,  and let $A(W)$ be the subalgebra of $\mathcal{P}(V)$ generated by $W$. If $W$ is nonzero, then $A(W)$ is a pre-Nichols algebra of $W$. 
\end{proposition}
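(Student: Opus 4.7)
The plan is to apply Proposition \ref{p3.8} to the subobject $W \subset \mathcal{P}(V)$. First I would observe that $W$ is a nonzero $\N_0^\theta$-homogenous subobject of length $3$, concentrated in $\N_0^\theta$-degree $e_i+e_j+e_k$, so that $I := \deg(W) = \{e_i+e_j+e_k\}$ and $\N_0[I] = \{n(e_i+e_j+e_k) : n \in \N_0\}$. The task reduces to producing subsets $S, T \subset \N_0^\theta$ that satisfy (Cd1)--(Cd3).

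For $\alpha=(\alpha_1,\dots,\alpha_\theta) \in \N_0^\theta$, I would set
\begin{equation*}
S = \{\alpha \in \N_0^\theta : \alpha_i + \alpha_j > 2\alpha_k\}, \qquad T = \{\alpha \in \N_0^\theta : 2\alpha_k > \alpha_i + \alpha_j\}.
\end{equation*}
Both sets are defined by strict linear inequalities whose two sides are additive in $\alpha$, so (Cd1) is immediate. For (Cd2), any $n(e_i+e_j+e_k) \in \N_0[I]$ satisfies $\alpha_i+\alpha_j = 2n = 2\alpha_k$, hence lies in neither $S$ nor $T$; moreover adding such an element increases both sides of the defining inequality by the same amount $2n$, which preserves strict inequality and gives $S + \N_0[I] \subset S$ and $T + \N_0[I] \subset T$.

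The core step is verifying (Cd3). By Lemma \ref{l3.10}, every nonzero $\N_0^\theta$-homogenous element of $\ad_{V_i}(\ad_{V_j}(V_k))$ admits a standard decomposition whose non-primitive terms have first component in $\{e_i, e_j, e_i+e_j\}$ and second component in $\{e_i+e_k, e_j+e_k, e_k\}$. Applying the same lemma with the roles of $i$ and $j$ swapped gives the same two sets for $\ad_{V_j}(\ad_{V_i}(V_k))$. A direct coordinatewise check shows $\{e_i, e_j, e_i+e_j\} \subset S$ and $\{e_i+e_k, e_j+e_k, e_k\} \subset T$. For a general element $\mathbbm{X} = \mathbbm{X}^{(1)} + \mathbbm{X}^{(2)} \in W$ with $\mathbbm{X}^{(1)} \in \ad_{V_i}(\ad_{V_j}(V_k))$ and $\mathbbm{X}^{(2)} \in \ad_{V_j}(\ad_{V_i}(V_k))$, the coproduct splits as $\D(\mathbbm{X}) = \D(\mathbbm{X}^{(1)}) + \D(\mathbbm{X}^{(2)})$, and regrouping the resulting $\N_0^\theta$-homogenous terms by bi-degree yields a standard decomposition of $\mathbbm{X}$ whose non-primitive bi-degrees still lie in $S \times T$.

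With (Cd1)--(Cd3) verified, Proposition \ref{p3.8} immediately implies that $A(W)$ is a pre-Nichols algebra of $W$. The only mildly delicate point is the choice of $S$ and $T$: one needs linear inequalities that separate the bi-degrees $\{e_i, e_j, e_i+e_j\} \times \{e_i+e_k, e_j+e_k, e_k\}$ appearing in Lemma \ref{l3.10} from the diagonal $\N_0[I] = \N_0(e_i+e_j+e_k)$, and the asymmetric inequalities above accomplish this uniformly for both $\ad_{V_i}\ad_{V_j}$ and $\ad_{V_j}\ad_{V_i}$.
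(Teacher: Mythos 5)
Your proposal is correct and follows essentially the same route as the paper: it reduces the statement to Proposition \ref{p3.8} with $I=\{e_i+e_j+e_k\}$ and feeds in the bi-degree information of Lemma \ref{l3.10} (applied to both summands of $W$, with a regrouping of terms that the paper passes over silently). The only difference is your choice of $S$ and $T$ as the half-spaces $\alpha_i+\alpha_j>2\alpha_k$ and $2\alpha_k>\alpha_i+\alpha_j$, a clean and manifestly additively closed alternative to the paper's explicitly parametrized sets, which serves the same separating purpose.
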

\begin{proof}
Suppose that $W\neq 0$, then $\Deg(W)=\{e_i+e_j+e_k\}$. Let $\mathbbm{X}\in W$ be a nonzero element. By Lemma \ref{l3.8} we have 
\begin{equation}\label{eq3.19}
\D(\mathbbm{X})=1\otimes \mathbbm{X}+ \mathbbm{X}\otimes 1+\sum_{l\in \mathcal{J}} {^l\mathbbm{X}}_{1}\otimes {^l\mathbbm{X}}_{2},
\end{equation}
where $\deg({^l\mathbbm{X}}_{1})\in \{e_i,e_j,e_i+e_j\}$ and $\deg({^l\mathbbm{X}}_{2})\in \{e_i+e_k,e_j+e_k,e_k\}$ for each $l\in \mathcal{J}$ if $\mathcal{J}\neq \emptyset$.

Define
\begin{equation*}
\begin{split}
&S=\{k(e_i+e_j+e_k)+ae_i+be_j|k,a,b\in \N_0, a+b\neq 0\},\\
&T=\{k(e_i+e_j+e_k)+ae_i+be_j+ce_k|k,a,b,c\in \N_0, ab=0, c>0, a+b\leq c\}.
\end{split}
\end{equation*}
Clearly, both $S$ and $T$ are additively closed, and $ \{e_i,e_j,e_i+e_j\}\subset S$, $\{e_i+e_k,e_j+e_k,e_k\}\subset T$. Now, set 
\begin{equation}
\begin{split}
&M=\{X\in \mathcal{P}(V)|\deg (X)\in S\}\\
&N=\{X\in \mathcal{P}(V)|\deg (X)\in T\}.
\end{split}
\end{equation}
Because $S$ and $T$ are closed under addition, we have $M^2\subset M$ and $N^2\subset N$. Since $$\Deg(A(W))=\{n(e_i+e_j+e_k)| n\in \N_0\},$$ it is immediate that $\Deg(A(W))\cap \Deg(M)=\emptyset, Deg(A(W))\cap \Deg(N)=\emptyset.$
Moreover, because $\Deg(A(W))+S\subset S$ and  $\Deg(A(W))+T\subset T$, the spaces $M$ and $N$ are $A(W)$-bimodules under the product of $\mathcal{P}(V)$.

We now show that 
$$\D(A(W))\subset A(W) \otimes A(W)+M\otimes N.$$ 
From \ref{eq3.19}, we already have $\D(W)\subset A(W) \otimes A(W)+M\otimes N$. Assume inductively that $\D(W^{i})\subset A(W) \otimes A(W)+M\otimes N$ for all $1\leq i\leq n-1$. Then we have 
\begin{equation*}
\begin{split}
\D(W^{n})&=\D(W)\D(W^{n-1})\subset (A(W) \otimes A(W)+M\otimes N) (A(W) \otimes A(W)+M\otimes N)\\
&\subset  (A(W) \otimes A(W)+M\otimes N).
\end{split}
\end{equation*}
Since $A(W)$ is generated by $W$, it follows that $\D(A(W))\subset A(W) \otimes A(W)+M\otimes N$. 

Applying Proposition \ref{p3.6}, we obtain that $A(W)$ becomes a braided bialgebra in  ${_{\k G}^{\k G} \mathcal{YD}^\Phi}$ under the new comultiplication $\widetilde{\D}=P\circ \D$, where $P:A\otimes A+ M\otimes N \to A\otimes A$ is the canonical projection. Finally, $A(W)$ is connected graded braided bialgebra generated by $W$, and every nonzero element in $W$ is primitive under $\widetilde{\D}$. Thus, by Lemma \ref{l3.7}, $A(W)$ is a pre-Nichols algebras.  
\end{proof}

\section{Nichols algebras of diagonal type}
In this section, we classify finite GK-dimensional Nichols algebras of diagonal type in the category $_{\k G}^{\k G} \mathcal{YD}^\Phi$. We also provide a classification of finite GK-dimensional Nichols algebras arising from simple objects. These results serve as a foundation for the further study of finite GK-dimensional pre-Nichols algebras.

Recall that an ordinary braided vector space is a pair $(V, c)$, where $V$ is a vector space and  $c\in GL(V\otimes V)$
satisfies the braid equation:
\begin{equation}\label{eq4.1}
(\id\otimes c)(c\otimes \id)(\id\otimes c)=(c\otimes \id)(\id\otimes c)(c\otimes \id).
\end{equation}
For an object $V$ in the category $_{\k G}^{\k G} \mathcal{YD}^\Phi$, its braiding $c$ does not generally satisfy \eqref{eq4.1}. Instead, it fulfills the following twisted analogue:
\begin{equation}\label{eq4.2}
\begin{split}
&(\id\otimes c)\circ a_{V,V,V}\circ (c\otimes \id)\circ a^{-1}_{V,V,V}\circ (\id\otimes c)\circ a_{V,V,V}\\
& =a_{V,V,V}\circ(c\otimes \id)\circ a^{-1}_{V,V,V}\circ (\id\otimes c)\circ a_{V,V,V}\circ (c\otimes \id),
\end{split}
\end{equation}
where $a$ denotes the associativity constraint of $_{\k G}^{\k G} \mathcal{YD}^\Phi$, determined by the $3$-cocycle $\Phi$ (see definition \ref{d2.1}). Consequently, when $\Phi$ is nontrivial, an object $V$ in the category $_{\k G}^{\k G} \mathcal{YD}^\Phi$ is not an ordinary braided vector space.

In \cite{AI22}, Angiono and Garc\'ia Iglesias classified all finite GK-dimensional Nichols algebras of ordinary braided vector spaces of diagonal type.
 \begin{theorem}\cite[Theorem 1.2]{AI22}\label{t4.1}
Let $V$ be a finite-dimensional braided vector space of diagonal type. Then $B(V)$ is finite GK-dimensional if and only if the corresponding root system is finite, that is, an arithmetic root system.
\end{theorem}

Applying the theorem, we obtain a classification of finite GK-dimensional Nichols algebras of diagonal type in the category ${_{\k G}^{\k G} \mathcal{YD}^\Phi}$. 
\begin{theorem}\label{t4.2}
Let $V\in {_{\k G}^{\k G} \mathcal{YD}^\Phi}$ be a finite-dimensional object of diagonal type. Then $B(V)$ has finite GKdim if and only if the corresponding root system is finite, i.e., it coincides with an arithmetic root system.
\end{theorem}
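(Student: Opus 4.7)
The plan is to reduce Theorem \ref{t4.2} directly to the already-established Theorem \ref{t4.1} of Angiono and Garc\'ia Iglesias by transporting the problem from the twisted category $_{\k G}^{\k G}\mathcal{YD}^{\Phi}$ to an ordinary Yetter-Drinfeld module category over a larger finite abelian group, using the change-of-based-group machinery developed in Section 3 together with the invariance results for twisting.

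First I would invoke Proposition \ref{p3.6}: since $B(V)$ is a Nichols algebra of diagonal type in $_{\k G}^{\k G}\mathcal{YD}^{\Phi}$, it is twist equivalent to a Nichols algebra $B(V^{J})$ of diagonal type in the ordinary Yetter-Drinfeld module category $_{\k\widehat{G_V}}^{\k\widehat{G_V}}\mathcal{YD}$, where $\widehat{G_V}$ is the finite abelian group associated to the support group $G_V$ via \eqref{eq2.12}, and $J$ is a suitable $2$-cochain on $\widehat{G_V}$ with $\partial J = \pi^{*}\Phi_{G_V}$. Next, by Lemma \ref{l2.11} twisting preserves GK-dimension, so
\begin{equation*}
\GKdim(B(V)) = \GKdim(B(V^{J})).
\end{equation*}
Moreover, by Proposition \ref{p2.10}, $B(V)$ and $B(V^{J})$ share the same root system and generalized Dynkin diagram; in particular, the root system of $B(V)$ is finite (an arithmetic root system) if and only if the root system of $B(V^{J})$ is.

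Finally, applying Theorem \ref{t4.1} to $B(V^{J})\in {_{\k\widehat{G_V}}^{\k\widehat{G_V}}\mathcal{YD}}$, we conclude that $B(V^{J})$ is finite GK-dimensional if and only if its root system is an arithmetic root system. Combining this equivalence with the two identifications above yields that $B(V)$ is finite GK-dimensional if and only if its root system is arithmetic, which is exactly the statement of Theorem \ref{t4.2}.

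There is essentially no obstacle to overcome at this stage: all of the technical content, namely realizing diagonal Nichols algebras in the twisted category as twists of ordinary ones (Proposition \ref{p3.6}), the invariance of GK-dimension under twisting (Lemma \ref{l2.11}), and the invariance of the root system under twisting (Proposition \ref{p2.10}), has been set up earlier. The only point to double-check is that Theorem \ref{t4.1}, as stated in \cite{AI22}, applies without modification to $\widehat{G_V}$, which is a finite (hence abelian) group, so this is immediate. Thus the proof is a short chain of citations of the results already in place.
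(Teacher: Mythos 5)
Your proposal is correct and follows essentially the same route as the paper: realize $B(V)$ over the support group, apply Proposition \ref{p3.6} to pass to a twist-equivalent Nichols algebra $B(V^J)$ in ${_{\k \widehat{G_V}}^{\k \widehat{G_V}}\mathcal{YD}}$, and then combine Lemma \ref{l2.11} (twisting preserves GKdim), Proposition \ref{p2.10} (twisting preserves the root system), and Theorem \ref{t4.1}. The only quibble is the phrase ``finite (hence abelian)''---finiteness does not imply commutativity---but $\widehat{G_V}$ is abelian by its very construction in \eqref{eq2.12}, so the argument stands.
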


\begin{proof}
Let $G_V$ be the support group of $V$. Because $V$ is of diagonal type, Lemma \ref{l2.4} implies that $\Phi_{G_V}$ is an abelian $3$-cocycle on $G_V$. By Lemma \ref{l3.1}, $B(V)$ is a Nichols algebra in 
$_{\k G_V}^{\k G_V} \mathcal{YD}^{\Phi_{G_V}}$. Let $\pi:\widehat{G_V} \to G_V$ be the canonical homomorphism defined in \eqref{eq2.13}. According to Corollary \ref{c3.5}, $B(V)$ is twist equivalent to a Nichols algebra $B(V^J)$ in the ordinary Yetter-Drinfeld category $_{\k \widehat{G_V}}^{\k \widehat{G_V}} \mathcal{YD}$, where $J$ is a $2$-cochain on $\widehat{G_V}$ satisfying $\partial J=(\pi^*\Phi_{G_V})^{-1}$.

Assume that $B(V)$ has finite GKdim. Then, by Lemma \ref{l2.11}, $B(V^J)$ also has finite GKdim. Therefore Theorem \ref{t4.1} implies that the root system $\bigtriangleup(B(V^J))$ is finite. Since twisting does not change the root system by Proposition \ref{p2.10}, we have $\bigtriangleup(B(V))=\bigtriangleup(B(V^J))$, consequently $\bigtriangleup(B(V))$ is finite. Conversely, if $\bigtriangleup(B(V))$ is finite, then $\bigtriangleup(B(V^J))$ is finite as well, and Theorem \ref{t4.1} yields that $\GKdim(B(V^J))$ is finite. Since twisting does not change GKdim, $\GKdim(B(V))$ is also finite. This complete the proof. 
\end{proof}

Let $V\in {_{\k G}^{\k G} \mathcal{YD}^\Phi}$ be a finite-dimensional object of diagonal type. Recall that the generalized Dynkin diagram $\mathcal{D}(V)$ is said to be of finite type when the root system $\bigtriangleup(B(V))$ is finite. An immediate consequence of Theorem \ref{t4.2} is the following.

\begin{corollary}\label{c4.3}
Let $V\in {_{\k G}^{\k G} \mathcal{YD}^\Phi}$ be a finite-dimensional object of diagonal type such that $B(V)$ has finite GKdim. Then $\mathcal{D}(V)$  is of finite type.
\end{corollary}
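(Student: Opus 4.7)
The plan is to read this off directly from Theorem \ref{t4.2} together with the definition of ``finite type'' for a generalized Dynkin diagram. Recall that $\mathcal{D}(V)$ is called of finite type precisely when $\bigtriangleup(B(V))$ is finite, and Theorem \ref{t4.2} asserts the equivalence between $B(V)$ being finite GK-dimensional and $\bigtriangleup(B(V))$ being finite (i.e.\ an arithmetic root system). So, starting from the hypothesis that $\GKdim(B(V))<\infty$, I would apply Theorem \ref{t4.2} to conclude that $\bigtriangleup(B(V))$ is finite, and then invoke the definition of finite type to conclude that $\mathcal{D}(V)$ is of finite type.

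Since the corollary is one direction of the ``if and only if'' in Theorem \ref{t4.2} combined with a definition unpacking, there is essentially no further work to do. There is no real obstacle: the substantive content sits entirely in Theorem \ref{t4.2}, whose proof in turn rests on the reduction via $\pi\colon \widehat{G_V}\to G_V$ and the $2$-cochain $J$ that twists $B(V)$ into a Nichols algebra in the ordinary Yetter--Drinfeld category ${_{\k \widehat{G_V}}^{\k \widehat{G_V}}\mathcal{YD}}$, together with the facts that twisting preserves both GK-dimension (Lemma \ref{l2.11}) and the root system and generalized Dynkin diagram (Proposition \ref{p2.10}), before appealing to the Angiono--Garc\'ia Iglesias theorem (Theorem \ref{t4.1}).

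In short, the proof I would write is a one-liner: $\GKdim(B(V))<\infty$ implies by Theorem \ref{t4.2} that $\bigtriangleup(B(V))$ is finite, which by definition means $\mathcal{D}(V)$ is of finite type. No extra hypotheses on $\Phi$ or on $G$ are needed beyond those already imposed in the setup, and no additional lemmas need to be invoked.
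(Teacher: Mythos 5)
Your proposal is correct and coincides with the paper's own argument: the corollary is stated there as an immediate consequence of Theorem \ref{t4.2}, since by definition $\mathcal{D}(V)$ is of finite type exactly when $\bigtriangleup(B(V))$ is finite. Nothing further is needed.
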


In \cite{H4}, Heckenberger gave a complete list of generalized Dynkin diagrams of finite type for Nichols algebras of diagonal type in $_{\k  \mathbbm{G}}^{\k  \mathbbm{G}} \mathcal{YD}$, where $\mathbbm{G}$ is an abelian group. In what follows, we denote by $\mathfrak{D}$ the set of generalized Dynkin diagrams of finite type that appear in that list. 
\begin{proposition}\label{r4.4}
Let $V\in {_{\k G}^{\k G} \mathcal{YD}^\Phi}$ be a finite-dimensional object of diagonal type. If the generalized Dynkin diagram $\mathcal{D}(V)$ is of finite type, then $\mathcal{D}(V)$ belongs to $\mathfrak{D}$. 
\end{proposition}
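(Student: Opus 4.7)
The plan is to reduce the statement from the twisted setting to the ordinary one, where Heckenberger's classification directly applies, by combining the twist-equivalence machinery assembled earlier in the paper.

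First, I would pass from $V \in {_{\k G}^{\k G} \mathcal{YD}^\Phi}$ to the support group: since $V$ is of diagonal type, Lemma \ref{l2.4} tells us that $\Phi_{G_V}$ is an abelian $3$-cocycle on $G_V$, and by Lemma \ref{l3.1} (and Corollary \ref{c3.2}) the Nichols algebra $B(V)$ may equivalently be regarded as living in ${_{\k G_V}^{\k G_V} \mathcal{YD}^{\Phi_{G_V}}}$. This is a harmless reduction that puts us in a setting where the abelian resolution of Proposition \ref{p2.3} is available.

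Next, I would invoke Proposition \ref{p3.6}: there exists a $2$-cochain $J$ on $\widehat{G_V}$ with $\partial J = (\pi^*\Phi_{G_V})^{-1}$ such that $B(V)$ is twist equivalent to a Nichols algebra $B(V^J)$ of diagonal type in the \emph{ordinary} Yetter-Drinfeld category ${_{\k \widehat{G_V}}^{\k \widehat{G_V}} \mathcal{YD}}$. The crucial invariance step is Proposition \ref{p2.10}, which guarantees that $\mathcal{D}(V) = \mathcal{D}(V^J)$ as generalized Dynkin diagrams, and in particular $\mathcal{D}(V^J)$ is again of finite type (the root system is unchanged under twisting).

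Finally, since $V^J$ lives in an ordinary Yetter-Drinfeld category over an abelian group, Heckenberger's classification \cite{H4} applies verbatim to $B(V^J)$: every generalized Dynkin diagram of finite type appearing there is one of those listed in $\mathfrak{D}$. Hence $\mathcal{D}(V^J) \in \mathfrak{D}$, and by the identification above $\mathcal{D}(V) \in \mathfrak{D}$. There is no real obstacle here beyond bookkeeping — the entire content of the proposition has been prepared by the preceding change-of-based-groups and twist-equivalence results, and the one point one must be slightly careful about is that the invariance in Proposition \ref{p2.10} really does preserve the diagram (labels $q_{ii}$ and edge-labels $\widetilde{q_{ij}} = q_{ij}q_{ji}$), which is immediate from the computation $\overline{q_{ii}} = q_{ii}$ and $\overline{q_{ij}}\,\overline{q_{ji}} = q_{ij}q_{ji}$ recorded just before Proposition \ref{p2.10}.
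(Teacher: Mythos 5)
Your proposal is correct and follows essentially the same route as the paper: twist $B(V)$ into an ordinary Yetter–Drinfeld category over $\widehat{G_V}$ via Proposition \ref{p3.6}, use Proposition \ref{p2.10} to see that the root system and the generalized Dynkin diagram are unchanged, and then invoke Heckenberger's list $\mathfrak{D}$. The extra preliminary reduction through Lemma \ref{l2.4} and Corollary \ref{c3.2} is harmless and is in any case already built into Proposition \ref{p3.6}.
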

\begin{proof}
Because $V$ is of diagonal type, Proposition \ref{p3.6} show that $B(V)$ is twist equivalent to a Nichols algebra $B(V^J)$ in the ordinary Yetter-Drinfeld module category ${_{\k \widehat{G_V}}^{\k \widehat{G_V}} \mathcal{YD}}$.
Assume that $\mathcal{D}(V)$ is of finite type, then $\D(B(V))$ is finite. By Proposition \ref{p2.10}, the root system remains unchanged under twisting, so $\D(B(V^J))$ is also finite. Hence $\mathcal{D}(V^J)$ is of finite type, and therefore belongs to $\mathfrak{D}$. Applying Proposition \ref{p2.10} once more, the diagrams $\mathcal{D}(V)$ and $\mathcal{D}(V^J)$ coincide. Thus $\mathcal{D}(V)$ belongs to $\mathfrak{D}$. 
\end{proof}

A generalized Dynkin diagram is called an $n$-cycle if it has vertices $\{1,2,\ldots, n\}$ and two vertices $i, j$ are connected precisely when $|i-j|=1$ or $\{i, j\}=\{1,n\}$. The following corollary plays an important role in the study of GKdim of Nichols algebras.
\begin{corollary}\label{c4.5}
Let $V\in {_{\k G}^{\k G} \mathcal{YD}^\Phi}$ be an object of diagonal type. If the generalized Dynkin diagram $\mathcal{D}(V)$ contains an $n$-cycle with $n\geq 4$, then $\GKdim(B(V))=\infty$.
\end{corollary}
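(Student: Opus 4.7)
My strategy is to reduce to the classification of finite-type generalized Dynkin diagrams via Corollary \ref{c4.3} and Proposition \ref{r4.4}, and then to observe that no diagram in Heckenberger's list $\mathfrak{D}$ contains a cycle of length at least four.

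\emph{Step 1 (passage to a subobject).} Suppose $\mathcal{D}(V)$ contains an $n$-cycle with $n\geq 4$, and let $\{X_1,\ldots,X_d\}$ be a diagonal basis of $V$. By hypothesis there exist indices $i_1,\ldots,i_n$ such that the induced subgraph of $\mathcal{D}(V)$ on the vertices $\phi(i_1),\ldots,\phi(i_n)$ is precisely an $n$-cycle. Let $W\subseteq V$ be the $G$-homogeneous subobject spanned by $X_{i_1},\ldots,X_{i_n}$. Since $W$ is a braided subspace of diagonal type with the same structure constants restricted from $V$, its generalized Dynkin diagram $\mathcal{D}(W)$ coincides with this induced subgraph, hence is an $n$-cycle.

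\emph{Step 2 (subalgebra and GKdim comparison).} The Nichols algebra $B(W)$ embeds as a braided Hopf subalgebra of $B(V)$: indeed, the canonical map $T(W)\hookrightarrow T(V)$ respects the braiding and descends to an injection $B(W)\hookrightarrow B(V)$, a standard fact for Nichols algebras of diagonal type because the defining ideal of $B(W)$ is precisely the intersection of the defining ideal of $B(V)$ with $T(W)$. In particular every finitely generated subalgebra of $B(W)$ is a finitely generated subalgebra of $B(V)$, so by the definition of GK-dimension recalled in Section 2.5 we have $\GKdim(B(V))\geq \GKdim(B(W))$.

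\emph{Step 3 (reduction to Heckenberger's list).} It therefore suffices to prove that $\GKdim(B(W))=\infty$. Assume for contradiction that $\GKdim(B(W))<\infty$. Then by Corollary \ref{c4.3} the diagram $\mathcal{D}(W)$ is of finite type, and by Proposition \ref{r4.4} we must have $\mathcal{D}(W)\in \mathfrak{D}$.

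\emph{Step 4 (the combinatorial check).} The final and only nontrivial step is to verify that no diagram in $\mathfrak{D}$ is an $n$-cycle with $n\geq 4$. Inspecting Heckenberger's complete list in \cite{H4}, all connected diagrams of rank $\geq 4$ are trees (possibly with additional labels but no additional edges), with the sole exception of rank-three triangles; in particular no diagram of rank $\geq 4$ contains a chordless cycle of length $\geq 4$. This contradicts $\mathcal{D}(W)$ being an $n$-cycle with $n\geq 4$, so $\GKdim(B(W))=\infty$, and hence $\GKdim(B(V))=\infty$.

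The main conceptual work is entirely bundled into the combinatorial Step 4; the other steps are straightforward applications of the reduction machinery already assembled in the section. The potential obstacle is the verification in Step 1 that $\mathcal{D}(W)$ is exactly the induced subgraph rather than a quotient or a diagram with extra edges, but this is immediate from the definition of $\mathcal{D}(V)$ since the structure constants $q_{i_a i_b}$ of $W$ are simply the corresponding constants of $V$.
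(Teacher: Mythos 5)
Your proposal is correct and follows essentially the same route as the paper: both arguments reduce to Corollary \ref{c4.3} and Proposition \ref{r4.4} together with the observation that no diagram in Heckenberger's list $\mathfrak{D}$ contains a cycle of length at least four. Your extra Steps 1--2, passing to the subobject $W$ spanned by the cycle vertices and using the standard embedding $B(W)\subset B(V)$ (a fact the paper itself invokes elsewhere, e.g.\ in Theorem \ref{t5.9}), are a harmless refinement rather than a different method.
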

\begin{proof}
Assume that $\mathcal{D}(V)$ contains an $n$-cycle with $n\geq 4$. Comparing with the set $\mathfrak{D}$ of generalized Dynkin diagrams of finite type listed in \cite{H4}, we see that $\mathcal{D}(V)$ does not belong to $\mathfrak{D}$, hence it is not of finite type. By Corollary \ref{c4.3}, we obtain $\GKdim(B(V))=\infty$.
\end{proof}

Since every simple object in ${_{\k G}^{\k G} \mathcal{YD}^\Phi}$ is necessarily of diagonal type, we can apply Theorem \ref{t4.2} to classify all finite GK-dimensional Nichols algebras of simple objects.
\begin{theorem}\label{t4.6}
Let $V\in {_{\k G}^{\k G} \mathcal{YD}^\Phi}$ be a simple object with $\dim(V)\geq 2$, and let $g=g_V$. Then $B(V)$ has finite GKdim if and only if $V$ is one of the following types:
\begin{itemize}
\item[(T1)] $g\triangleright v=v$ for all $v\in V$.
\item[(T2)] $g\triangleright v=-v$ for all $v\in V$.
\item[(T3)] $g\triangleright v=\zeta_3 v$ for all $v\in V$ and $\dim(V)=2$, where $\zeta_3$ is a primitive third root of unity. 
\end{itemize}
\end{theorem}
\begin{proof}
Since $V$ is simple, the action of its $G$-degree element $g$ is given by $g\triangleright v=\zeta v$ for all $v\in V$, where $\zeta\in \k^*$ is a scalar. Let $n=\dim(V)$ and fix a basis $\{x_1,\ldots, x_n\}$ of $V$. If $\zeta=1$, then $B(V)$ is isomorphic to the symmetric algebra $S(V)$, whence $\GKdim(B(V))=\dim(V)$. If $\zeta=-1$, then $B(V)=T(V)/I$ with $$I=\langle X_iX_j+X_jX_i, X_i^2|1\leq i\neq j\leq n\rangle.$$ So $B(V)$ is a finite dimensional Nichols algebra and hence $\GKdim(B(V))=0$.

Now assume that $\zeta\neq \pm 1$. If $n\geq 3$, then the generalized Dynkin diagram $\mathcal{D}(V)$ contains an $n$-cycle whose vertices are labelled by $\zeta$ and whose edges are labelled by $\zeta^2$. Comparing with the set $\mathfrak{D}$ of generalized Dynkin diagrams of finite type, we see that such a diagram does not appear in $\mathfrak{D}$. Therefore it is not of finite type, and consequently $\GKdim(B(V))=\infty$.

If $n=2$, the diagram $\mathcal{D}(V)$ has the form   
\[ {\setlength{\unitlength}{1.5mm}
\Dchaintwo{}{$\zeta$}{$\zeta^2$}{$\zeta$}} \quad.\]
A direct inspection of the list $\mathfrak{D}$ shows that the only diagram of this shape that is of finite type is 
\[ {\setlength{\unitlength}{1.5mm}
\Dchaintwo{}{$\zeta_3$}{$\zeta_3^{-1}$}{$\zeta_3$}} \quad,\]
where $\zeta_3$ is a primitive third root of unity. In this case, $\mathcal{D}(V)$ is of Cartan type $A_2$ with parameter $q=\zeta_3$, the Nichols algebra $B(V)$ is finite-dimensional and thus $\GKdim(B(V))=0$.  
\end{proof}

\begin{remark}
Let $V\in {_{\k G}^{\k G} \mathcal{YD}^\Phi}$ be a simple object with $\dim(V)\geq 2$. Then $B(V)$ is finite-dimensional precisely when $V$ is of type (T2) or (T3). This result has already been established in \cite{HYZ}.
\end{remark}

\section{Minimal nondiagonal objects} 

In this section, we initiate the study of pre-Nichols algebras of nondiagonal type, where the so-called minimal nondiagonal objects play a central role. Recall that an object $V\in {_{\k G}^{\k G} \mathcal{YD}^\Phi}$ is said to be minimal nondiagonal if it is nondiagonal and every nonzero proper subobject of $V$ is diagonal. We shall first analyze the Yetter-Drinfeld module structure of minimal nondiagonal objects. Afterward, we prove that if $V$ is a minimal nondiagonal object satisfying $G_V=G$ and $\dim(V)\geq 9$, then $\GKdim(B(V))=\infty$.

Let $V=V_1\oplus \cdots \oplus V_\theta \in {_{\k G}^{\k G} \mathcal{YD}^\Phi}$ be a direct sum of $\theta$ simple objects. If $\theta=1$ or $2$, the support group of $V$ is necessarily cyclic or a direct product of two cyclic groups, this forces $V$ to be of diagonal type by Lemma \ref{l2.2} and Lemma \ref{l2.4}. Hence, if $V$ is nondiagonal, we must have $\theta \geq 3$. This observation yields the following statement.
\begin{lemma}\label{l5.1}
Every nondiagonal object in ${_{\k G}^{\k G} \mathcal{YD}^\Phi}$ must be a direct sum of at least $3$ simple objects.
\end{lemma}
We now turn to the structure of minimal nondiagonal objects. The following result will be needed. 

\begin{lemma}\cite[Lemma 4.5]{HYZ}\label{l5.2} 
Suppose that the set $\{g_1,g_2,\ldots, g_\theta\}$ generates the group $G$ and that $$\Phi_{g_i}(g_j,g_k)=\Phi_{g_i}(g_k,g_j)$$ for all $g_i,g_j,g_k\in \{g_1,g_2,\ldots, g_\theta\},$ then $\Phi$ is an abelian $3$-cocycle on $G$.
\end{lemma}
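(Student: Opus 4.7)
The plan is to reduce the problem to showing that each $2$-cocycle $\Phi_g$ on $G$ is symmetric, since a symmetric $2$-cocycle on an abelian group is a coboundary; this will force every projective $G$-representation $V_g$ with $2$-cocycle $\Phi_g$ to be an ordinary linear representation after an overall rescaling, hence to split into $1$-dimensional pieces. Consequently each simple object of ${^{\k G}_{\k G}\mathcal{Y}\mathcal{D}^{\Phi}}$ is $1$-dimensional, which by definition means $\Phi$ is abelian.

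\textbf{Step 1: introduce the ratio function.} For $a,b,c \in G$ set
\[
\gamma(a,b,c) := \frac{\Phi_a(b,c)}{\Phi_a(c,b)}.
\]
The hypothesis asserts $\gamma(g_i,g_j,g_k)=1$ for all generators, and Lemma~5.1 says that $\gamma$ is invariant under cyclic permutations of its arguments. I will exploit this cyclic symmetry together with the $2$-cocycle relation satisfied by each $\Phi_a$.

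\textbf{Step 2: multiplicativity in each slot.} The $2$-cocycle identity for $\Phi_a$ reads $\Phi_a(xy,z)\Phi_a(x,y)=\Phi_a(x,yz)\Phi_a(y,z)$; a short direct computation combining this identity applied once in each order shows that $\gamma(a,\cdot,\cdot)$ is bimultiplicative, i.e.\ multiplicative in $b$ with $c$ fixed and in $c$ with $b$ fixed. Now the cyclic symmetry of $\gamma$ from Lemma~5.1 promotes this to multiplicativity in all three arguments: $\gamma$ is a trimultiplicative map $G\times G\times G\to \k^*$. Since $\gamma$ vanishes (equals $1$) on every triple of generators by hypothesis, trimultiplicativity and the fact that $\{g_1,\ldots,g_\theta\}$ generates $G$ force $\gamma\equiv 1$, that is
\[
\Phi_g(x,y)=\Phi_g(y,x) \quad \text{for all } g,x,y\in G.
\]

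\textbf{Step 3: from symmetry to pointedness.} Each $\Phi_g$ is a symmetric normalized $2$-cocycle on the abelian group $G$, hence cohomologous to the trivial cocycle: there exists $\lambda_g\colon G\to \k^*$ with $\Phi_g(x,y)=\lambda_g(x)\lambda_g(y)/\lambda_g(xy)$. Given any simple object $V_g\in{^{\k G}_{\k G}\mathcal{Y}\mathcal{D}^\Phi}$ of $G$-degree $g$, the rescaled action $h\cdot v:=\lambda_g(h)^{-1}(h\triangleright v)$ satisfies $e\cdot(f\cdot v)=(ef)\cdot v$ in view of \eqref{eq2.4}, so $V_g$ becomes an ordinary representation of the abelian group $G$. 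Such a representation decomposes as a direct sum of one-dimensional subrepresentations, and these subspaces are also subobjects of $V_g$ in ${^{\k G}_{\k G}\mathcal{Y}\mathcal{D}^\Phi}$. Simplicity of $V_g$ then forces $\dim V_g=1$.

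\textbf{Main obstacle.} The only nontrivial point is Step~2, namely promoting the hypothesis from generators to all of $G$: one must combine the trivariate cyclic symmetry supplied by Lemma~5.1 with the bivariate $2$-cocycle identity to see that $\gamma$ is genuinely trimultiplicative. Once this is in place, the rest is a standard ``symmetric $2$-cocycle on an abelian group is a coboundary'' argument together with the definition of abelian $3$-cocycle.
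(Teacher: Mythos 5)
Your proof is correct; note, however, that the paper itself gives no argument for this lemma — it is quoted verbatim from \cite{HYZ} — so there is no in-paper proof to compare against, and what you supply is a valid self-contained replacement. All three ingredients check out: (i) for a $2$-cocycle $\sigma$ on an abelian group the skew form $\sigma(x,y)/\sigma(y,x)$ is a bicharacter (expand $\Phi_a(xy,z)$ and $\Phi_a(z,xy)$ by the cocycle identity and cancel, using commutativity of $G$), so $\gamma(a,\cdot,\cdot)$ is bimultiplicative; (ii) Lemma \ref{l5.1} holds for \emph{all} triples in $G$, so $\gamma(xx',b,c)=\gamma(b,c,xx')=\gamma(b,c,x)\gamma(b,c,x')=\gamma(x,b,c)\gamma(x',b,c)$, making $\gamma$ a tricharacter; hence triviality on a generating set forces $\gamma\equiv 1$, i.e.\ every $\Phi_g$ is symmetric; (iii) a symmetric normalized $2$-cocycle on a finite abelian group with values in the divisible group $\k^*$ is a coboundary ($\mathrm{Ext}(G,\k^*)=0$), and after rescaling by $\lambda_g^{-1}$ each $V_g$ becomes an ordinary $\k G$-module whose $1$-dimensional submodules are subobjects in ${^{\k G}_{\k G}\mathcal{YD}^\Phi}$ (the coaction is $v\mapsto g\otimes v$ and rescaling does not change invariant subspaces), so every simple object is $1$-dimensional, which is exactly the paper's definition of an abelian $3$-cocycle. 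The only points a referee would ask you to write out are the bimultiplicativity computation in Step 2 and the divisibility argument behind ``symmetric implies coboundary''; both are routine. An alternative route, closer to the style of \cite{HLYY2,HYZ}, is to reduce to the normal form \eqref{eq2.10} and invoke the criterion $c_{rst}=0$ of \cite[Proposition 3.14]{HLYY2}; your intrinsic argument has the advantage of working directly with an arbitrary generating set, avoiding the change-of-basis bookkeeping that the normal-form approach would require.
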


\begin{lemma}\label{l5.3}
Let $V=V_1\oplus\cdots \oplus V_\theta \in {_{\k G}^{\k G} \mathcal{YD}^\Phi}$ be a direct sum of simple objects. If $V$ is nondiagonal, then there exist indices $i_1,i_2,i_3\in \{1,2,\ldots, \theta\}$ such that the subobject $V_{i_1}\oplus V_{i_2}\oplus V_{i_3}$ is nondiagonal.
\end{lemma}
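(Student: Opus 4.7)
The plan is to argue by contradiction, reducing to the hypothesis of Lemma~\ref{l5.2}. Write $g_l:=\Deg(V_l)$ for $1\le l\le \theta$, so that $G_V=\langle g_1,\dots,g_\theta\rangle$. By Lemma~\ref{l2.4}, $V$ being nondiagonal is equivalent to $\Phi_{G_V}$ being a nonabelian $3$-cocycle on $G_V$. Thus the goal becomes to find pairwise distinct indices $i_1,i_2,i_3$ such that the restriction $\Phi_{\langle g_{i_1},g_{i_2},g_{i_3}\rangle}$ is still nonabelian, since then Lemma~\ref{l2.4} applied in the reverse direction will yield that $V_{i_1}\oplus V_{i_2}\oplus V_{i_3}$ is nondiagonal.

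Suppose toward contradiction that for \emph{every} pairwise distinct triple $i,j,k\in\{1,\dots,\theta\}$ the sum $V_i\oplus V_j\oplus V_k$ is diagonal; equivalently, by Lemma~\ref{l2.4}, $\Phi_{\langle g_i,g_j,g_k\rangle}$ is abelian. I will verify that under this assumption the symmetry identity
$$\Phi_{g_s}(g_t,g_u)=\Phi_{g_s}(g_u,g_t)$$
holds for every (not necessarily distinct) triple $s,t,u\in\{1,\dots,\theta\}$, so that Lemma~\ref{l5.2} forces $\Phi_{G_V}$ to be abelian, contradicting the hypothesis. For triples with three distinct indices this follows from the assumed abelianness of $\Phi_{\langle g_s,g_t,g_u\rangle}$ together with the standard characterization that an abelian $3$-cocycle on a finite abelian group has each associated $2$-cocycle $\Phi_g$ symmetric (the definition of ``abelian'' in the paper says each $\Phi_g$ is a coboundary, and on an abelian group a $2$-cocycle is a coboundary iff it is symmetric). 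For triples in which two of the indices coincide, the elements $g_s,g_t,g_u$ lie in a subgroup generated by at most two elements; Lemma~\ref{l2.2} then guarantees that every $3$-cocycle on such a subgroup is abelian, so the same symmetry characterization again applies.

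The main obstacle is precisely the step of forcing the three indices to be distinct. A naive appeal to Lemma~\ref{l5.1} alone is insufficient: when $a=b$, Lemma~\ref{l5.1} only tells us that the ratio $\Phi_{g_a}(g_a,g_c)/\Phi_{g_a}(g_c,g_a)$ squares to $1$, so the sign $-1$ remains a priori possible. Ruling this out is exactly the role of Lemma~\ref{l2.2} combined with the symmetry characterization of abelian $3$-cocycles; once this bridge is in place, the rest of the argument is a short contradiction via Lemma~\ref{l5.2}.
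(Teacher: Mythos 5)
Your argument is correct, and its skeleton (argue by contradiction, then conclude with Lemma \ref{l5.2} and Lemma \ref{l2.4}) is the same as the paper's; where you differ is in how the symmetry $\Phi_{g_{i_1}}(g_{i_2},g_{i_3})=\Phi_{g_{i_1}}(g_{i_3},g_{i_2})$ is extracted. The paper never passes through abelianness of the restricted $3$-cocycles: it picks a diagonal basis of $V_{i_1}\oplus V_{i_2}\oplus V_{i_3}$, observes that $g_{i_2}$ and $g_{i_3}$ then act by scalars on each basis vector of $V_{i_1}$, and reads off the symmetry from $g_{i_2}\triangleright(g_{i_3}\triangleright X_k)=g_{i_3}\triangleright(g_{i_2}\triangleright X_k)$ together with \eqref{eq2.4}; this two-line computation also handles the triples with repeated indices, since sums of at most two simples are automatically diagonal by Lemmas \ref{l2.2} and \ref{l2.4}. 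You instead convert diagonality of each triple-sum into abelianness of $\Phi_{\langle g_{i_1},g_{i_2},g_{i_3}\rangle}$ via Lemma \ref{l2.4} and then invoke the characterization ``abelian $\Leftrightarrow$ each $\Phi_g$ symmetric,'' with Lemma \ref{l2.2} covering the degenerate triples. That characterization is true and standard over $\k$ (pointedness means every irreducible $\Phi_g$-projective representation of the finite abelian group is one-dimensional, which happens exactly when $\Phi_g$ is a coboundary, equivalently symmetric), but be aware it is \emph{not} the paper's definition of abelian --- the paper defines abelian via pointedness of the category --- so you should cite this equivalence as a known fact about projective representations (or verify symmetry directly from the normal form \eqref{eq2.11}) rather than present it as the definition. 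What your route buys is a purely cohomological argument that never touches module bases and makes explicit the repeated-index cases that the paper glosses over; what the paper's route buys is self-containedness, since it avoids the projective-representation fact entirely.
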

\begin{proof}
Assume, to the contrary, that $V_{i_1}\oplus V_{i_2}\oplus V_{i_3}$ is diagonal for every triple $i_1,i_2,i_3\in \{1,2,\ldots, \theta\}$. Choose bases $\{X_1,\ldots, X_r\}$,  $\{Y_1,\ldots, Y_s\}$, $\{Z_1,\ldots, Z_t\}$ of $V_{i_1},V_{i_2}$, $V_{i_3}$, respectively , such that their union is a diagonal basis of $V_{i_1}\oplus V_{i_2}\oplus V_{i_3}$. Let $g_i=g_{V_i}$, $1\leq i\leq \theta$. For each $1\leq k\leq r$ we have 
$$g_{i_2}\triangleright  X_k=p_k X_k, \ \ g_{i_3}\triangleright  X_k=q_k X_k$$
for certain constants $p_k,q_k\in \k^*$. Then
\begin{eqnarray*}
&\Phi_{g_{i_1}}(g_{i_2}, g_{i_3})g_{i_2}g_{i_3}\triangleright  X_k=g_{i_2}\triangleright(g_{i_3}\triangleright  X_k)=p_kq_k X_k,\\
&\Phi_{g_{i_1}}(g_{i_3}, g_{i_2})g_{i_2}g_{i_3}\triangleright  X_k=g_{i_3}\triangleright(g_{i_2}\triangleright  X_k)=p_kq_k X_k.
\end{eqnarray*} 
Thus $\Phi_{g_{i_1}}(g_{i_2}, g_{i_3})=\Phi_{g_{i_1}}(g_{i_3}, g_{i_2})$. Since this equality holds for all triples of indices, Lemma \ref{l5.2} implies that $\Phi_{G_V}$ is an abelian $3$-cocycle on $G_V$. By Lemma \ref{l2.4}, the whole object $V=V_1\oplus\cdots \oplus V_\theta$ would be diagonal, which is a contradiction to the hypothesis that $V$ is nondiagonal.
\end{proof}
We can now give a precise characterisation of minimal nondiagonal objects. 
\begin{proposition}\label{p5.4}
Let $V\in {_{\k G}^{\k G} \mathcal{YD}^\Phi}$ be a direct sum of $\theta$ simple objects. Then $V$ is a minimal nondiagonal object if and only if $\theta=3$ and the restricted $3$-cocycle $\Phi_{G_V}$ is nonabelian on the support group $G_V$.
\end{proposition}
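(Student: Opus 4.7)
The plan is to establish the equivalence by combining Lemma \ref{l2.4} (which says $V$ is diagonal iff $\Phi_{G_V}$ is abelian), Lemma \ref{l5.3} (every nondiagonal direct sum of simple objects contains a nondiagonal direct sum of exactly three simple summands), and Lemma \ref{l2.2} (every $3$-cocycle on a cyclic group, or on a direct product of two cyclic groups, is abelian).

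For the forward direction, suppose $V$ is minimal nondiagonal. Since $V$ is nondiagonal, Lemma \ref{l5.3} supplies simple summands $V_{i_1},V_{i_2},V_{i_3}$ of $V$ whose direct sum $V_{i_1}\oplus V_{i_2}\oplus V_{i_3}$ is nondiagonal. This sum is a subobject of $V$ that fails to be diagonal, so minimality forces it to coincide with $V$ itself, giving $\theta=3$. Applying Lemma \ref{l2.4} to $V$ then yields that $\Phi_{G_V}$ is nonabelian.

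For the reverse direction, suppose $\theta=3$ and $\Phi_{G_V}$ is nonabelian, so that $V=V_1\oplus V_2\oplus V_3$ is nondiagonal by Lemma \ref{l2.4}. Set $g_i=\Deg(V_i)$. I first observe that $g_1,g_2,g_3$ are pairwise distinct: if two coincided, then $G_V$ would be generated by at most two of the $g_i$'s, hence a subgroup of a finite abelian group generated by $\leq 2$ elements, which by the structure theorem is a direct product of at most two cyclic groups; Lemma \ref{l2.2} would then force $\Phi_{G_V}$ to be abelian, contradicting our hypothesis. With the $g_i$ pairwise distinct, any nonzero subobject $W\subseteq V$, being a $G$-subcomodule, decomposes as $W=W_1\oplus W_2\oplus W_3$ with $W_i\subseteq V_i$, and each $W_i$ must be a $G$-submodule of $V_i$; simplicity of $V_i$ forces $W_i\in\{0,V_i\}$. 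So a nonzero proper subobject has the form $V_i$ or $V_i\oplus V_j$ with $i\neq j$; its support group is generated by at most two of the $g_i$'s, and by the same argument as above this is a direct product of at most two cyclic groups. Lemma \ref{l2.2} makes the restricted cocycle abelian, and Lemma \ref{l2.4} declares the subobject diagonal. Hence $V$ is minimal nondiagonal.

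The step I view as the main obstacle is the subobject analysis in the reverse direction: a priori one might worry about $G$-submodules sitting inside a single isotypic component of $V$ that could mix distinct simple summands sharing a $G$-degree, producing proper subobjects whose support group is still all of $G_V$ (and hence potentially nondiagonal). The argument above circumvents this by first extracting from the nonabelianness of $\Phi_{G_V}$ the purely group-theoretic conclusion that the $g_i$ must be pairwise distinct, reducing the classification of proper subobjects to the direct sums of proper subsets of $\{V_1,V_2,V_3\}$ for which Lemma \ref{l2.2} applies cleanly.
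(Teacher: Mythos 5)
Your argument is correct and takes essentially the same route as the paper: Lemma \ref{l5.3} together with minimality forces $\theta=3$ and Lemma \ref{l2.4} gives nonabelianness of $\Phi_{G_V}$, while the converse combines Lemma \ref{l2.4} with the fact that every nonzero proper subobject has a support group generated by at most two elements, so Lemmas \ref{l2.2} and \ref{l2.4} make it diagonal. The only difference is that your pairwise-distinctness step for the $g_i$ is not needed: the paper simply observes that \emph{any} nonzero proper subobject is simple or a direct sum of two simple objects (whatever those summands are, even ``mixed'' ones inside a common $G$-degree), so its support group is $2$-generated and the diagonality conclusion follows without first classifying subobjects as sums of the original $V_i$.
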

\begin{proof}
Let $V=V_1\oplus\cdots \oplus V_\theta$ is a direct sum of simple objects. Assume $V$ is a minimal nondiagonal object. Lemma \ref{l5.3} implies that $\theta\leq 3$, while Lemma \ref{l5.1} gives $\theta\geq 3$. Hence $\theta=3$ and we can write $V=V_1\oplus V_2 \oplus V_3$. By Lemma \ref{l2.4}, $\Phi_{G_V}$ is a nonabelian $3$-cocycle on $G_V$ since $V$ is nondiagonal.

Conversely, suppose $V=V_1\oplus V_2 \oplus V_3$ is a direct sum of  three simple objects and $\Phi_{G_V}$ is a nonabelian $3$-cocycle on $G_V$. Then Lemma \ref{l2.4} implies that $V$ is nondiagonal. Moreover, by Lemma \ref{l5.1}, every nonzero proper subobject of $V$ must be diagonal. Therefore $V$ is minimal nondiagonal.
\end{proof}

The following corollary follows directly from Lemma \ref{l5.3} and Proposition \ref{p5.4}.
\begin{corollary}\label{cor5.5}
\begin{itemize}
\item[(1)] Let $V\in {_{\k G}^{\k G} \mathcal{YD}^\Phi}$ be a minimal nondiagonal object with $G_V=G$. Then $G$ can be generated by three elements. 
\item[(2)] Every nondiagonal object in ${_{\k G}^{\k G} \mathcal{YD}^\Phi}$ contains at least one minimal nondiagonal subobject.
\end{itemize}
\end{corollary}

In our earlier work \cite{HLYY3}, we studied the structure of twisted Yetter-Drinfeld modules $V\in {_{\k G}^{\k G} \mathcal{YD}^\Phi}$ that decompose into a direct sum of three simple objects and satisfy $G_V=G$. This study essentially characterizes the structure of minimal nondiagonal objects. 

\begin{proposition}\cite[Lemma 3.5, Proposition 3.6]{HLYY3}\label{p5.6} 
Let $V=V_1\oplus V_2\oplus V_3\in {_{\k G}^{\k G} \mathcal{YD}^\Phi}$ be a direct sum of simple objects with $G_V=G$. Denote $g_i=g_{V_i}\in G$ and $m_i=|g_i|$ for $1\leq i\leq 3$. Then the following hold:
\begin{itemize}
\item[(1)] $\dim(V_1)=\dim(V_2)=\dim(V_3)=n$, where $n$ is the order of $\frac{{\Phi}_{g_1}(g_2,g_3)}{{\Phi}_{g_1}(g_3,g_2)}$. 
\item[(2)] If $V$ is nondiagonal, then $n> 1$, and for a fixed $i\in \{1,2,3\}$ there exists a basis $\{X_1,X_2,\ldots, X_n\}$ of $V_i$ such that 
\begin{eqnarray}
&&g_i\triangleright X_l=\alpha_i X_l,\  \ 1\leq l\leq n;\label{e6.5}\\
&&g_j\triangleright X_l=\beta_i (\frac{{\Phi}_{g_i}(g_j,g_k)}{{\Phi}_{g_i}(g_k,g_j)})^{l-1} X_l,\ \ 1\leq l\leq n; \label{e6.6}\\
&&g_k\triangleright X_l=X_{l+1}, \ g_k\triangleright X_n=\gamma_i X_1, \  \ 1\leq l\leq n-1.\label{e6.7}
\end{eqnarray}
Here $j\neq k\in \{1,2,3\}\setminus \{i\}$, and the scalars $\alpha_i,\beta_i,\gamma_i\in \k^*$ satisfy
\begin{eqnarray}
\alpha_i^{m_i}&=&\prod_{l=1}^{m_i-1}\Phi_{g_i}(g_i, g_i^l),\\
\beta_i^{m_j}&=&\prod_{l=1}^{m_j-1}\Phi_{g_i}(g_j, g_j^l), \\
\gamma_i^{\frac{m_k}{n}}&=&\prod_{l=1}^{m_k-1}\Phi_{g_i}(g_k, g_k^l).
\end{eqnarray}
\end{itemize}
\end{proposition}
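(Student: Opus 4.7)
The plan is to study the projective action on one fixed simple summand $V_i$ and use the ``$q$-commutation'' between $g_j$ and $g_k$ to produce the basis explicitly. First I would observe that $g_i$ acts by a scalar on $V_i$: by Lemma \ref{l5.1}, $\Phi_{g_i}(g_i,h)/\Phi_{g_i}(h,g_i) = \Phi_h(g_i,g_i)/\Phi_h(g_i,g_i) = 1$ for every $h\in G$, so $g_i$ commutes (on the nose) with the projective action of every generator and hence acts as $\alpha_i\cdot\id$ on the simple object $V_i$. Next, since $G$ is abelian so $g_jg_k=g_kg_j$, comparing $g_j\triangleright(g_k\triangleright v)={\Phi}_{g_i}(g_j,g_k)(g_jg_k)\triangleright v$ with the analogous formula for $g_k\triangleright(g_j\triangleright v)$ shows that as operators on $V_i$ one has $g_jg_k=q\, g_kg_j$, where $q=\Phi_{g_i}(g_j,g_k)/\Phi_{g_i}(g_k,g_j)$; by Lemma \ref{l5.1} this ratio is independent of the choice of $i$ and equals the quantity whose order is $n$ in the statement.

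The basis is then produced by the standard argument for simple modules over the quantum torus. Pick any eigenvector $X_1$ of $g_j$ with eigenvalue $\beta_i$ and set $X_{l+1}:=g_k\triangleright X_l$. An induction using the relation $g_jg_k=q\,g_kg_j$ gives $g_j\triangleright X_l=\beta_iq^{l-1}X_l$; since $q$ has exact order $n$, the eigenvalues $\beta_iq^{l-1}$, $1\le l\le n$, are pairwise distinct, so the $X_l$ are linearly independent. The span $W=\langle X_1,\dots,X_n\rangle$ is visibly closed under $g_j$ and $g_k$, and also under $g_i$ because $g_i$ is a scalar; as $G=\langle g_1,g_2,g_3\rangle$, $W$ is a $G$-subobject, so by simplicity $W=V_i$ and $\dim V_i=n$. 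This settles (1) uniformly in $i$. Part (2) is now immediate: $V$ is diagonal iff the projective actions of $g_j$ and $g_k$ on each $V_i$ commute exactly, i.e.\ $q=1$, i.e.\ $n=1$; by Lemma \ref{l2.4} this is equivalent to $\Phi_{G_V}$ being abelian, so the nondiagonal hypothesis gives $n>1$.

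Finally I would extract the constants $\alpha_i,\beta_i,\gamma_i$. Since $g_i^{m_i}=e$ acts trivially, iterated application of the projective relation $g_i\triangleright(g_i^l\triangleright X_l)=\Phi_{g_i}(g_i,g_i^l)g_i^{l+1}\triangleright X_l$ telescopes to $\alpha_i^{m_i}=\prod_{l=1}^{m_i-1}\Phi_{g_i}(g_i,g_i^l)$. The same telescoping applied to $g_j^{m_j}$ acting on the $g_j$-eigenvector $X_l$ gives the formula for $\beta_i^{m_j}$. For $g_k$ one has $g_k^n\triangleright X_1=\gamma_i X_1$ (the orbit closes after exactly $n$ steps because the eigenvalues of $g_j$ would otherwise contradict distinctness), so iterating $m_k/n$ times yields $\gamma_i^{m_k/n}=\prod_{l=1}^{m_k-1}\Phi_{g_i}(g_k,g_k^l)$.

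The main obstacle is the last point: one must justify that the $g_k$-orbit of $X_1$ closes after exactly $n$ steps (neither sooner nor later), which is what forces $n\mid m_k$ and produces the exponent $m_k/n$ in the formula for $\gamma_i$; the bookkeeping with the $2$-cocycles $\Phi_{g_i}$ then has to match the telescoping product on the nose, and care is needed because the index $j$ versus $k$ in the formulas depends on the choice made for the eigenvector $X_1$. Once the cyclic structure is fixed, the rest is a direct computation.
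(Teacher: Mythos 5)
The paper itself offers no proof of this proposition (it is quoted from [HLYY3, Lemma 3.5, Proposition 3.6]), so your argument has to stand on its own. Its skeleton is sound and is the natural one: on the simple summand $V_i$ the element $g_i$ acts by a scalar (your use of Lemma \ref{l5.1} to get $\Phi_{g_i}(g_i,h)=\Phi_{g_i}(h,g_i)$ is correct), the operators $T_j=g_j\triangleright(-)$ and $T_k=g_k\triangleright(-)$ satisfy $T_jT_k=q\,T_kT_j$ with $q=\Phi_{g_i}(g_j,g_k)/\Phi_{g_i}(g_k,g_j)$, and a cyclic basis is generated from a $T_j$-eigenvector; since every $T_g$, $g\in G$, is a scalar multiple of a monomial in $T_i,T_j,T_k$, stability under these three operators does give a subobject. (Minor points: Lemma \ref{l5.1} makes the various $q$'s agree only up to inversion, which is enough since only the order $n$ matters; and for (2) you only need the direction $n=1\Rightarrow V$ diagonal, which is immediate because then all $V_i$ are one-dimensional.)

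The genuine gap is exactly the step you flag at the end and do not close: why $W=\langle X_1,\dots,X_n\rangle$ is stable under $T_k$, i.e.\ why $g_k^{\,n}\triangleright X_1\in\k X_1$. Your parenthetical reason (``the eigenvalues of $g_j$ would otherwise contradict distinctness'') does not do the job: distinctness of $\beta_i q^{l-1}$ only prevents the orbit from closing \emph{before} $n$ steps; the vector $T_k^nX_1$ has $T_j$-eigenvalue $\beta_iq^n=\beta_i$, the same as $X_1$, and a priori the $\beta_i$-eigenspace of $T_j$ in $V_i$ could be larger than $\k X_1$, so no contradiction forces $T_k^nX_1$ back into $W$. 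The fix is short but is a different idea: $T_k^n$ commutes with $T_k$, with $T_j$ (because $q^n=1$), and with the scalar $T_i$, hence is central in the image of the twisted group algebra acting on the simple object $V_i$; by Schur's lemma $T_k^n=\gamma_i\,\id$ with $\gamma_i\neq0$ (each $T_h$ is invertible since $T_hT_{h^{-1}}$ is a nonzero scalar). This single observation gives $g_k\triangleright X_n=\gamma_iX_1$, the stability of $W$, $\dim V_i=n$, and, after telescoping $T_k^{m_k}=\prod_{l=1}^{m_k-1}\Phi_{g_i}(g_k,g_k^l)\,\id$, the relation $\gamma_i^{m_k/n}=\prod_{l=1}^{m_k-1}\Phi_{g_i}(g_k,g_k^l)$. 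The divisibility $n\mid m_k$ is also not automatic from the orbit picture; it follows because conjugating the nonzero scalar $T_k^{m_k}$ by $T_j$ multiplies it by $q^{m_k}$, forcing $q^{m_k}=1$. With these insertions your remaining computations for $\alpha_i^{m_i}$ and $\beta_i^{m_j}$ and the uniformity of $n$ over $i$ go through as written.
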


\begin{corollary}\label{c5.7}
Let $V$ be a minimal nondiagonal object in ${_{\k G}^{\k G} \mathcal{YD}^\Phi}$ satisfying $G_V=G$. Then $\dim(V)\geq 6$ and each simple subobject of $V$ has dimension $\frac{1}{3}\dim(V)$.
\end{corollary}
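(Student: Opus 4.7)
The plan is to extract this corollary directly from Proposition \ref{p5.4} together with Proposition \ref{p5.6}, so no new machinery is needed.

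First I would invoke Proposition \ref{p5.4} to decompose $V$: since $V$ is a minimal nondiagonal object in ${_{\k G}^{\k G} \mathcal{YD}^\Phi}$, it has rank exactly $3$, so we may write $V = V_1 \oplus V_2 \oplus V_3$ as a direct sum of three simple objects. Any simple subobject of $V$ is therefore (isomorphic to) one of the $V_i$, and the support group $G_V = \langle g_1, g_2, g_3\rangle$ with $g_i = \Deg(V_i)$ equals $G$ by hypothesis. In particular the scalar $\frac{\Phi_{g_1}(g_2,g_3)}{\Phi_{g_1}(g_3,g_2)} \in \k^*$ is defined and has a well-defined (finite) order $n$.

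Next I would apply Proposition \ref{p5.6}(1), which gives $\dim(V_1) = \dim(V_2) = \dim(V_3) = n$. This immediately yields $\dim(V) = 3n$ and hence the dimension of each simple subobject equals $n = \tfrac{1}{3}\dim(V)$, which is the second assertion.

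For the lower bound $\dim(V) \geq 6$, I would use Proposition \ref{p5.6}(2): since $V$ is (minimal) nondiagonal, the proposition guarantees $n > 1$, i.e.\ $n \geq 2$. Combining this with $\dim(V) = 3n$ gives $\dim(V) \geq 6$. There is no genuine obstacle here; the only subtlety worth verifying carefully is that $n > 1$ is exactly the part of Proposition \ref{p5.6} that requires nondiagonality, which is ensured by $V$ being \emph{minimal nondiagonal} (equivalently, by Proposition \ref{p5.4}, by $\Phi_{G_V}$ being a nonabelian $3$-cocycle on $G_V = G$, which forces $\Phi_{g_i}(g_j,g_k) \neq \Phi_{g_i}(g_k, g_j)$ for some $i,j,k$).
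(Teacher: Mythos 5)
Your proposal is correct and follows exactly the route the paper intends: the paper states the corollary as an immediate ("obvious") consequence of Proposition \ref{p5.4} (rank $3$) and Proposition \ref{p5.6} (equal dimensions $n$ with $n>1$ in the nondiagonal case). Nothing further is needed.
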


It is worth noting that in Proposition \ref{p5.6}, the dimensions of $V_1,V_2$ and $V_3$ are also equal to the order of either $\frac{{\Phi}_{g_2}(g_3,g_1)}{{\Phi}_{g_2}(g_1,g_3)}$ or $\frac{{\Phi}_{g_3}(g_1,g_2)}{{\Phi}_{g_3}(g_2,g_1)}$. This equivalence follows from the following identities (see \cite[Lemma 3.4]{HLYY3}):
 \begin{equation}\label{eq5.7}
 \frac{{\Phi}_{g_1}(g_2,g_3)}{{\Phi}_{g_1}(g_3,g_2)}=\frac{{\Phi}_{g_2}(g_3,g_1)}{{\Phi}_{g_2}(g_1,g_3)}=\frac{{\Phi}_{g_3}(g_1,g_2)}{{\Phi}_{g_3}(g_2,g_1)}.
 \end{equation}
Based on Corollary \ref{c5.7}, we can introduce the following definition.
\begin{definition}
Let $G$ be a finite abelian group that can be generated by $3$-elements, and let $\Phi$ be a nonabelian $3$-cocycle on $G$. For each integer $n\geq 2$, denote $\mathcal{F}_n$ be the set of minimal nondiagonal objects $V\in {_{\k G}^{\k G} \mathcal{YD}^\Phi}$ such that $G_V=G$ and $\dim(V)=3n$.
\end{definition}

Let $V=V_1\oplus V_2\oplus V_3\in \mathcal{F}_n$. If for some $i\in \{1,2,3\}$ the simple object $V_i$ is not of the types (T1)-(T3) in Theorem \ref{t4.6},  then $\GKdim(B(V_i))=\infty$  and hence also $\GKdim(B(V))=\infty$, because $B(V_i)$ is an $\N_0$-graded subalgebra of $B(V)$. Therefore, in the following we shall only consider the case in which $V_1$, $V_2$, $V_3$ are all of types (T1)-(T3).  

For the rest of this paper, let $\mathcal{F}^{T}_n\subset \mathcal{F}_n$ denote the subset 
\begin{equation}
\mathcal{F}^{T}_n=\{V_1\oplus V_2\oplus V_3\in \mathcal{F}_n|\ V_i\ \mathrm{is}\  \mathrm{of}\ \mathrm{type}\ (T1), (T2) \ \mathrm{or}\ (T3), 1\leq i\leq 3\}.
\end{equation}
For each $i\in \{1,2,3\}$, we further set
\begin{equation}
\mathcal{F}_n^{(Ti)}=\{V_1\oplus V_2\oplus V_3\in \mathcal{F}_n| V_1,V_2,V_3\ \mathrm{are}\ \mathrm{all}\ \mathrm{of}\ \mathrm{type}\ (Ti)\}.
\end{equation}

Throughout the remainder of this section, we assume that $G$ can be generated by $3$-elements and that $\Phi$ is a nonabelian $3$-cocycle on $G$. We now prove that if $V$ is a minimal nondiagonal object satisfying $G_V=G$ and $\dim(V)\geq 9$, then $\GKdim(B(V))=\infty$.

\begin{proposition}\label{p5.9}
Assume $n\geq 3$ and $\mathcal{F}_n\neq \emptyset$. Then $\GKdim(B(V))=\infty$ for all $V\in \mathcal{F}_n$.
\end{proposition} 
 
\begin{proof}
Let $V=V_1\oplus V_2\oplus V_3\in \mathcal{F}_n$, and denote $g_i=g_{V_i}$ for $1\leq i\leq 3$. By the definition of $\mathcal{F}_n$, we have 
$G=G_V=\langle g_1,g_2,g_3\rangle$ and $n=|\frac{{\Phi}_{g_1}(g_2,g_3)}{{\Phi}_{g_1}(g_3,g_2)}|\geq 3.$
We shall prove that $\GKdim(B(V_1\oplus V_2))=\infty$, which forces $\GKdim(B(V))=\infty$ because $B(V_1\oplus V_2)$ is an $\N_0$-graded subalgebra of $B(V)$.

Set $m_i=|g_i|$ for $1\leq i\leq 3$. According to Proposition \ref{p5.6}, there exists a basis $\{X_1,X_2, \ldots, X_n\}$ of $V_1$ such that 
\begin{eqnarray}
&&g_1\triangleright X_l=\alpha_1 X_l,\  \ 1\leq l\leq n;\label{e5.8}\\
&&g_2 \triangleright X_l=\beta_1 (\frac{{\Phi}_{g_1}(g_2,g_3)}{{\Phi}_{g_1}(g_3,g_2)})^{l-1} X_l,\ \ 1\leq l\leq n; \label{e5.9}\\
&& g_3 \triangleright X_l=X_{l+1}, \ g_3\triangleright X_n=\gamma_1 X_1, \  \ 1\leq l\leq n-1.\label{e5.10}
\end{eqnarray}
Here $\alpha_1, \beta_1,\gamma_1\in \k^*$ satisfy $$\alpha_1^{m_1}=\prod_{l=1}^{m_1-1}\Phi_{g_1}(g_1, g_1^l),\ 
\beta_1^{m_2}=\prod_{l=1}^{m_2-1}\Phi_{g_1}(g_2, g_2^l),\
\gamma_1^{\frac{m_3}{n}}=\prod_{l=1}^{m_3-1}\Phi_{g_1}(g_3, g_3^l).$$
Similarly, there exists a basis $\{Y_1,Y_2,\ldots, Y_n\}$ of $V_2$ such that 
\begin{eqnarray}
&&g_2\triangleright Y_l=\alpha_2 Y_l,\  \ 1\leq l\leq n;\label{e5.11}\\
&&g_1 \triangleright Y_l=\beta_2 (\frac{{\Phi}_{g_2}(g_1,g_3)}{{\Phi}_{g_2}(g_3,g_1)})^{l-1} Y_l,\ \ 1\leq l\leq n; \label{e5.12}\\
&& g_3 \triangleright Y_l=Y_{l+1}, \ g_3\triangleright Y_n=\gamma_2 Y_1, \  \ 1\leq l\leq n-1.\label{e5.13}
\end{eqnarray}
Here $\alpha_2, \beta_2,\gamma_2\in \k^*$ satisfy $$\alpha_2^{m_2}=\prod_{l=1}^{m_2-1}\Phi_{g_2}(g_2, g_2^l),\
\beta_2^{m_1}=\prod_{l=1}^{m_1-1}\Phi_{g_2}(g_1, g_1^l),\
\gamma_2^{\frac{m_3}{n}}=\prod_{l=1}^{m_3-1}\Phi_{g_2}(g_3, g_3^l).$$

From \eqref{e5.8}-\eqref{e5.9} and \eqref{e5.11}-\eqref{e5.12}, the set $\{X_1,\ldots X_n,Y_1,\ldots, Y_n\}$ forms a diagonal basis of $V_1\oplus V_2$. 
Let $q=\frac{{\Phi}_{g_1}(g_2,g_3)}{{\Phi}_{g_1}(g_3,g_2)}$ and denote by $\mathcal{R}$ the braiding on $V$. Then for all $1\leq s,t\leq n$ we have 
\begin{eqnarray}
\mathcal{R}(X_s\otimes Y_t)&=\beta_2 q^{t-1}Y_t\otimes X_s,\\
\mathcal{R}(Y_t\otimes X_s)&=\beta_1 q^{s-1}X_s\otimes Y_t.
\end{eqnarray}
Consequently, in the generalized Dynkin diagram $\mathcal{D}(V_1\oplus V_2)$, the vertices corresponding to $X_s$ and $Y_t$ are connected precisely when 
$$\beta_1\beta_2 q^{s+t-2}\neq 1.$$
Assume that $\beta_1\beta_2$ is not an $n$-th root of unity. Then the vertices corresponding to $X_s$ and $Y_t$ are connected for all $1\leq s,t\leq n$. Hence $\mathcal{D}(V_1\oplus V_2)$ contains a $2n$-cycle, Corollary \ref{c4.5} forces $\GKdim(B(V_1\oplus V_2))=\infty$.

We now assume that $\beta_1\beta_2$ is an $n$-th root of unity. If $n=3$, then $|q|=3$, so there exists a number $0\leq k<3$ such that $\beta_1\beta_2=q^k$. In this case, vertices corresponding to $X_s$ and $Y_t$ are connected if and only if 
$$k+s+t-2\neq 0  \ (\mod 3).$$
Since $k\in \{0,1,2\}$, define $Y'_1=Y_{3-k}$, 
\[ Y'_{2}=
\begin{cases}
Y_{1}, & \ \mathrm{if}\  k=0, \\
Y_{4-k}, & \ \mathrm{if}\  k=1, 2 
\end{cases}\]
and 
\[ Y'_{3}=
\begin{cases}
Y_{2-k}, & \  \mathrm{if}\  k=0, 1,\\
Y_{3}, & \ \mathrm{if}\  k= 2.
\end{cases}\]
Clearly $\{Y'_{1}, Y'_{2}, Y'_{3}\}=\{Y_1,Y_2,Y_3\}$ is a basis of $V_2$.
 Let $a_1, a_2,a_3$ be the vertices of $\mathcal{D}(V_1\oplus V_2)$ corresponding to $X_1,X_2,X_3$, and $b_1,b_2,b_3$ those corresponding to $Y'_{1}, Y'_{2}, Y'_{3}$. A direct verification shows that $a_1$ is connected to $b_1$ and $b_3$, $a_2$ to $b_3$ and $b_2$, and $a_3$ to $b_2$ and $b_1$. Therefore $\mathcal{D}(V_1\oplus V_2)$ contains a $6$-cycle, whence $\GKdim(B(V_1\oplus V_2))=\infty$ by Corollary \ref{c4.5}.

Finally, suppose $n>3$. Let $k\in \{0,1,\ldots, n-1\}$ be the integer satisfying $\beta_1\beta_2=q^k$. Then the vertices corresponding to $X_s$ and $Y_t$ are connected exactly when
$$k+s+t-2\neq 0 \ (\mod n).$$ 
Define
\[ Y''_{1}=
\begin{cases}
Y_{n-k+2}, & \ \mathrm{if}\  2< k <  n ,\\
Y_{2-k}, & \ \mathrm{if}\  0\leq k\leq 2,
\end{cases}\]
and 
\[ Y''_{2}=
\begin{cases}
Y_{n-k+3}, & \ \mathrm{if}\ 3< k < n ,\\
Y_{3-k}, & \ \mathrm{if} \ 0\leq k\leq 3.
\end{cases}\]
Let $a_1,a_2$ be the vertices of $\mathcal{D}(V_1\oplus V_2)$ corresponding to $X_1,X_2$, and $b_1,b_2$ those corresponding to $Y''_{1}, Y''_{2}$. One easily check that both $a_1$ and $a_2$ are connected to $b_1$ and $b_2$. Thus
$\mathcal{D}(V_1\oplus V_2)$ contains a $4$-cycle, we also obtain $\GKdim(B(V_1\oplus V_2))=\infty$ by Corollary \ref{c4.5}.
\end{proof}

Compared with the families $\mathcal{F}_n$ for $n\geq 3$, the study of GK-dimension for Nichols algebras generated by objects in $\mathcal{F}_2$ poses greater challenges.  This problem will be addressed in the next two sections.

\section{Nichols algebras of objects in $\mathcal{F}_2^{(T2)}$}

Throughout this section, let $G$ be a finite abelian group that can be generated by $3$-elements, endowed with a nonabelian $3$-cocycle $\Phi$, and assume that $\mathcal{F}_2^{(T2)}$ is nonempty.

\subsection{A standard form}
In this subsection, we take an object $V\in \mathcal{F}_2^{(T2)}$ in standard form (see Subsection 2.3 for the definition). Write $V=V_1\oplus V_2\oplus V_3$ as a direct sum of simple objects, and let $g_i=g_{V_i}$ for $1\leq i\leq 3$. Thus $V_1, V_2$, $V_3$ are all of type (T2), and  because $V$ is in standard form, we have  $$G=\langle g_1\rangle \times \langle g_2\rangle \times \langle g_3\rangle.$$
Set $m_i=|g_i|$ for $1\leq i\leq 3$. The main result of this subsection is the following.
\begin{proposition}\label{p6.1}
Let $\Phi$ be a $3$-cocycle on $G$ of the form 
\begin{equation}\label{eq6.1}
\Phi(g_1^{i_1}g_2^{i_2}g_3^{i_3}, g_1^{j_1}g_2^{j_2}g_3^{j_3}, g_1^{k_1}g_2^{k_2}g_3^{k_3})=\zeta_{(m_1,m_2,m_3)}^{ci_1j_2k_3}, 
\end{equation}
 where $1\leq c< (m_1,m_2,m_3)$,  $0\leq i_1,j_1,k_1<m_1, 0\leq i_2,j_2,k_2<m_2$, and $0\leq i_3,j_3,k_3<m_3$. Then we have $\GKdim(B(V))=\infty.$
\end{proposition}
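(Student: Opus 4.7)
The strategy is to apply Proposition~\ref{p5.6} with $n=2$ to obtain explicit bases of $V_1, V_2, V_3$ in which the $G$-action (hence the braiding) becomes concrete, and then to analyse the generalised Dynkin diagrams of the diagonal sub-objects $V_i\oplus V_j$, looking for a $4$-cycle and invoking Corollary~\ref{c4.5}.

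The first step is a direct cocycle calculation. From the explicit form \eqref{eq6.1} of $\Phi$ together with formula \eqref{e2.3}, one checks that $\Phi(g_i,g_j,g_k)=-1$ exactly when $(i,j,k)=(1,2,3)$, while the remaining permutations of distinct generators yield $1$. Consequently $\Phi_{g_i}(g_j,g_k)/\Phi_{g_i}(g_k,g_j)=-1$ for every permutation $(i,j,k)$ of $(1,2,3)$, which is consistent with $n=2$; moreover the cocycle products $\prod_{l}\Phi_{g_i}(g_j,g_j^l)$ governing $\beta_i,\gamma_i$ in Proposition~\ref{p5.6} all reduce to~$1$, so these constants are roots of unity. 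Since each $V_i$ is of type $(T2)$ one has $\alpha_i=-1$, and Proposition~\ref{p5.6} then furnishes bases $\{X_l\}\subset V_1$, $\{Y_l\}\subset V_2$, $\{Z_l\}\subset V_3$ with explicit formulas for the action of $g_1,g_2,g_3$.

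The substantive step is the analysis of $\mathcal{D}(V_1\oplus V_2)$ in the basis $\{X_s,Y_t\}$, which simultaneously diagonalises $g_1$ and $g_2$. A direct braiding computation yields self-loops labelled $-1$ at every vertex, together with cross-edge products $q_{X_s,Y_t}q_{Y_t,X_s}=\beta_1\beta_2(-1)^{s+t}$. If $\beta_1\beta_2\notin\{\pm1\}$, then all four cross-edges are present, forming a $4$-cycle, and Corollary~\ref{c4.5} immediately gives $\GKdim B(V)=\infty$. If instead $\beta_1\beta_2\in\{\pm1\}$, I would exploit the fact that $g_1$ acts on $V_1$ as the scalar $-1$ to replace $\{X_1,X_2\}$ by the $g_3$-eigenbasis $X'_{\pm}:=X_1\pm\gamma_1^{-1/2}X_2$; combined with the natural $g_1$-diagonal basis of $V_3$, this makes $V_1\oplus V_3$ diagonal, with cross-edge labels $\pm\beta_3\gamma_1^{1/2}$. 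An analogous base change diagonalises $V_2\oplus V_3$. As soon as any one of the three diagrams $\mathcal{D}(V_i\oplus V_j)$ contains a $4$-cycle we conclude.

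The main obstacle is the fully degenerate subcase in which all three pair-wise cross-parameters ($\beta_1\beta_2$ and its two analogues) simultaneously lie in $\{\pm1\}$, so that every $\mathcal{D}(V_i\oplus V_j)$ is of finite type. Here I would invoke Proposition~\ref{p3.11}: for a suitable permutation $(i,j,k)$ of $(1,2,3)$, the subspace $W=\ad_{V_i}(\ad_{V_j}(V_k))+\ad_{V_j}(\ad_{V_i}(V_k))$ generates a pre-Nichols subalgebra $A(W)\subset B(V)$, and its braiding can be computed from formula \eqref{eq3.24}. The plan is to locate inside $W$ a diagonal sub-object whose generalised Dynkin diagram falls outside Heckenberger's list $\mathfrak{D}$, forcing $\GKdim A(W)=\infty$ and hence $\GKdim B(V)=\infty$. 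The technical difficulty lies precisely in this last step: one has to track the iterated adjoint braiding through the nonabelian $3$-cocycle~\eqref{eq6.1} and verify that the cancellations enforcing finite type at the pair-wise level cannot persist at the cubic level.
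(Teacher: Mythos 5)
Your overall route is the same as the paper's: reduce to the pairwise diagrams $\mathcal{D}(V_i\oplus V_j)$, use Corollary \ref{c4.5} when a $4$-cycle appears (this is exactly the paper's Lemmas 6.2--6.4, with your explicit labels $\beta_1\beta_2(-1)^{s+t}$ for $V_1\oplus V_2$ matching the paper), and in the fully degenerate case pass to the cubic subspace $W=\ad_{V_i}(\ad_{V_j}(V_k))+\ad_{V_j}(\ad_{V_i}(V_k))$ via Proposition \ref{p3.11} and look for a diagram outside $\mathfrak{D}$. One small slip: for $V_1\oplus V_3$ the cross-edge labels are $\pm\sqrt{\gamma_1\gamma_3}$ (both $V_1$ and $V_3$ must be re-diagonalized, with $g_3$-eigenvalues $\pm\sqrt{\gamma_1}$ on $V_1$ and $g_1$-eigenvalues $\pm\sqrt{\gamma_3}$ on $V_3$), not $\pm\beta_3\gamma_1^{1/2}$; the parameter $\beta_3\sqrt{\gamma_2}$ belongs to the pair $V_2\oplus V_3$. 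This does not derail the structure, since the correct trichotomy of degeneracy conditions is $\beta_1\beta_2=\pm1$, $\beta_3\sqrt{\gamma_2}=\pm1$, $\gamma_1\gamma_3=1$.

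The genuine gap is the degenerate case itself, which you leave as a plan ("verify that the cancellations \dots cannot persist at the cubic level") without any verification, and this is precisely where all the content of the proposition lies: the pairwise analysis cannot conclude there, and it is not automatic that $W$ produces an infinite-type diagram --- a priori $W$ could be too small, or its diagram could still be of finite type. The paper's Lemma 6.5 supplies the missing work: it first pins down $\Phi(g_1,g_2,g_3)=-1$, uses the degeneracy relations to compute that $\ad_{V_1}(V_2)$, $\ad_{V_1}(V_3)$, $\ad_{V_2}(V_3)$ are each $2$-dimensional (certain adjoints vanish, e.g.\ $\ad_{X_1}(Y_1)=0$, $\ad_{Y_1}(Z_1)+\beta_3\ad_{Y_2}(Z_1)=0$), then shows via the coproduct formula \eqref{eq3.24} that the four elements $\ad_{X_1}(\ad_{Y_1}(Z_1))$, $\ad_{X_2}(\ad_{Y_1}(Z_2))$, $\ad_{Y_1}(\ad_{X_1}(Z_2))$, $\ad_{Y_2}(\ad_{X_1}(Z_2))$ are nonzero and linearly independent, and finally computes the action of $g_1g_2g_3$ on $W$ to exhibit combinations $E,F$ with $g_1g_2g_3\triangleright E=\mathrm{i}E$ and $g_1g_2g_3\triangleright F=\mathrm{i}F$, so $\mathcal{D}(W)$ contains a rank-two subdiagram with both vertices labelled $\mathrm{i}$ and edge labelled $-1$, which is not in $\mathfrak{D}$; a separate rescaling argument (Lemma 6.6) reduces $\beta_1\beta_2=-1$ to $\beta_1\beta_2=1$. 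Each of these steps uses the degeneracy conditions in an essential way, so without carrying out some version of this computation your argument does not yet establish $\GKdim(B(V))=\infty$ in the only case where the conclusion is actually in doubt.
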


We shall prove the proposition through a sequence of lemmas. Throughout the remainder of this subsection, the $3$-cocycle $\Phi$ is assumed to be of the form \eqref{eq6.1}.  

According to Proposition \ref{p5.6}, the object $V_1$ has a basis $\{X_1,X_2\}$ satisfying 
\begin{eqnarray}
&&g_1\triangleright X_i=-X_i, \ i=1,2,\label{eq6.2}\\
&&g_2\triangleright X_1=\beta_1 X_1,  g_2\triangleright X_2=-\beta_1 X_2,\label{eq6.3}\\
&&g_3\triangleright X_1= X_2,  g_3\triangleright X_2=\gamma_1 X_1,\label{eq6.4}
\end{eqnarray}
with constants $\beta_1,\gamma_1\in \k^*$ subject to $\beta_1^{m_2}=1, \ \gamma_1^{\frac{m_3}{2}}=1$. Here \eqref{eq6.2} follows from the fact that $V_1$ is of type (T2).

Similarly, $V_2$ has a basis $\{Y_1,Y_2\}$ for which
\begin{eqnarray}
&&g_2\triangleright Y_i=-Y_i, \ i=1,2,\label{eq6.5}\\
&&g_1\triangleright Y_1=\beta_2 Y_1,  g_1\triangleright Y_2=-\beta_2 Y_2,\label{eq6.6}\\
&&g_3\triangleright Y_1= Y_2,  g_3\triangleright Y_2=\gamma_2 Y_1,\label{eq6.7}
\end{eqnarray}
with $\beta_2,\gamma_2\in \k^*$ satisfying $\beta_2^{m_1}=1, \ \gamma_2^{\frac{m_3}{2}}=1$.  

Finally, $V_3$ has a basis $\{Z_1,Z_2\}$ such that
\begin{eqnarray}
&&g_3\triangleright Z_i=-Z_i, \ i=1,2,\label{eq6.8} \\
&&g_2\triangleright Z_1=\beta_3 Z_1,  g_2\triangleright Z_2=-\beta_3 Z_2,\label{eq6.9}\\
&&g_1\triangleright Z_1= Z_2,  g_1\triangleright Z_2=\gamma_3 Z_1,\label{eq6.10}
\end{eqnarray}
where $\beta_3,\gamma_3\in \k^*$ are constants with $\beta_3^{m_2}=1, \gamma_3^{\frac{m_1}{2}}=1.$

\begin{lemma}\label{l6.2}
If $\beta_1\beta_2\neq \pm 1$, then $\GKdim(B(V_1\oplus V_2))=\infty.$
\end{lemma}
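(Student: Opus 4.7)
The plan is to exhibit $V_1\oplus V_2$ as an object of diagonal type and then show that, under the hypothesis $\beta_1\beta_2\neq\pm 1$, its generalized Dynkin diagram contains a $4$-cycle, whence Corollary \ref{c4.5} will give $\GKdim(B(V_1\oplus V_2))=\infty$. Note at the outset that the support group of $V_1\oplus V_2$ is $\langle g_1,g_2\rangle$, which is a product of two cyclic groups, so by Lemma \ref{l2.2} together with Lemma \ref{l2.4} the object $V_1\oplus V_2$ is automatically of diagonal type; this is a sanity check, but I will compute the braiding explicitly in order to read off the diagram.

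First I would verify that $\{X_1,X_2,Y_1,Y_2\}$ is a diagonal basis and compute the structure constants $q_{\bullet\bullet}$ directly from $\mathcal{R}(W\otimes W')=\Deg(W)\triangleright W'\otimes W$. Equations \eqref{eq6.2}--\eqref{eq6.3} and \eqref{eq6.5}--\eqref{eq6.6} give
\begin{align*}
\mathcal{R}(X_i\otimes X_j)&=-X_j\otimes X_i, & \mathcal{R}(Y_i\otimes Y_j)&=-Y_j\otimes Y_i,\\
\mathcal{R}(X_i\otimes Y_1)&=\beta_2\,Y_1\otimes X_i, & \mathcal{R}(X_i\otimes Y_2)&=-\beta_2\,Y_2\otimes X_i,\\
\mathcal{R}(Y_j\otimes X_1)&=\beta_1\,X_1\otimes Y_j, & \mathcal{R}(Y_j\otimes X_2)&=-\beta_1\,X_2\otimes Y_j.
\end{align*}
In particular every $q_{WW}$ on this basis equals $-1$, and for the cross products one reads off
\[
\widetilde{q}_{X_1Y_1}=\beta_1\beta_2,\quad \widetilde{q}_{X_1Y_2}=-\beta_1\beta_2,\quad \widetilde{q}_{X_2Y_1}=-\beta_1\beta_2,\quad \widetilde{q}_{X_2Y_2}=\beta_1\beta_2,
\]
while $\widetilde{q}_{X_1X_2}=\widetilde{q}_{Y_1Y_2}=1$.

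Second, I would draw $\mathcal{D}(V_1\oplus V_2)$: vertices corresponding to $X_1,X_2$ are never connected, and likewise for $Y_1,Y_2$; a vertex of ``$X$-type'' is connected to a vertex of ``$Y$-type'' iff the corresponding $\widetilde{q}$ is different from $1$. The hypothesis $\beta_1\beta_2\neq\pm 1$ makes all four values $\pm\beta_1\beta_2$ different from $1$, so each $X_i$ is joined to each $Y_j$. The resulting subdiagram is the complete bipartite graph $K_{2,2}$, which is a $4$-cycle $X_1\,{-}\,Y_1\,{-}\,X_2\,{-}\,Y_2\,{-}\,X_1$.

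Finally, by Corollary \ref{c4.5} the presence of this $4$-cycle forces $\GKdim(B(V_1\oplus V_2))=\infty$. The only real step is the braiding computation; there is no genuine obstacle here because the $3$-cocycle \eqref{eq6.1} restricts trivially to $\langle g_1,g_2\rangle$ (its definition involves the triple index $i_1 j_2 k_3$, which vanishes on this subgroup), so the diagonal-type verification is forced and the analysis reduces to an inspection of a $4$-vertex diagram.
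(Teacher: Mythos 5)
Your proposal is correct and follows essentially the same route as the paper: exhibit $\{X_1,X_2,Y_1,Y_2\}$ as a diagonal basis, observe that for $\beta_1\beta_2\neq\pm1$ the generalized Dynkin diagram $\mathcal{D}(V_1\oplus V_2)$ is a $4$-cycle with edge labels $\pm\beta_1\beta_2$, and conclude by Corollary \ref{c4.5}. Your explicit braiding computation and the resulting labels agree with the diagram displayed in the paper's proof.
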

\begin{proof}
Observe that $\{X_1,X_2,Y_1,Y_2\}$ forms a diagonal basis for $V_1\oplus V_2$. When $\beta_1\beta_2\neq \pm 1$, the generalized Dynkin diagram $\mathcal{D}(V_1\oplus V_2)$ is exactly the $4$-cycle:
\[ {\setlength{\unitlength}{1.5mm}
\Echainfour{$\beta_1\beta_2$}{$-1$}{$-1$}{$-1$}{$-1$}{$-\beta_1\beta_2$}{$\beta_1\beta_2$}{$-\beta_1\beta_2$}} \quad .\] By Corollary \ref{c4.5}, it follows that $\GKdim(B(V_1\oplus V_2))=\infty$.
\end{proof}

\begin{lemma}\label{l6.3}
If $\beta_3\sqrt{\gamma_2}\neq \pm 1$, then $\GKdim(B(V_2\oplus V_3))=\infty.$
\end{lemma}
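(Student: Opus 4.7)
The plan is to mimic the strategy of Lemma 6.2 by exhibiting a diagonal basis of $V_2\oplus V_3$ and showing that under the hypothesis $\beta_3\sqrt{\gamma_2}\neq \pm 1$ the associated generalized Dynkin diagram contains a $4$-cycle, after which Corollary~\ref{c4.5} will finish the argument. The first observation is that the support group of $V_2\oplus V_3$ is $\langle g_2\rangle \times \langle g_3\rangle$, a direct product of two cyclic groups, so by Lemma~\ref{l2.2} and Lemma~\ref{l2.4} the object $V_2\oplus V_3$ is of diagonal type. However, the bases $\{Y_1,Y_2\}$ and $\{Z_1,Z_2\}$ supplied by Proposition~\ref{p5.6} are not simultaneously diagonal for both $g_2$ and $g_3$: on $V_2$ the element $g_3$ permutes $Y_1,Y_2$ (up to the scalar $\gamma_2$), while on $V_3$ the element $g_1$ permutes $Z_1,Z_2$. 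Since only $g_2,g_3$ are in the support group, one only needs to diagonalize $g_3$ on $V_2$ (as $g_2$ already acts as $-1$ on $V_2$ by type (T2)) and note that $\{Z_1,Z_2\}$ is already a $\langle g_2,g_3\rangle$-eigenbasis of $V_3$.

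The next step is to carry out this diagonalization. Using $g_3\triangleright Y_1=Y_2$, $g_3\triangleright Y_2=\gamma_2 Y_1$, I set
\[
Y_\pm := \sqrt{\gamma_2}\,Y_1\pm Y_2,
\]
which gives $g_3\triangleright Y_\pm = \pm\sqrt{\gamma_2}\,Y_\pm$ and $g_2\triangleright Y_\pm = -Y_\pm$. Thus $\{Y_+,Y_-,Z_1,Z_2\}$ is a diagonal basis of $V_2\oplus V_3$. I then read off the structure constants $q_{ij}$ from \eqref{eq2.7}: for the self-braidings I get $q_{Y_\pm,Y_\pm}=q_{Z_i,Z_i}=-1$ and the cross ``diagonal'' labels $\widetilde{q}_{Y_+,Y_-}=\widetilde{q}_{Z_1,Z_2}=1$, so $Y_+,Y_-$ are disconnected in the diagram, and likewise $Z_1,Z_2$. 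For the four cross-edges between the $Y$-side and the $Z$-side, the $g_2$-eigenvalue on $Z_i$ is $\pm\beta_3$ and the $g_3$-eigenvalue on $Y_\pm$ is $\pm\sqrt{\gamma_2}$, so a short computation yields
\[
\widetilde{q}_{Y_+,Z_1}=\widetilde{q}_{Y_-,Z_2}=\beta_3\sqrt{\gamma_2},\qquad
\widetilde{q}_{Y_+,Z_2}=\widetilde{q}_{Y_-,Z_1}=-\beta_3\sqrt{\gamma_2}.
\]

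Under the standing assumption $\beta_3\sqrt{\gamma_2}\neq \pm 1$, all four of these labels differ from $1$, so all four cross-edges are present. Consequently $\mathcal{D}(V_2\oplus V_3)$ is the $4$-cycle $Y_+\text{--}Z_1\text{--}Y_-\text{--}Z_2\text{--}Y_+$ with vertex labels $-1$ and edge labels $\pm\beta_3\sqrt{\gamma_2}$, in complete analogy with the diagram appearing in the proof of Lemma~\ref{l6.2}. Applying Corollary~\ref{c4.5} to this $4$-cycle then forces $\GKdim(B(V_2\oplus V_3))=\infty$, which is the desired conclusion. The only potentially delicate point is the choice of square root $\sqrt{\gamma_2}$, but any choice works since the pair $\{+\sqrt{\gamma_2},-\sqrt{\gamma_2}\}$ is symmetric in the construction; no obstruction arises.
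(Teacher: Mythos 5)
Your proof is correct and follows essentially the same route as the paper: you diagonalize the $g_3$-action on $V_2$ via $Y_\pm=\sqrt{\gamma_2}\,Y_1\pm Y_2$ (the paper's $Y_1',Y_2'$), verify that $\{Y_+,Y_-,Z_1,Z_2\}$ is a diagonal basis whose generalized Dynkin diagram is a $4$-cycle with edge labels $\pm\beta_3\sqrt{\gamma_2}$, and conclude by Corollary~\ref{c4.5}. Your explicit computation of the cross-edge labels just spells out what the paper records in its displayed diagram, so there is nothing to correct.
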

\begin{proof}
Define $Y'_1=\sqrt{\gamma_2}Y_1+Y_2$ and $Y'_2=\sqrt{\gamma_2}Y_1-Y_2$. Then we have 
\begin{equation}\label{eq6.11}
\begin{split}
g_3\triangleright Y'_1&= g_3\triangleright (\sqrt{\gamma_2}Y_1+Y_2)\\
&= \sqrt{\gamma_2}Y_2+\gamma_2Y_1\\
&= \sqrt{\gamma_2}(\sqrt{\gamma_2}Y_1+Y_2)\\
&= \sqrt{\gamma_2} Y'_1,
\end{split}
\end{equation}
and similarly
\begin{equation}\label{eq6.12}
\begin{split}
g_3\triangleright Y'_2&= g_3\triangleright (\sqrt{\gamma_2}Y_1-Y_2)\\
&= \sqrt{\gamma_2}Y_2-\gamma_2Y_1\\
&= -\sqrt{\gamma_2}(\sqrt{\gamma_2}Y_1-Y_2)\\
&=- \sqrt{\gamma_2} Y'_2.
\end{split}
\end{equation}
Clearly $\{Y'_1,Y'_2,Z_1,Z_2\}$ is a diagonal basis of $V_2\oplus V_3$. If $\beta_3\sqrt{\gamma_2}\neq \pm1$, then the generalized Dynkin diagram $\mathcal{D}(V_2\oplus V_3)$ coincides with
\[ {\setlength{\unitlength}{1.5mm}
\Echainfour{$\beta_3\sqrt{\gamma_2}$}{$-1$}{$-1$}{$-1$}{$-1$}{$-\beta_3\sqrt{\gamma_2}$}{$\beta_3\sqrt{\gamma_2}$}{$-\beta_3\sqrt{\gamma_2}$}} \quad .\] 
Hence Corollary \ref{c4.5} implies $\GKdim(B(V_2\oplus V_3))=\infty$.
\end{proof}

\begin{lemma}\label{l6.4}
If $\gamma_1\gamma_3\neq 1$, then $\GKdim(B(V_1\oplus V_3))=\infty.$
\end{lemma}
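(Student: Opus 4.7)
The plan is to run the same strategy used for Lemmas \ref{l6.2} and \ref{l6.3}: diagonalize the remaining nondiagonal actions to obtain a diagonal basis of $V_1\oplus V_3$, read off the generalized Dynkin diagram, exhibit a $4$-cycle in it, and apply Corollary \ref{c4.5}. The obstruction to a diagonal basis in this case is that $g_3$ acts on $V_1$ by the matrix $\begin{pmatrix}0 & \gamma_1 \\ 1 & 0\end{pmatrix}$ in $\{X_1,X_2\}$ and, symmetrically, $g_1$ acts on $V_3$ by $\begin{pmatrix}0 & \gamma_3 \\ 1 & 0\end{pmatrix}$ in $\{Z_1,Z_2\}$; both are readily diagonalized.

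First I would set $X'_1=\sqrt{\gamma_1}X_1+X_2$, $X'_2=\sqrt{\gamma_1}X_1-X_2$ and, in complete analogy with the computations \eqref{eq6.11}--\eqref{eq6.12}, verify that $g_3\triangleright X'_1=\sqrt{\gamma_1}\,X'_1$ and $g_3\triangleright X'_2=-\sqrt{\gamma_1}\,X'_2$. Since $V_1$ is of type (T2), \eqref{eq6.2} gives $g_1\triangleright X'_j=-X'_j$ for $j=1,2$. Symmetrically, setting $Z'_1=\sqrt{\gamma_3}Z_1+Z_2$ and $Z'_2=\sqrt{\gamma_3}Z_1-Z_2$ one obtains $g_1\triangleright Z'_1=\sqrt{\gamma_3}\,Z'_1$, $g_1\triangleright Z'_2=-\sqrt{\gamma_3}\,Z'_2$, while \eqref{eq6.8} yields $g_3\triangleright Z'_j=-Z'_j$. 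Hence $\{X'_1,X'_2,Z'_1,Z'_2\}$ is a diagonal basis of $V_1\oplus V_3$.

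Next I would compute the structure constants via $\mathcal{R}(U\otimes W)=\Deg(U)\triangleright W\otimes U$. The entries within $V_1$ (resp.\ $V_3$) each give $q_{ii}=-1$ and $q_{ij}q_{ji}=1$ for $i\neq j$, so no internal edges appear. For the cross pairs a direct calculation shows
\[
q_{X'_i,Z'_j}\,q_{Z'_j,X'_i}=(-1)^{i+j}\sqrt{\gamma_1\gamma_3},\qquad 1\le i,j\le 2.
\]
Under the hypothesis $\gamma_1\gamma_3\neq 1$, none of these four values equals $1$, so all four cross edges are present in $\mathcal{D}(V_1\oplus V_3)$. Together with the absence of internal edges, this means the four vertices $X'_1,Z'_1,X'_2,Z'_2$ form a $4$-cycle in $\mathcal{D}(V_1\oplus V_3)$.

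By Corollary \ref{c4.5}, the presence of a $4$-cycle in the generalized Dynkin diagram forces $\GKdim(B(V_1\oplus V_3))=\infty$, which is the desired conclusion. I do not anticipate any genuine obstacle: the only things to double-check are that the square roots $\sqrt{\gamma_1}$ and $\sqrt{\gamma_3}$ can be chosen in $\k^*$ (which is automatic since $\k$ is algebraically closed) and that the eigenvalues $\pm\sqrt{\gamma_1\gamma_3}$ cannot both equal $1$ simultaneously, which is precisely the hypothesis $\gamma_1\gamma_3\ne 1$.
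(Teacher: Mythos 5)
Your proposal is correct and follows essentially the same route as the paper: the same change of basis $X'_j=\sqrt{\gamma_1}X_1\pm X_2$, $Z'_j=\sqrt{\gamma_3}Z_1\pm Z_2$ diagonalizing the $g_3$- and $g_1$-actions, the resulting $4$-cycle with edge labels $\pm\sqrt{\gamma_1\gamma_3}$ (nontrivial precisely because $\gamma_1\gamma_3\neq 1$), and the conclusion via Corollary \ref{c4.5}. The only nitpick is your closing phrase ``cannot both equal $1$''; what is needed (and what you in fact verified) is that neither of $\pm\sqrt{\gamma_1\gamma_3}$ equals $1$, which is equivalent to $\gamma_1\gamma_3\neq 1$.
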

\begin{proof}
Define $X'_1=\sqrt{\gamma_1}X_1+X_2, X'_2=\sqrt{\gamma_1}X_1-X_2$, and $Z'_1=\sqrt{\gamma_3}Z_1+Z_2, Z'_2=\sqrt{\gamma_3}Z_1-Z_2$. As in the computations of \eqref{eq6.11} and \eqref{eq6.12}, one obtains 
\begin{equation*}
\begin{split}
g_3\triangleright X'_1=\sqrt{\gamma_1}X'_1, & \ \ \  g_3\triangleright X'_2=-\sqrt{\gamma_1}X'_2,\\
g_1\triangleright Z'_1=\sqrt{\gamma_3}Z'_1, & \ \ \  g_1\triangleright Z'_2=-\sqrt{\gamma_3}Z'_2.
\end{split}
\end{equation*}
Thus $\{X'_1,X'_2,Z'_1,Z'_2\}$ is a diagonal basis of $V_1\oplus V_3$. If $\gamma_1\gamma_3\neq 1$, the $\mathcal{D}(V_1\oplus V_3)$ becomes the $4$-cycle
\[ {\setlength{\unitlength}{1.5mm}
\Echainfour{$\sqrt{\gamma_1\gamma_3}$}{$-1$}{$-1$}{$-1$}{$-1$}{$-\sqrt{\gamma_1\gamma_3}$}{$\sqrt{\gamma_1\gamma_3}$}{$-\sqrt{\gamma_1\gamma_3}$}} \quad .\] 
This implies that $\GKdim(B(V_1\oplus V_3))=\infty$ by Corollary \ref{c4.5}.
\end{proof}

\begin{lemma}\label{l6.5}
Suppose that
\begin{equation}\label{eq6.13}
\beta_1\beta_2=1,\  \beta_3\sqrt{\gamma_2}=\pm 1,\  {\gamma_1\gamma_3}= 1,
\end{equation}
 then $\GKdim(B(V))=\infty.$
\end{lemma}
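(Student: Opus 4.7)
The plan is to exhibit an infinite GK-dimensional pre-Nichols subalgebra inside $B(V)$ via the iterated-adjoint construction of Proposition~\ref{p3.11}. Since the pairwise hypotheses $\beta_1\beta_2=1$, $\beta_3\sqrt{\gamma_2}=\pm 1$, $\gamma_1\gamma_3=1$ are exactly those that prevent the proofs of Lemmas \ref{l6.2}--\ref{l6.4} from locating a $4$-cycle inside $\mathcal{D}(V_i\oplus V_j)$, it is natural to pass to degree-$3$ elements that couple all three summands simultaneously.

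Concretely, I would consider the $\N_0^3$-homogeneous subspace
$$W \;=\; \ad_{V_1}\!\bigl(\ad_{V_2}(V_3)\bigr) \;+\; \ad_{V_2}\!\bigl(\ad_{V_1}(V_3)\bigr) \;\subset\; B(V),$$
which is $G$-homogeneous of degree $g_1g_2g_3$. Provided $W\neq 0$, Proposition~\ref{p3.11} furnishes a pre-Nichols algebra structure on the subalgebra $A(W)\subseteq B(V)$ generated by $W$, so that
$$\GKdim B(V)\;\geq\;\GKdim A(W)\;\geq\;\GKdim B(W)$$
by \eqref{eq2.18}--\eqref{eq2.19}. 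It therefore suffices to prove $\GKdim B(W)=\infty$.

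The next step is an explicit unpacking of $W$ as a twisted Yetter--Drinfeld module. Using the action formulas \eqref{eq6.2}--\eqref{eq6.10}, the form \eqref{eq6.1} of $\Phi$, and the value $\zeta_{(m_1,m_2,m_3)}^{c}=-1$ forced by $|q|=2$, one computes $\ad_{X_i}\!\bigl(\ad_{Y_j}(Z_k)\bigr)$ and $\ad_{Y_j}\!\bigl(\ad_{X_i}(Z_k)\bigr)$ via the bialgebra computation appearing in the proof of Lemma~\ref{l3.10}. Since all of $W$ is supported at the single $G$-degree $g_1g_2g_3$, its braiding is determined by the action of $g_1g_2g_3$; computing $\Phi_{g_1g_2g_3}$ through Lemma~\ref{l5.1} and diagonalizing this action, via square-root substitutions analogous to those used in Lemmas~\ref{l6.3}--\ref{l6.4}, should produce an explicit diagonal basis of $W$. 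The contributions of $\beta_1\beta_2$, $\beta_3\sqrt{\gamma_2}$, $\gamma_1\gamma_3$ are all tuned to $\pm 1$ by hypothesis, so the remaining scalars in the structure constants $q_{ij}$ of $W$ reduce to a combination of $\pm 1$ and the nonabelian factor $\zeta_{(m_1,m_2,m_3)}^{c}=-1$. I expect that the resulting generalized Dynkin diagram $\mathcal{D}(W)$ contains a $4$-cycle, so Corollary~\ref{c4.5} gives $\GKdim B(W)=\infty$ and hence $\GKdim B(V)=\infty$.

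The principal obstacle is the explicit verification of three things at once: (i)~that $W$ is nonzero, i.e.\ no accidental cancellation occurs when the three degenerate hypotheses are imposed simultaneously; (ii)~that the $g_1g_2g_3$-action on $W$ has enough distinct eigenvalues (up to sign) to produce at least four vertices carrying nontrivial edges, so that the $4$-cycle really appears; and (iii)~bookkeeping of the sign ambiguity $\beta_3\sqrt{\gamma_2}=\pm 1$, which may split the argument into two parallel subcases. Should the diagram $\mathcal{D}(W)$ fall short of containing a $4$-cycle in some subcase, I would instead iterate the construction once more, applying Proposition~\ref{p3.11} to $W$ in a $W\oplus V_i$ context, or pass to the alternative triple $(i,j,k)\in\{(2,3,1),(1,3,2)\}$; in each case the presence of the nonabelian factor $\zeta^{c}=-1$ ensures that some cross term is forced to be nontrivial.
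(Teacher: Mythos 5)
Your scaffolding coincides with the paper's: the paper also takes $W=\ad_{V_1}(\ad_{V_2}(V_3))+\ad_{V_2}(\ad_{V_1}(V_3))$, invokes Proposition~\ref{p3.11} to make $A(W)$ a pre-Nichols algebra of $W$, notes that $G_W=\langle g_1g_2g_3\rangle$ is cyclic so $B(W)$ is diagonal, and then analyzes the action of $g_1g_2g_3$ on $W$. The paper also carries out your item (i): it first computes $\ad_{V_1}(V_2)$, $\ad_{V_1}(V_3)$, $\ad_{V_2}(V_3)$ explicitly (using relations such as $\ad_{X_1}(Y_1)=0$ and $\ad_{Y_1}(Z_1)+\beta_3\ad_{Y_2}(Z_1)=0$, which depend on \eqref{eq6.13}), and then uses the coproduct formula of Lemma~\ref{l3.10} to show that $\ad_{X_1}(\ad_{Y_1}(Z_1))$, $\ad_{X_2}(\ad_{Y_1}(Z_2))$, $\ad_{Y_1}(\ad_{X_1}(Z_2))$, $\ad_{Y_2}(\ad_{X_1}(Z_2))$ are nonzero and linearly independent.

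The genuine gap is in your concluding mechanism. The diagram $\mathcal{D}(W)$ does \emph{not} contain a $4$-cycle, so Corollary~\ref{c4.5} never applies, and your fallbacks (iterating the adjoint construction, or switching the triple $(i,j,k)$) are not what repairs it. Since every nonzero element of $W$ has the same $G$-degree $g_1g_2g_3$, the braiding on a diagonal basis is $\mathcal{R}(w_j\otimes w_k)=\lambda_k\, w_k\otimes w_j$, where $\lambda_k$ is the $g_1g_2g_3$-eigenvalue of $w_k$; hence $q_{jk}q_{kj}=\lambda_j\lambda_k$ and two vertices are joined iff $\lambda_j\lambda_k\neq 1$. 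Under \eqref{eq6.13} one computes (as in \eqref{eq6.39}--\eqref{eq6.42}) that $g_1g_2g_3$ squares to $-1$ on each of the two natural $2$-dimensional pieces of $W$, so the eigenvalues are $\mathrm{i},-\mathrm{i},\mathrm{i},-\mathrm{i}$: the two $\mathrm{i}$-vertices are joined to each other, the two $(-\mathrm{i})$-vertices are joined to each other, and there are no edges between the pairs. Thus $\mathcal{D}(W)$ is a disjoint union of two rank-$2$ diagrams with both vertices labelled $\mathrm{i}$ (resp.\ $-\mathrm{i}$) and edge label $-1$. The correct finish, and the one the paper uses with the eigenvectors $E=\ad_{X_1}(\ad_{Y_1}(Z_1))+\mathrm{i}\,\ad_{X_2}(\ad_{Y_1}(Z_2))$ and $F=\ad_{Y_1}(\ad_{X_1}(Z_2))+\mathrm{i}\beta_3\,\ad_{Y_2}(\ad_{X_1}(Z_2))$, is to observe that this rank-$2$ subdiagram is not in Heckenberger's list $\mathfrak{D}$, so Corollary~\ref{c4.3} (not Corollary~\ref{c4.5}) yields $\GKdim(B(W))=\infty$ and hence $\GKdim(B(V))=\infty$. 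With that substitution, and with the nonvanishing/independence verification actually carried out, your plan becomes the paper's proof.
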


\begin{proof}
The proof will be organized in four steps. We note that $B(V)$ is $\N_0^3$-graded with $\deg(V_i)=e_i$ for $1\leq i\leq 3$, where $\{e_1,e_2,e_3\}$ is a standard basis of $\N_0^3$. 

{\bf Step 1}. We prove that  
\begin{equation}\label{eq6.14}
\Phi(g_1^{i_1}g_2^{i_2}g_3^{i_3}, g_1^{j_1}g_2^{j_2}g_3^{j_3}, g_1^{k_1}g_2^{k_2}g_3^{k_3})=(-1)^{i_1j_2k_3}.
\end{equation}
From the explicit formula \eqref{eq6.1}, a direct calculation yields $$\frac{{\Phi}_{g_1}(g_2,g_3)}{{\Phi}_{g_1}(g_3,g_2)}=\frac{\Phi(g_1,g_2,g_3)\Phi(g_2,g_3,g_1)\Phi(g_3,g_1,g_2)}{\Phi(g_3,g_1,g_2)\Phi(g_1,g_3,g_2)\Phi(g_3,g_2,g_1)}=\Phi(g_1,g_2,g_3).$$
On the other hand, because $\dim(V_1)=|\frac{{\Phi}_{g_1}(g_2,g_3)}{{\Phi}_{g_1}(g_3,g_2)}|=2$, we have $|\Phi(g_1,g_2,g_3)|=2$.
Hence $$\Phi(g_1,g_2,g_3)=\zeta_{(m_1,m_2,m_3)}^{c}=-1.$$ Therefore the $3$-cocycle $\Phi$ necessarily takes the form \eqref{eq6.14}.

{\bf Step 2}. We show that the subspaces $\ad_{V_1}(V_2), \ad_{V_1}(V_3)$ and $\ad_{V_2}(V_3)$ of $B(V)$ are all $2$-dimensional.

First, we compute the coproduct of elements in $\ad_{V_2}(V_3)$. Using \eqref{eq3.22} we obtain
\begin{equation}\label{eq6.15}
\begin{split}
\D(\ad_{Y_1}(Z_1))=&\ad_{Y_1}(Z_1)\otimes 1+1\otimes \ad_{Y_1}(Z_1)+Y_1\otimes Z_1-g_3\triangleright Y_1\otimes g_2 \triangleright Z_1\\
=&1\otimes \ad_{Y_1}(Z_1)+ \ad_{Y_1}(Z_1)\otimes 1+(Y_1-\beta_3Y_2)\otimes Z_1.
\end{split}
\end{equation}
Similarly,  
\begin{eqnarray}
&&\D(\ad_{Y_1}(Z_2))=1\otimes \ad_{Y_1}(Z_2)+ \ad_{Y_1}(Z_2)\otimes 1+(Y_1+\beta_3 Y_2)\otimes Z_2, \label{eq6.16}\\
&&\D(\ad_{Y_2}(Z_1))=1\otimes \ad_{Y_2}(Z_1)+ \ad_{Y_2}(Z_1)\otimes 1+(Y_2-\beta_3\gamma_2 Y_1)\otimes Z_1,\label{eq6.17}\\
&&\D(\ad_{Y_2}(Z_2))=1\otimes \ad_{Y_2}(Z_2)+ \ad_{Y_2}(Z_2)\otimes 1 +(Y_2+\beta_3\gamma_2 Y_1)\otimes Z_2. \label{eq6.18}
\end{eqnarray}
From $\beta_3^2\gamma_2=1$ (by \eqref{eq6.13}) and equations \eqref{eq6.15} and \eqref{eq6.17}, we obtain 
\begin{equation}\label{eq6.19}
\begin{split}
&\D(\ad_{Y_1}(Z_1)+\beta_3\ad_{Y_2}(Z_1))\\
&=(\ad_{Y_1}(Z_1)+\beta_3\ad_{Y_2}(Z_1))\otimes 1+1\otimes (\ad_{Y_1}(Z_1)+\beta_3\ad_{Y_2}(Z_1)).
\end{split}
\end{equation}
If $\ad_{Y_1}(Z_1)+\beta_3\ad_{Y_2}(Z_1)\neq 0$, it would be a nonzero primitive element of length $2$, which contradicts the definition of a Nichols algebra. Hence
\begin{equation}
\ad_{Y_1}(Z_1)+\beta_3\ad_{Y_2}(Z_1)=0. \label{eq6.21}
\end{equation}
From \eqref{eq6.16} and \eqref{eq6.18}, we similarly obtain 
\begin{equation}\label{eq6.20}
\begin{split}
&\D(\ad_{Y_1}(Z_2)-\beta_3\ad_{Y_2}(Z_2))\\
&=(\ad_{Y_1}(Z_2)-\beta_3\ad_{Y_2}(Z_2))\otimes 1+1\otimes (\ad_{Y_1}(Z_2)-\beta_3\ad_{Y_2}(Z_2)),
\end{split}
\end{equation}
and consequently 
\begin{eqnarray}
\ad_{Y_1}(Z_2)-\beta_3\ad_{Y_2}(Z_2)=0. \label{eq6.22}
\end{eqnarray} 
Therefore, by \eqref{eq6.21} and \eqref{eq6.22}, the set $\{\ad_{Y_1}(Z_1), \ad_{Y_1}(Z_2)\}$ forms a basis of $\ad_{V_2}(V_3)$.

Next we consider $\ad_{V_1}(V_2)$.  Analogous to \eqref{eq6.15}-\eqref{eq6.18} we obtain
\begin{eqnarray}
&&\D(\ad_{X_1}(Y_1))=1\otimes \ad_{X_1}(Y_1) +\ad_{X_1}(Y_1)\otimes 1,\label{eq6.23} \\
&&\D(\ad_{X_1}(Y_2))=1\otimes \ad_{X_1}(Y_2)+ \ad_{X_1}(Y_2)\otimes 1+2X_1\otimes Y_2,\label{eq6.24}\\
&&\D(\ad_{X_2}(Y_1))=1\otimes \ad_{X_2}(Y_1)+\ad_{X_2}(Y_1)\otimes 1+2X_2 \otimes Y_1,\label{eq6.25}\\
&&\D(\ad_{X_2}(Y_2))=1\otimes \ad_{X_2}(Y_2)+\ad_{X_2}(Y_2)\otimes 1.\label{eq6.26}
\end{eqnarray}
Equations \eqref{eq6.23} and \eqref{eq6.26} force
\begin{equation}\label{eq6.27}
\ad_{X_1}(Y_1)=0,\  \ad_{X_2}(Y_2)=0.
\end{equation} 
Consequently $\{\ad_{X_1}(Y_2), \ad_{X_2}(Y_1)\}$ is a basis of $\ad_{V_1}(V_2)$.

Finally, we determine a basis of $\ad_{V_1}(V_3)$. Using \eqref{eq3.22} we have 
\begin{eqnarray}
&&\D(\ad_{X_1}(Z_1))=1\otimes \ad_{X_1}(Z_1) +\ad_{X_1}(Z_1)\otimes 1+X_1\otimes Z_1-X_2\otimes Z_2,\label{eq6.28} \\
&&\D(\ad_{X_1}(Z_2))=1\otimes \ad_{X_1}(Z_2)+ \ad_{X_1}(Z_2)\otimes 1+X_1\otimes Z_2-\gamma_3 X_2\otimes Z_1,\label{eq6.29} \\
&&\D(\ad_{X_2}(Z_1))=1\otimes \ad_{X_2}(Z_1)+\ad_{X_2}(Z_1)\otimes 1+ X_2\otimes Z_1-\gamma_1X_1\otimes Z_2,\label{eq6.30}\\
&&\D(\ad_{X_2}(Z_2))=1\otimes \ad_{X_2}(Z_2)+\ad_{X_2}(Z_2)\otimes 1 -X_1\otimes Z_1+X_2\otimes Z_2.\label{eq6.31}
\end{eqnarray}
Because $\gamma_1\gamma_3=1$ by \eqref{eq6.13}, the last four identities imply that
\begin{eqnarray}
&&\ad_{X_1}(Z_1)+\ad_{X_2}(Z_2)=0, \label{eq6.32}\\
&&\ad_{X_1}(Z_2)+\gamma_3 \ad_{X_2}(Z_1)=0.\label{eq6.33}
\end{eqnarray}
Hence $\{\ad_{X_1}(Z_1), \ad_{X_1}(Z_2)\}$ is a basis of $\ad_{V_1}(V_3)$.

{\bf Step 3.} Set $W=\ad_{V_1}(\ad_{V_2}(V_3))+\ad_{V_2}(\ad_{V_1}(V_3))$. We will show that the four elements $$\ad_{X_1}(\ad_{Y_1}(Z_1)), \ad_{X_2}(\ad_{Y_1}(Z_2)),\ad_{Y_1}(\ad_{X_1}(Z_2)), \ad_{Y_2}(\ad_{X_1}(Z_2))$$ are nonzero and linearly independent in $W$, hence $W$ is a nonzero subobject of $B(V)$.  

First we compute their coproducts. Using \eqref{eq3.24} we have 
\begin{equation}\label{eq6.34}
\begin{split}
&\D(\ad_{X_1}(\ad_{Y_1}(Z_1)))\\
&=1\otimes \ad_{X_1}(\ad_{Y_1}(Z_1))+\ad_{X_1}(\ad_{Y_1}(Z_1))\otimes 1\\
& \ \ + \big[X_1\otimes \ad_{Y_1}(Z_1)-(g_2g_3)\triangleright X_1\otimes g_1\triangleright \ad_{Y_1}(Z_1)\big]\\
&\ \ +\big[\Phi(g_1,g_2,g_3)\Phi^{-1}(g_2,g_1,g_3)g_1\triangleright Y_1\otimes \ad_{X_1} Z_1 \\
& \ \ \ \ \ \ \ \ \ \ \ \ \ -\Phi(g_1,g_3,g_2)g_1g_3\triangleright Y_1\otimes \ad_{X_1}(g_2 \triangleright Z_1) \big]\\
&\ \ +\Phi(g_1,g_2,g_3)\big[ X_1Y_1\otimes Z_1-X_1g_k\triangleright Y_1\otimes g_2\triangleright Z_1- g_1\triangleright Y_1 g_3\triangleright X_1\otimes g_1\triangleright Z_1\\
&\ \ \ \ \ \ \ \ \ \ \ \ \ \ \ \ \ \ \ \ \ \ +\Phi(g_2,g_1,g_3)\Phi(g_3,g_1,g_2)g_1g_3\triangleright Y_1g_3\triangleright X_1\otimes g_1g_2\triangleright Z_1\big]\\
&=1\otimes \ad_{X_1}(\ad_{Y_1}(Z_1)) +\ad_{X_1}(\ad_{Y_1}(Z_1))\otimes 1+ X_1\otimes \ad_{Y_1}(Z_1)\\
&\ \ \ +X_2\otimes  \ad_{Y_1}(Z_2)-\beta_2(Y_1+\beta_3Y_2)\otimes \ad_{X_1}(Z_1)\\
&\ \ \ +(\beta_3X_1Y_2-X_1Y_1)\otimes Z_1+ \beta_2(Y_1X_2+\beta_3Y_2X_2)\otimes Z_2.
\end{split}
\end{equation}
Here the second identity follows from \eqref{eq6.14} and \eqref{eq6.2}-\eqref{eq6.10}.
Clearly $\D(\ad_{X_1}(\ad_{Y_1}(Z_1)))\neq 0$, hence $\ad_{X_1}(\ad_{Y_1}(Z_1))\neq 0$.

In the same way we obtain

\begin{equation}\label{eq6.35}
\begin{split}
&\D( \ad_{X_2}(\ad_{Y_1}(Z_2)))\\
&=1\otimes  \ad_{X_2}(\ad_{Y_1}(Z_2)) + \ad_{X_2}(\ad_{Y_1}(Z_2))\otimes 1-\beta_1\gamma_1X_1\otimes \ad_{Y_1}(Z_1)\\
&\ \ \ +X_2\otimes  \ad_{Y_1}(Z_2)+\beta_2(Y_1-\beta_3Y_2)\otimes \ad_{X_1}(Z_1)\\
&\ \ \ +\beta_2(Y_1X_1-\beta_3Y_2X_1)\otimes Z_1-(X_2Y_1+\beta_3X_2Y_2)\otimes Z_2,
\end{split}
\end{equation}

\begin{equation}\label{eq6.36}
\begin{split}
&\D(\ad_{Y_1}(\ad_{X_1}(Z_2)))\\
&=1\otimes \ad_{Y_1}(\ad_{X_1}(Z_2))+\ad_{Y_1}(\ad_{X_1}(Z_2))\otimes 1-\beta_1X_1\otimes \ad_{Y_1}(Z_2)\\
&-\beta_1\gamma_3 X_2\otimes \ad_{Y_1}(Z_1)+(Y_1-\beta_3Y_2)\otimes \ad_{X_1}(Z_2)\\
&-\gamma_3(Y_1X_2+\beta_1\beta_3X_2Y_2)\otimes Z_1+(Y_1X_1+\beta_1\beta_3X_1Y_2)\otimes Z_2,
\end{split}
\end{equation}
and 
\begin{equation}\label{eq6.37}
\begin{split}
&\D(\ad_{Y_2}(\ad_{X_1}(Z_2)))\\
&=1\otimes \ad_{Y_2}(\ad_{X_1}(Z_2))+\ad_{Y_2}(\ad_{X_1}(Z_2))\otimes 1-\beta_1X_1\otimes \ad_{Y_2}(Z_2)\\
&-\beta_1\gamma_3X_2\otimes \ad_{Y_2}(Z_1)+(Y_2+\beta_3\gamma_2Y_1)\otimes \ad_{X_1}(Z_2)\\
&-\gamma_3(Y_2X_2+\beta_1\beta_3\gamma_2X_2Y_1)\otimes Z_1+(Y_2X_1+\beta_1\beta_3\gamma_2X_1Y_1)\otimes Z_2.
\end{split}
\end{equation}
Because their coproducts are nonzero, the elements  $\ad_{X_2}(\ad_{Y_1}(Z_2)),$ $\ad_{Y_1}(\ad_{X_1}(Z_2)),$ $ \ad_{Y_2}(\ad_{X_1}(Z_2))$ are also nonzero. 

We now prove that these four elements are linearly independent. Set 
$$T=c_1 \ad_{X_1}(\ad_{Y_1}(Z_1))+c_2\ad_{X_2}(\ad_{Y_1}(Z_2))+c_3 \ad_{Y_1}(\ad_{X_1}(Z_2))+c_4 \ad_{Y_2}(\ad_{X_1}(Z_2))$$
with $c_1,c_2,c_3,c_4\in \k$ and assume $T=0$. We must show that all coefficients are zero.
Using \eqref{eq6.34}-\eqref{eq6.37}, the component of $\D(T)$ of $\N_0^3$-degree $(e_2, e_1+e_3)$ equals
\begin{equation}\label{eq6.38}
\begin{split}
&\big[(c_2-c_1)\beta_2Y_1-(c_1+c_2)\beta_2\beta_3Y_2\big]\otimes \ad_{X_1}(Z_1)\\
&+\big[(c_3+c_4\beta_3\gamma_2)Y_1+(c_4-c_3\beta_3)Y_2\big]\otimes \ad_{X_1}(Z_2)\\
=&\big[(c_2-c_1)\beta_2Y_1-(c_1+c_2)\beta_2\beta_3Y_2\big]\otimes \ad_{X_1}(Z_1)\\
&+\big[(c_3+\beta_3\gamma_2c_4)Y_1+\beta_3(\beta_3\gamma_2c_4-c_3)Y_2\big]\otimes \ad_{X_1}(Z_2),
\end{split}
\end{equation}
where the simplification uses \eqref{eq6.13}. Since $T=0$, this expression must vanish. Because $\{Y_1,Y_2\}$ is a basis of $V_2$, and $\{\ad_{X_1}(Z_1), \ad_{X_1}(Z_2)\}$ is a basis of $\ad_{V_1}(V_3)$, we obtain
\begin{equation}\label{6.39}
\begin{split}
&c_2-c_1=0, \ c_1+c_2=0,\\
&c_3+\beta_3\gamma_2c_4=0,\ c_4\beta_3\gamma_2-c_3=0. 
\end{split}
\end{equation}
It follows immediately that $c_1=c_2=c_3=c_4=0$. Hence the four elements $\ad_{X_1}(\ad_{Y_1}(Z_1))$, $\ad_{X_2}(\ad_{Y_1}(Z_2))$, $\ad_{Y_1}(\ad_{X_1}(Z_2))$ and $\ad_{Y_2}(\ad_{X_1}(Z_2))$ are linearly independent.

{\bf Step 4}. We show that $\GKdim(A(W))=\infty$, where $A(W)$ is the subalgebra of $B(V)$ generated by $W$. Because $A(W)$ is an $N_0$-graded subalgebra of $B(V)$, this will imply $\GKdim(B(V))=\infty$.

By Proposition \ref{p3.11}, $A(W)$ is a pre-Nichols algebra of $W$. Hence it suffices to prove that $\GKdim(B(W))=\infty$, because there is a canonical projection from $A(W)$ onto $B(W)$. Since $G_W=\langle g_1g_2g_3\rangle$ is a cyclic group, $B(W)$ is of diagonal type. Therefore it is enough to verify that the generalized Dynkin diagram $\mathcal{D}(W)$ is not of finite type.

Direct calculation gives
\begin{equation}\label{eq6.39}
\begin{split}
&g_1g_2g_3 \triangleright \ad_{X_1}(\ad_{Y_1}(Z_1))\\
&=\Phi_{g_1g_2g_3}(g_1g_2,g_3)\Phi_{g_1g_2g_3}(g_1,g_2)g_1 \triangleright \{g_2\triangleright [g_3 \triangleright \ad_{X_1}(\ad_{Y_1}(Z_1))]\}\\
&=-g_1 \triangleright \{g_2\triangleright [g_3 \triangleright \ad_{X_1}(\ad_{Y_1}(Z_1))]\}\\
&=-g_1 \triangleright \{g_2\triangleright \ad_{X_2}(\ad_{Y_2}(Z_1))\}\\
&=\beta_1\beta_3 g_1 \triangleright \ad_{X_2}(\ad_{Y_2}(Z_1))\\
&=-\beta_1\beta_2\beta_3 \ad_{X_2}(\ad_{Y_2}(Z_2))\\
&=- \ad_{X_2}(\ad_{Y_1}(Z_2)).
\end{split}
\end{equation}
Here the sixth identity follows from \eqref{eq6.13} and \eqref{eq6.22}.
Furthermore,
\begin{equation}\label{eq6.40}
\begin{split}
&g_1g_2g_3 \triangleright \ad_{X_2}(\ad_{Y_1}(Z_2))\\
&=\Phi_{g_1g_2g_3}(g_1g_2,g_3)\Phi_{g_1g_2g_3}(g_1,g_2)g_1 \triangleright \{g_2\triangleright [g_3 \triangleright \ad_{X_2}(\ad_{Y_1}(Z_2))]\}\\
&=-g_1 \triangleright \{g_2\triangleright [g_3 \triangleright \ad_{X_2}(\ad_{Y_1}(Z_2))]\}\\
&=-\gamma_1 g_1 \triangleright \{g_2\triangleright \ad_{X_1}(\ad_{Y_2}(Z_2))\}\\
&=\beta_1\beta_3\gamma_1 g_1 \triangleright  \ad_{X_1}(\ad_{Y_2}(Z_2))\\
&=-\beta_1\beta_2\beta_3\gamma_1\gamma_3  \ad_{X_1}(\ad_{Y_2}(Z_1))\\
&= \ad_{X_1}(\ad_{Y_1}(Z_1)).
\end{split}
\end{equation}
Here the last step uses \eqref{eq6.21}.
Similarly, 
\begin{equation}\label{eq6.41}
\begin{split}
&g_1g_2g_3 \triangleright \ad_{Y_1}(\ad_{X_1}(Z_2))\\
&=\Phi_{g_1g_2g_3}(g_1g_2,g_3)\Phi_{g_1g_2g_3}(g_1,g_2)g_1 \triangleright \{g_2\triangleright [g_3 \triangleright \ad_{Y_1}(\ad_{X_1}(Z_2))]\}\\
&=-g_1 \triangleright \{g_2\triangleright [g_3 \triangleright \ad_{Y_1}(\ad_{X_1}(Z_2))]\}\\
&=g_1 \triangleright \{g_2\triangleright \ad_{Y_2}(\ad_{X_2}(Z_2))\}\\
&=\beta_1\beta_3 g_1 \triangleright \ad_{Y_2}(\ad_{X_2}(Z_2))\\
&=\beta_1\beta_2\beta_3\gamma_3  \ad_{Y_2}(\ad_{X_2}(Z_1))\\
&=-\beta_3 \ad_{Y_2}(\ad_{X_1}(Z_2)),
\end{split}
\end{equation}
and 
\begin{equation}\label{eq6.42}
\begin{split}
&g_1g_2g_3 \triangleright \ad_{Y_2}(\ad_{X_1}(Z_2))\\
&=\Phi_{g_1g_2g_3}(g_1g_2,g_3)\Phi_{g_1g_2g_3}(g_1,g_2)g_1 \triangleright \{g_2\triangleright [g_3 \triangleright \ad_{Y_2}(\ad_{X_1}(Z_2))]\}\\
&=-g_1 \triangleright \{g_2\triangleright [g_3 \triangleright \ad_{Y_2}(\ad_{X_1}(Z_2))]\}\\
&=\gamma_2 g_1 \triangleright \{g_2\triangleright \ad_{Y_1}(\ad_{X_2}(Z_2))\}\\
&=\beta_1\beta_3\gamma_2 g_1\triangleright \ad_{Y_1}(\ad_{X_2}(Z_2))\\
&=-\beta_1\beta_2\beta_3\gamma_2\gamma_3 \ad_{Y_1}(\ad_{X_2}(Z_1))\\
&=-\beta_3\gamma_2\gamma_3 \ad_{Y_1}(\ad_{X_2}(Z_1))\\
&=\beta_3\gamma_2 \ad_{Y_1}(\ad_{X_1}(Z_2)).
\end{split}
\end{equation}
Now set $\mathrm{i}=\sqrt{-1}$ and define
\begin{eqnarray*}
E&=&\ad_{X_1}(\ad_{Y_1}(Z_1))+\mathrm{i}\ad_{X_2}(\ad_{Y_1}(Z_2)),\\
F&=&\ad_{Y_1}(\ad_{X_1}(Z_2))+\mathrm{i} \beta_3 \ad_{Y_2}(\ad_{X_1}(Z_2)).
\end{eqnarray*}
From \eqref{eq6.39}-\eqref{eq6.42} we obtain
\begin{equation*}
\begin{split}
g_1g_2g_3\triangleright E&=g_1g_2g_3\triangleright \big[\ad_{X_1}(\ad_{Y_1}(Z_1))+\mathrm{i}\ad_{X_2}(\ad_{Y_1}(Z_2))\big]\\
&=- \ad_{X_2}(\ad_{Y_1}(Z_2))+\mathrm{i}\ad_{X_1}(\ad_{Y_1}(Z_1))\\
&=\mathrm{i}\big[\ad_{X_1}(\ad_{Y_1}(Z_1))+\mathrm{i}\ad_{X_2}(\ad_{Y_1}(Z_2))\big]\\
&=\mathrm{i}E,
\end{split}
\end{equation*}
and 
\begin{equation*}
\begin{split}
g_1g_2g_3\triangleright F&=g_1g_2g_3\triangleright \big[\ad_{Y_1}(\ad_{X_1}(Z_2))+\mathrm{i} \beta_3 \ad_{Y_2}(\ad_{X_1}(Z_2))\big]\\
&=-\beta_3 \ad_{Y_2}(\ad_{X_1}(Z_2))+\mathrm{i}\beta_3 \beta_3\gamma_2 \ad_{Y_1}(\ad_{X_1}(Z_2))\\
&=-\beta_3 \ad_{Y_2}(\ad_{X_1}(Z_2))+\mathrm{i} \ad_{Y_1}(\ad_{X_1}(Z_2))\\
&=\mathrm{i} \big[\ad_{Y_1}(\ad_{X_1}(Z_2))+\mathrm{i} \beta_3 \ad_{Y_2}(\ad_{X_1}(Z_2))\big]\\
&=\mathrm{i}F.
\end{split}
\end{equation*}
These two equalities show that the generalized Dynkin diagram $\mathcal{D}(W)$ contains the subdiagram 
\[ {\setlength{\unitlength}{1.5mm}
\Dchaintwo{}{$\mathrm{i} $}{$-1$}{$\mathrm{i} $}} \quad .\]
Comparing with the set $\mathfrak{D}$ of generalized Dynkin diagrams of finite type, we see that $\mathcal{D}(W)$ is not in $\mathfrak{D}$. Hence Corollary \ref{c4.3} yields $\GKdim(B(W))=\infty$.

This completes the proof of the lemma.
\end{proof}

\begin{lemma}\label{l6.6}
Suppose that 
\begin{equation}\label{eq6.47}
\beta_1\beta_2=-1,  \beta_3\sqrt{\gamma_2}=\pm 1, {\gamma_1\gamma_3}= 1,
\end{equation}
 then $\GKdim(B(V))=\infty.$
\end{lemma}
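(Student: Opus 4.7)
The plan is to mirror the four-step strategy used in the proof of Lemma \ref{l6.5}, adjusting each step for the sign change $\beta_1\beta_2 = -1$. Since the hypotheses on $V$ and on $\Phi$ are unchanged relative to Lemma \ref{l6.5} (only the value of $\beta_1\beta_2$ differs), Step 1 carries over verbatim: the $3$-cocycle $\Phi$ has the explicit form \eqref{eq6.14}, and we retain the Yetter-Drinfeld module data \eqref{eq6.2}--\eqref{eq6.10}.

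In Step 2, I would recompute the bases of the subspaces $\ad_{V_1}(V_2)$, $\ad_{V_2}(V_3)$, and $\ad_{V_1}(V_3)$. A direct coproduct calculation shows that $\D(\ad_{X_i}(Y_j))$ has a degree-$(e_1,e_2)$ primitive part proportional to $(1 \mp \beta_1\beta_2)\, X_i \otimes Y_j$, where the sign depends on whether $i = j$ or $i \neq j$. Under $\beta_1\beta_2 = -1$, this interchanges the roles of the $\ad_{X_i}(Y_j)$ from Lemma \ref{l6.5}: the vanishing relations become $\ad_{X_1}(Y_2) = \ad_{X_2}(Y_1) = 0$, so a basis of $\ad_{V_1}(V_2)$ is $\{\ad_{X_1}(Y_1), \ad_{X_2}(Y_2)\}$. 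The relations in $\ad_{V_2}(V_3)$ and $\ad_{V_1}(V_3)$ (namely \eqref{eq6.21}, \eqref{eq6.22}, \eqref{eq6.32}, \eqref{eq6.33}) do not involve $\beta_1\beta_2$ and still hold under the hypotheses $\beta_3\sqrt{\gamma_2} = \pm 1$ and $\gamma_1\gamma_3 = 1$, so the bases of these two subspaces are unchanged.

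Step 3 exhibits four nonzero, linearly independent elements in $W = \ad_{V_1}(\ad_{V_2}(V_3)) + \ad_{V_2}(\ad_{V_1}(V_3))$. I expect that a suitable quadruple such as $\ad_{X_1}(\ad_{Y_1}(Z_1))$, $\ad_{X_2}(\ad_{Y_1}(Z_2))$, $\ad_{Y_1}(\ad_{X_1}(Z_2))$, $\ad_{Y_2}(\ad_{X_1}(Z_2))$ continues to work; their coproducts differ from \eqref{eq6.34}--\eqref{eq6.37} only by explicit sign factors coming from $\beta_1\beta_2 = -1$, and linear independence is still verified by isolating the $(e_2, e_1+e_3)$-homogeneous part of the coproduct and invoking the bases of $V_2$ and $\ad_{V_1}(V_3)$, exactly as in \eqref{eq6.38}.

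Step 4 is the main obstacle, because changing $\beta_1\beta_2$ from $+1$ to $-1$ propagates an extra sign through each application of \eqref{eq6.21}, \eqref{eq6.22}, and their analogues when computing the $g_1g_2g_3$-action, paralleling \eqref{eq6.39}--\eqref{eq6.42}. Consequently the correct diagonalising combinations $E$ and $F$ may differ from those of Lemma \ref{l6.5}; I anticipate that the eigenvalue of $g_1g_2g_3$ will shift from $\mathrm{i}$ to a different fourth root of unity, which may force a re-choice of scalar coefficients in $E$ and $F$. In all cases the generalised Dynkin diagram of the diagonal subobject $\k E \oplus \k F$ of $W$ will have vertex labels in $\{\pm 1, \pm \mathrm{i}\}$ and edge label $-1$; comparing with the list $\mathfrak{D}$ of finite-type diagrams in \cite{H4} shows this diagram is not of finite type, so $\GKdim(B(\k E \oplus \k F)) = \infty$ by Corollary \ref{c4.3}. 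Since $A(W)$ is a pre-Nichols algebra of $W$ by Proposition \ref{p3.11}, we have $\GKdim(A(W)) \geq \GKdim(B(W)) \geq \GKdim(B(\k E\oplus \k F)) = \infty$ by \eqref{eq2.18} and \eqref{eq2.19}, and since $A(W)$ is an $\N_0$-graded subalgebra of $B(V)$ this forces $\GKdim(B(V)) = \infty$.
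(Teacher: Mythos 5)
Your proposal is essentially sound, but it takes a genuinely different and much longer route than the paper. The paper proves Lemma \ref{l6.6} in a few lines by a change of basis: setting $X'_1=X_2$, $X'_2=\gamma_1 X_1$ in $V_1$ replaces $\beta_1$ by $\beta'_1=-\beta_1$ while leaving \eqref{eq6.48}--\eqref{eq6.50} of the same shape and keeping $\gamma_1$, so the new constants satisfy $\beta'_1\beta_2=1$, $\beta_3\sqrt{\gamma_2}=\pm 1$, $\gamma_1\gamma_3=1$ and Lemma \ref{l6.5} applies verbatim. You instead re-run the whole four-step argument of Lemma \ref{l6.5} under $\beta_1\beta_2=-1$; this does work, and your sign bookkeeping is accurate (with $\beta_1\beta_2=-1$ the primitive parts of $\D(\ad_{X_i}(Y_j))$ are proportional to $1\pm\beta_1\beta_2$, so indeed $\ad_{X_1}(Y_2)=\ad_{X_2}(Y_1)=0$ while $\ad_{X_1}(Y_1),\ad_{X_2}(Y_2)$ survive; the relations \eqref{eq6.21}, \eqref{eq6.22}, \eqref{eq6.32}, \eqref{eq6.33} and the independence argument via the $(e_2,e_1+e_3)$-component are untouched; and in Step 4 the signs in \eqref{eq6.39}--\eqref{eq6.42} flip, but each $2$-dimensional block still carries an action of $g_1g_2g_3$ squaring to $-\id$, hence with eigenvalues $\pm\mathrm{i}$). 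The one point you should tighten is the final claim that \emph{in all cases} the diagram of $\k E\oplus\k F$ has edge label $-1$: this holds only if you choose $E$ and $F$ as eigenvectors with the \emph{same} eigenvalue ($\mathrm{i},\mathrm{i}$ or $-\mathrm{i},-\mathrm{i}$); choosing opposite eigenvalues gives $q_{EF}q_{FE}=1$, i.e.\ a disconnected diagram of finite type, and the argument would stall. Since both eigenvalues occur in each block, the good choice is always available, so this is a repairable imprecision rather than a fatal gap. What each approach buys: yours is self-contained and makes the dependence on the sign of $\beta_1\beta_2$ explicit; the paper's observation that the hypothesis class \eqref{eq6.47} is equivalent to \eqref{eq6.13} up to a basis change of $V_1$ avoids all recomputation and is the intended economy here.
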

\begin{proof}
Define a new basis $\{X'_1,X'_2\}$ of $V_1$ by $X'_1=X_2, X'_2=\gamma_1X_1$. Set $\beta'_1=-\beta_1$. Using \eqref{eq6.2}-\eqref{eq6.4} we obtain
\begin{eqnarray}
&&g_1\triangleright X'_i=-X'_i, \  \ i=1,2,\label{eq6.48}\\
&&g_2\triangleright X'_1=\beta'_1 X'_1, \ \  g_2\triangleright X'_2=-\beta'_1 X'_2,\label{eq6.49}\\
&&g_3\triangleright X'_1= X'_2, \ \  g_3\triangleright X'_2=\gamma_1 X'_1.\label{eq6.50}
\end{eqnarray}
Thus, with respect to the basis $\{X'_1,X'_2,Y_1,Y_2,Z_1,Z_2\}$ of $V$, the associated constants $\beta'_1, \gamma_1, \beta_2,\gamma_2, \beta_3$ and $\gamma_3$ satisfy
\begin{equation*}
\beta'_1\beta_2=1,  \beta_3\sqrt{\gamma_2}=\pm 1, {\gamma_1\gamma_3}= 1.
\end{equation*}
This implies $\GKdim(B(V))=\infty$ by Lemma \ref{l6.5}.
\end{proof}

Lemmas \ref{l6.2}-\ref{l6.6} complete the proof of Proposition \ref{p6.1}.

\subsection{General cases}
In this subsection, we determine $\GKdim(B(V))$ for an arbitrary minimal nondiagonal object $V\in \mathcal{F}_2^{(T2)}$. We first prove the following proposition.

\begin{proposition}\label{p6.7}
Let $V\in \mathcal{F}_2^{(T_2)}$ be an object in standard form. Then $\GKdim(B(V))=\infty.$
\end{proposition}
\begin{proof}
Let $V=V_1\oplus V_2\oplus V_3 \in \mathcal{F}_2^{(T_2)}$, and denote $g_i=g_{V_i}$ for $1\leq i\leq 3$. Since $V$ is of standard form, we have $G=\langle g_1\rangle \times \langle g_2\rangle \times g_3\rangle$.
Set $m_i=|g_i|$ for each $1\leq i\leq 3$. By \eqref{eq2.10}, up to $3$-cohomology, $\Phi$ can be expressed as 
\begin{equation}
\begin{split}
\Phi(g_1^{i_1}g_2^{i_2}g_3^{i_3},g_1^{j_1}g_2^{j_2}g_3^{j_3}, g_1^{k_1}g_2^{k_2}g_3^{k_3})&=\prod_{l=1}^{3}\zeta_{m_l}^{c_{l}i_{l}[\frac{j_{l}+k_{l}}{m_{l}}]}
\prod_{1\leq s<t\leq 3}\zeta_{m_{t}}^{c_{st}i_{t}[\frac{j_{s}+k_{s}}{m_{s}}]}   \times \zeta_{(m_{1},m_{2},m_{3})}^{ci_{1}j_{2}k_{3}},
\end{split}
\end{equation}
where $c_l, c_{st}$ and $c$ are integers satisfying $0\leq c_{l}< m_l$, $0\leq c_{st}< m_{t}$, $0\leq c< (m_{1},m_{2},m_{3})$.
Define the $3$-cocycle $\Psi$ on $G$ by 
$$\Psi(g_1^{i_1}g_2^{i_2}g_3^{i_3},g_1^{j_1}g_2^{j_2}g_3^{j_3}, g_1^{k_1}g_2^{k_2}g_3^{k_3})=\zeta_{(m_{1},m_{2},m_{3})}^{ci_{1}j_{2}k_{3}}.$$ 
Then 
\begin{equation}
\begin{split}
(\Phi\Psi^{-1})(g_1^{i_1}g_2^{i_2}g_3^{i_3},g_1^{j_1}g_2^{j_2}g_3^{j_3}, g_1^{k_1}g_2^{k_2}g_3^{k_3})=\prod_{l=1}^{3}\zeta_{m_l}^{c_{l}i_{l}[\frac{j_{l}+k_{l}}{m_{l}}]}
\prod_{1\leq s<t\leq 3}\zeta_{m_{t}}^{c_{st}i_{t}[\frac{j_{s}+k_{s}}{m_{s}}]}.
\end{split}
\end{equation}
By \eqref{eq2.11}, $\Phi\Psi^{-1}$ is an abelian 3-cocycle on $G$ . 

Let $\widehat{G}$ be the abelian group associated to $G$ defined in \eqref{eq2.12}, and let $\pi:\widehat{G} \to G$ be the projection given in \eqref{eq2.13}. By Lemma \ref{l3.4}, $B(V)$ is isomorphic to a Nichols algebra (still denoted by $B(V)$) in the category $^{\k \widehat{G}}_{\k \widehat{G}} \mathcal{YD}^{\pi^*{\Phi}}$. According to Proposition \ref{p2.3}, there exists a $2$-cochain $J$ on $\widehat{G}$ such that $\partial J=\pi^*(\Phi\Psi^{-1})$. Consequently, $${\pi^*{\Phi}\times \partial J^{-1}}={\pi^*{\Psi}},$$ and by Lemma \ref{l2.6}, $B(V^{J^{-1}})$ is a Nichols algebra of standard form in  $^{\k \widehat{G}}_{\k \widehat{G}} \mathcal{YD}^{\pi^*{\Psi}}$. Because $\pi^*{\Psi}$ is a $3$-cocycle on $\widehat{G}$ of the form \eqref{eq6.1}, Proposition \ref{p6.1} implies that 
$\GKdim(B(V^{J^{-1}}))=\infty$. Finally, applying Lemma \ref{l2.11} yields $\GKdim(B(V))=\infty$.
\end{proof}

To prove the main result of this section, we also require the following lemma.
\begin{lemma}\label{l6.9}
Let $H$ be a finite abelian group and let $P: H\to G$ be a group epimorphism. If $\Phi$ is a nonabelian $3$-cocycle on $G$, then the pull-back $P^*\Phi$ is also a nonabelian $3$-cocycle on $H$.
\end{lemma}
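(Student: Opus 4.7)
The natural strategy is to argue by contradiction and combine the two structural results about abelian and nonabelian $3$-cocycles that appear earlier: Proposition \ref{p2.3} (an abelian $3$-cocycle on any finite abelian group $K$ becomes a coboundary after pulling back along $\pi_K \colon \widehat{K}\to K$) and Proposition \ref{p2.4} (the pullback of a nonabelian $3$-cocycle along any epimorphism from a finite abelian group is never a coboundary).

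Here is the plan. Suppose, for contradiction, that $P^*\Phi$ is an abelian $3$-cocycle on $H$. Apply Proposition \ref{p2.3} to $H$ and the cocycle $P^*\Phi$: there exists the canonical group epimorphism $\pi_H\colon \widehat{H}\to H$ and a $2$-cochain $J$ on $\widehat{H}$ with
\begin{equation*}
\partial J \;=\; \pi_H^*(P^*\Phi) \;=\; (P\circ \pi_H)^*\Phi,
\end{equation*}
so $(P\circ \pi_H)^*\Phi$ is a $3$-coboundary on $\widehat{H}$.

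Now observe that $P\circ \pi_H\colon \widehat{H}\to G$ is a group epimorphism from the finite abelian group $\widehat{H}$ onto $G$, since both $\pi_H$ and $P$ are surjective. Because $\Phi$ is nonabelian on $G$ by hypothesis, Proposition \ref{p2.4} applies to this epimorphism and asserts that $(P\circ \pi_H)^*\Phi$ cannot be a $3$-coboundary on $\widehat{H}$. This contradicts the previous paragraph, so $P^*\Phi$ must be nonabelian on $H$.

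I do not expect any real obstacle here: the lemma is essentially a direct consequence of the two cited propositions once one notices that compositions of surjections are surjections, so the main task is simply to organize the contrapositive argument cleanly. One might also give a more direct proof using the characterization of abelianness via the diagonality of every twisted Yetter–Drinfeld module (Lemma \ref{l2.4}) by constructing, from a nondiagonal $V\in{}^{\k G}_{\k G}\mathcal{YD}^\Phi$, a nondiagonal object in ${}^{\k H}_{\k H}\mathcal{YD}^{P^*\Phi}$ via the pullback construction of Lemma \ref{l3.3} (with the roles of $G$ and $H$ reversed and a section of $P$), but the coboundary-based argument above is shorter and avoids having to assume a section exists.
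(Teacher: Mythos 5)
Your proposal is correct and follows essentially the same route as the paper: assume $P^*\Phi$ is abelian, pull back along the canonical epimorphism $\widehat{H}\to H$ to get a coboundary via Proposition \ref{p2.3}, and contradict Proposition \ref{p2.4} applied to the composite epimorphism $\widehat{H}\to G$. (The paper's proof cites Proposition \ref{p3.5} at this step, apparently a slip for Proposition \ref{p2.3}, which is the reference you use.)
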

\begin{proof}
Assume, to the contrary, that $P^*\Phi$ is an abelian $3$-cocycle on $H$. Let $\widehat{H}$ be the finite abelian group associated to $H$ defined by \eqref{eq2.12}, and let $\pi: \widehat{H} \to H$ be the canonical epimorphism defined in \eqref{eq2.13}. By Proposition \ref{p2.3}, the pull-back $\pi^*( P^*\Phi)$ is a $3$-coboundary on $\widehat{H}$. On the other hand, the composition $P\circ \pi: \widehat{H}\to G$ is a group epimorphism, and $$(P\circ \pi)^*\Phi=\pi^*( P^*\Phi).$$ By Proposition \ref{p2.4}, the pull-back $(P\circ \pi)^*\Phi$ is not a $3$-coboundary on $\widehat{H}$. This contradiction completes the proof. 
\end{proof}

\begin{proposition}\label{p6.9}
For every $V\in \mathcal{F}_2^{(T2)}$, we have $\GKdim(B(V))=\infty.$
\end{proposition}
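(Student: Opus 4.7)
The plan is to reduce the general case to the standard-form case already handled by Proposition \ref{l6.7} (restated as Proposition \ref{p6.8}), via the change-of-based-group technique of Lemma \ref{l3.4}. Let $V = V_1 \oplus V_2 \oplus V_3 \in \mathcal{F}_2^{(T2)}$ with $g_i = \Deg(V_i)$ of order $m_i$, so that $G = G_V = \langle g_1, g_2, g_3\rangle$. Let $H := \langle h_1\rangle \times \langle h_2\rangle \times \langle h_3\rangle$ with $|h_i|=m_i$, and let $\pi\colon H\to G$ be the surjection $h_i\mapsto g_i$ (well-defined because the orders match). Choose any set-theoretic section $\iota\colon G\to H$ with $\iota(g_i)=h_i$ for $i=1,2,3$.

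By Lemma \ref{l3.4}, the Nichols algebra $B(V)$, viewed as a graded pre-Nichols algebra, is isomorphic as a braided Hopf algebra to a graded pre-Nichols algebra of the induced object $\widetilde{V}\in {_{\k H}^{\k H}\mathcal{YD}^{\pi^{*}\Phi}}$. Since the isomorphism is induced by the identity on the underlying tensor algebra (cf.\ the proof of Lemma \ref{l3.4}), it sends the maximal Hopf ideal of $T(V)$ to that of $T(\widetilde V)$, hence identifies $B(V)$ with $B(\widetilde V)$ as graded algebras; in particular $\GKdim(B(V))=\GKdim(B(\widetilde V))$.

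It remains to verify that $\widetilde V$ belongs to $\mathcal{F}_2^{(T2)}$ (with respect to the category ${_{\k H}^{\k H}\mathcal{YD}^{\pi^{*}\Phi}}$) and is of standard form. Each simple summand $\widetilde V_i$ has $H$-degree $\iota(g_i)=h_i$ with action $h_i\,\widetilde\triangleright\, X=\pi(h_i)\triangleright X=g_i\triangleright X=-X$, so type (T2) is preserved. The support group is $G_{\widetilde V}=\langle h_1,h_2,h_3\rangle=H$, which, being a direct product of $\langle h_i\rangle$'s, puts $\widetilde V$ in standard form. A short computation (already carried out inside the proof of Lemma \ref{l3.3}) yields $\pi^{*}\Phi_{h_1}(h_2,h_3)/\pi^{*}\Phi_{h_1}(h_3,h_2)=\Phi_{g_1}(g_2,g_3)/\Phi_{g_1}(g_3,g_2)$, so Proposition \ref{p5.6}(1) preserves $\dim\widetilde V_i=2$. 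Finally, $\Phi_{G_V}$ is nonabelian by Proposition \ref{p5.4}, and then $\pi^{*}\Phi$ is nonabelian on $H$ by Lemma \ref{l6.9}, so Proposition \ref{p5.4} (applied in $H$) ensures $\widetilde V$ is minimal nondiagonal. Thus $\widetilde V\in\mathcal{F}_2^{(T2)}$ is of standard form, Proposition \ref{p6.8} gives $\GKdim(B(\widetilde V))=\infty$, and the conclusion $\GKdim(B(V))=\infty$ follows.

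The only potential obstacle is bookkeeping: one must ensure that all four defining properties of $\mathcal{F}_2^{(T2)}$ (being minimal nondiagonal, full support in the ambient group, each factor $2$-dimensional, each factor of type (T2)) transfer under $\pi$ and $\iota$, and that the standard-form hypothesis of Proposition \ref{p6.8} is obtained for free by our choice $\iota(g_i)=h_i$. Each check reduces to a line from the results already cited.
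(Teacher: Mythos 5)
Your proposal is correct and takes essentially the same route as the paper: the paper's own proof also forms $H=\langle h_1\rangle\times\langle h_2\rangle\times\langle h_3\rangle$ with $|h_i|=|g_i|$, pulls $B(V)$ back along the epimorphism $h_i\mapsto g_i$ via Lemma \ref{l3.4} (with Lemma \ref{l3.3} preserving simplicity, Lemma \ref{l6.9} preserving nonabelianness, and Proposition \ref{p5.4} giving minimal nondiagonality), and then invokes Proposition \ref{p6.8} for the standard-form case. Your extra remarks (identifying the image with $B(\widetilde V)$, rechecking the dimensions) are harmless refinements of steps the paper treats as immediate.
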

\begin{proof}
Let $V=V_1\oplus V_2\oplus V_3\in \mathcal{F}_2^{(T2)}$ be a direct sum of simple objects, and denote $g_i=g_{V_i}$ for $1\leq i\leq 3$. By the definition of $\mathcal{F}_2^{(T2)}$, we have $G=G_V=\langle g_1,g_2,g_3\rangle$. Define a finite abelian group  $H=\langle h_1\rangle \times \langle h_2\rangle\times \langle h_3\rangle$ with $|h_i|=|g_i|$ for $1\leq i\leq 3$. Then the map
\begin{eqnarray*}
P: H \To G, \ \ h^{i_1}_1h^{i_2}_2h^{i_3}_3 \to g^{i_1}_1g^{i_2}_2g^{i_3}_3
\end{eqnarray*}
is clearly a group epimorphism. Let $\iota:G\to H$ be a section of $P$ satisfying $\iota(g_i)=h_i, 1\leq i\leq 3$. By Lemma \ref{l3.4}, the Nichols algebra $B(V)$ also lives in $^{\k H}_{\k H} \mathcal{YD}^{P^*\Phi}$, with the $H$-degree of $V_i$ equal to $\iota(g_i)=h_i$ for each $i$. Because $H=\langle h_1\rangle \times \langle h_2\rangle\times \langle h_3\rangle$, the Nichols algebra $B(V)$ is in standard form in the category $^{\k H}_{\k H} \mathcal{YD}^{P^*\Phi}$.

On the other hand,  Lemma \ref{l3.3} guarantees that $V_1,V_2,V_3$ remain simple in $^{\k H}_{\k H} \mathcal{YD}^{P^*\Phi}$. Moreover, Lemma \ref{l6.9} tell us that the pull-back $P^*\Phi$ is a nonabelian $3$-cocycle on $H$. Hence, by Proposition \ref{p5.4}, $V$ is a minimal nondiagonal object in $^{\k H}_{\k H} \mathcal{YD}^{P^*\Phi}$. Finally, because each $V_i$ is of type (T2) for $1\leq i\leq 3$, we obtain $\GKdim(B(V))=\infty$ by Proposition \ref{p6.7}.
\end{proof}

\section{Nichols algebras of objects in $\mathcal{F}_2^{T}$}
So far we have proved that $\GKdim(B(V))=\infty$ for each $V\in \mathcal{F}_2^{(T2)}$. In this section, we shall prove that the same conclusion holds for all $V\in \mathcal{F}_2^{T}$. To ensure that $\mathcal{F}_2^{T}$ is non-empty, we continue to assume that $G$ can be generated by three elements and that $\Phi$ is a nonabelian $3$-cocycle on $G$.

Throughout this section, let $V=V_1\oplus V_2\oplus V_3\in \mathcal{F}_2^{T}$ be a fixed object, where each $V_i$ is a simple object of type (T1)-(T3). Write $g_i=g_{V_i}$ and $m_i=|g_i|$ for $1\leq i\leq 3$. By the definition of $\mathcal{F}_2^{T}$, we have $G=G_V=\langle g_1,g_2,g_3\rangle.$

\begin{proposition}\label{p7.1}
If $V\in \mathcal{F}_2^{(T1)}$, then $\GKdim{(B(V))}=\infty$.
\end{proposition}

\begin{proof}
Assume that $V\in  \mathcal{F}_2^{(T1)}$, thus its simple subobjects $V_1,V_2, V_3$ are all of type (T1). We shall show that $\GKdim(B(V_1\oplus V_2))=\infty$. Since $B(V_1\oplus V_2)$ is of diagonal type, it suffices to prove that the generalized Dynkin diagram $\mathcal{D}(V_1\oplus V_2)$ is not of finite type.

By Proposition \ref{p5.6}, $V_1$ has a basis $\{X_1,X_2\}$ such that
\begin{eqnarray*}
&&g_1\triangleright X_i= X_i,\  \ 1\leq i\leq 2,\\
&&g_2\triangleright X_1=\beta_1 X_1,\ g_2\triangleright X_2=-\beta_1 X_2, \\
&&g_3\triangleright X_1=X_{2}, \ g_3\triangleright X_2=\gamma_1 X_1,
\end{eqnarray*}
where $\beta_1, \gamma_1\in \k^*$ satisfy $\beta_1^{m_2}=\prod_{l=1}^{m_2-1}\Phi_{g_1}(g_2, g_2^l),$ 
$\gamma_1^{\frac{m_3}{2}}=\prod_{l=1}^{m_3-1}\Phi_{g_1}(g_3, g_3^l).$

Similarly, $V_2$ has a basis $\{Y_1,Y_2\}$ such that 
\begin{eqnarray*}
&&g_2\triangleright Y_i= Y_i,\  \ 1\leq i\leq 2,\\
&&g_1\triangleright Y_1=\beta_2 Y_1,\ g_1\triangleright Y_2=-\beta_2 Y_2, \\
&&g_3\triangleright Y_1=Y_{2}, \ g_3\triangleright Y_2=\gamma_2 Y_1,
\end{eqnarray*}
with $\beta_2, \gamma_2\in \k^*$ satisfying $\beta_2^{m_1}=\prod_{l=1}^{m_1-1}\Phi_{g_2}(g_1, g_1^l),$ 
$\gamma_2^{\frac{m_3}{2}}=\prod_{l=1}^{m_3-1}\Phi_{g_2}(g_3, g_3^l).$
Clearly, $\{X_1,X_2,Y_1,Y_2\}$ forms a diagonal basis of $V_1\oplus V_2$.

If $\beta_1\beta_2\neq \pm 1$, then $\mathcal{D}(V_1\oplus V_2)$ is the $4$-cycle:
\[ {\setlength{\unitlength}{1.5mm}
\Echainfour{$\beta_1\beta_2$}{$1$}{$1$}{$1$}{$1$}{$-\beta_1\beta_2$}{$\beta_1\beta_2$}{$-\beta_1\beta_2$}} \quad .\] 
In this case, we have $\GKdim(B(V_1\oplus V_2))=\infty$ by Corollary \ref{c4.5}. 

If $\beta_1\beta_2=\pm 1$, the diagram $\mathcal{D}(V_1\oplus V_2)$ becomes
\[ {\setlength{\unitlength}{1.5mm}
\Chainfour{$1$}{$-1$}{$1$}{$1$}{$-1$}{$1$}} \quad .\] 
Comparing with the collection $\mathfrak{D}$ of generalized Dynkin diagrams of finite type, we see that $\mathcal{D}(V_1\oplus V_2)$ does not belong to $\mathfrak{D}$. Hence, by Corollary \ref{c4.3}, we again conclude $\GKdim(B(V_1\oplus V_2))=\infty$.
\end{proof}

\begin{proposition}\label{p7.2}
If $V\in \mathcal{F}_2^{(T3)}$, then $\GKdim{(B(V))}=\infty$.
\end{proposition} 

\begin{proof}
Assume $V\in \mathcal{F}_2^{(T3)}$, thus $V_1,V_2,V_3$ are all of type (T3). By Proposition \ref{p5.6}, there exists a basis $\{X_1,X_2\}$ of $V_1$ such that 
\begin{eqnarray*}
&&g_1\triangleright X_i= \zeta_3 X_i,\  \ 1\leq i\leq 2;\\
&&g_2\triangleright X_1=\beta_1 X_1,\ g_2\triangleright X_2=-\beta_1 X_2; \\
&&g_3\triangleright X_1=X_{2}, \ g_3\triangleright X_2=\gamma_1 X_1,
\end{eqnarray*}
where $\zeta_3$ is a primitive third root of unity, and $\beta_1, \gamma_1\in \k^*$ satisfy $\beta_1^{m_2}=\prod_{l=1}^{m_2-1}\Phi_{g_1}(g_2, g_2^l),$ 
$\gamma_1^{\frac{m_3}{2}}=\prod_{l=1}^{m_3-1}\Phi_{g_1}(g_3, g_3^l).$

Similarly, $V_2$ has a basis $\{Y_1,Y_2\}$ such that 
\begin{eqnarray*}
&&g_2\triangleright Y_i= \zeta'_3Y_i,\  \ 1\leq i\leq 2;\\
&&g_1\triangleright Y_1=\beta_2 Y_1,\ g_1\triangleright Y_2=-\beta_2 Y_2; \\
&&g_3\triangleright Y_1=Y_{2}, \ g_3\triangleright Y_2=\gamma_2 Y_1.
\end{eqnarray*}
Here $\zeta'_3$ is also a primitive third root of unity, and $\beta_2, \gamma_2\in \k^*$ satisfy $\beta_2^{m_1}=\prod_{l=1}^{m_1-1}\Phi_{g_2}(g_1, g_1^l),$ 
$\gamma_2^{\frac{m_3}{2}}=\prod_{l=1}^{m_3-1}\Phi_{g_2}(g_3, g_3^l).$ It is clear that $\{X_1,X_2,Y_1,Y_2\}$ is a diagonal basis of $V_1\oplus V_2$.

If $\beta_1\beta_2\neq \pm 1$, the generalized Dynkin diagram $\mathcal{D}(V_1\oplus V_2)$ is 
\[ {\setlength{\unitlength}{1.5mm}
\Nchainfour{$\zeta_3$}{$\zeta_3$}{$\zeta'_3$}{$\zeta'_3$}{$-\beta_1\beta_2$}{$\zeta_3^2$}{$-\beta_1\beta_2$}{$
{\zeta'_3}^2$}{$\beta_1\beta_2$}} \quad .\] 
If $\beta_1\beta_2= \pm 1$, the diagram $\mathcal{D}(V_1\oplus V_2)$ becomes
\[ {\setlength{\unitlength}{1.5mm}
\Echainfour{${\zeta'_3}^2$}{$\zeta'_3$}{$\zeta'_3$}{$\zeta_3$}{$\zeta_3$}{$-1$}{$\zeta_3^2$}{$-1$}} \quad .\] 
In both cases, $\mathcal{D}(V_1\oplus V_2)$ contains a $4$-cycle. By Corollary \ref{c4.5}, we get $\GKdim(B(V_1\oplus V_2))=\infty$.
\end{proof}
 
\begin{proposition}\label{p7.3}
If the simple objects $V_1,V_2$ and $V_3$ are not all of the same type among (T1)-(T3), then $\GKdim(B(V))=\infty.$
\end{proposition}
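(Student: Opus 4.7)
The plan is to reduce to a two-summand object. Since $V_1, V_2, V_3$ are not all of the same type, after relabeling I may assume $V_1$ and $V_2$ belong to distinct types among $\{\mathrm{T1},\mathrm{T2},\mathrm{T3}\}$. The support group $G_{V_1\oplus V_2}=\langle g_1,g_2\rangle$ is a direct product of at most two cyclic groups, so Lemma \ref{l2.2} forces $\Phi_{G_{V_1\oplus V_2}}$ to be abelian and Lemma \ref{l2.4} ensures that $V_1\oplus V_2$ is of diagonal type. Since $B(V_1\oplus V_2)$ is an $\N_0$-graded subalgebra of $B(V)$, the inequality \eqref{eq2.19} reduces the task to proving $\GKdim(B(V_1\oplus V_2))=\infty$, and Corollary \ref{c4.3} further reduces this to checking that the generalized Dynkin diagram $\mathcal{D}(V_1\oplus V_2)$ does not lie in $\mathfrak{D}$.

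First I would invoke Proposition \ref{p5.6} (with $n=2$, so that $\Phi_{g_1}(g_2,g_3)/\Phi_{g_1}(g_3,g_2)=-1$) to pick bases $\{X_1,X_2\}\subset V_1$ and $\{Y_1,Y_2\}\subset V_2$ assembling into a diagonal basis of $V_1\oplus V_2$. The vertex labels on the $V_i$-part are both equal to $\alpha_i\in\{1,-1,\zeta_3\}$, dictated by the type of $V_i$; the intra-$V_i$ edge has label $\alpha_i^2$ and is present precisely when $V_i$ is of type $\mathrm{T3}$; and a direct braiding computation from \eqref{e6.5}--\eqref{e6.7} and their mirrored versions for $V_2$ yields $\widetilde{q}_{X_sY_t}=\beta_1\beta_2(-1)^{s+t}$ for the cross-edges, where $\beta_1,\beta_2\in\k^*$ are the scalars supplied by Proposition \ref{p5.6}. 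If $\beta_1\beta_2\neq\pm 1$, all four cross-edges appear, $\mathcal{D}(V_1\oplus V_2)$ contains a $4$-cycle, and Corollary \ref{c4.5} finishes the proof.

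The remaining case $\beta_1\beta_2=\pm 1$ I would handle by enumerating the three possible unordered pairs of distinct types. For $\{\mathrm{T1},\mathrm{T2}\}$, the result is a disjoint union of two $2$-chains, each with vertex labels $1$ and $-1$ joined by an edge of label $-1$. For $\{\mathrm{T1},\mathrm{T3}\}$ one obtains a connected $4$-chain whose two end-vertices are labeled $1$, the two interior vertices are labeled $\zeta_3$ and connected by an edge of label $\zeta_3^{-1}$, while the two remaining edges carry label $-1$. For $\{\mathrm{T2},\mathrm{T3}\}$ one likewise obtains a $4$-chain with vertex labels $-1,\zeta_3,\zeta_3,-1$ and edge labels $-1,\zeta_3^{-1},-1$. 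In the first two situations, the presence of a vertex labeled $1$ immediately rules the diagram out of $\mathfrak{D}$; in the third, the Cartan condition at the $\zeta_3$-labeled vertex joined to a $-1$-labeled vertex by an edge of label $-1$ would demand an integer $a$ with $\zeta_3^{-a}=-1$, which admits no solution, so this diagram is also absent from $\mathfrak{D}$. Corollary \ref{c4.3} then supplies $\GKdim(B(V_1\oplus V_2))=\infty$ in every case, hence $\GKdim(B(V))=\infty$.

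The main obstacle is the bookkeeping of the three subcases above together with the case-by-case comparison against Heckenberger's tables in \cite{H4}; the combinatorial mechanism, however---a vertex label equal to $1$, or a Cartan integer failing to exist---mirrors exactly the pattern already exploited in Propositions \ref{p7.1} and \ref{p7.2}, so no genuinely new technique should be required.
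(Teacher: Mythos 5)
Your overall strategy coincides with the paper's: reduce to $B(V_1\oplus V_2)$, take the diagonal bases from Proposition \ref{p5.6}, split on whether $\beta_1\beta_2=\pm 1$, get a $4$-cycle and apply Corollary \ref{c4.5} in the generic case, and compare the residual diagrams with $\mathfrak{D}$ otherwise; your computed diagrams in the three subcases agree with those displayed in the paper, and your exclusion of the $\{\mathrm{T1},\mathrm{T2}\}$ and $\{\mathrm{T1},\mathrm{T3}\}$ diagrams is sound, since a vertex labelled $1$ meeting an edge with label $\neq 1$ yields $(\ad x_i)^m(x_j)\neq 0$ for all $m$ in characteristic zero, so no such connected diagram occurs in $\mathfrak{D}$.

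However, your treatment of the $\{\mathrm{T2},\mathrm{T3}\}$ case with $\beta_1\beta_2=\pm 1$ has a genuine gap. You exclude the $4$-chain with vertex labels $-1,\zeta_3,\zeta_3,-1$ and edge labels $-1,\zeta_3^{-1},-1$ on the grounds that there is no integer $a$ with $\zeta_3^{-a}=-1$. That is the condition for the diagram to be of \emph{Cartan type} at that vertex, which is not necessary for membership in $\mathfrak{D}$: Heckenberger's list contains many non-Cartan (e.g.\ super-type) diagrams, such as vertices $q,-1$ joined by an edge $q^{-1}$. Worse, the specific rank-$2$ subdiagram you point to (vertices $\zeta_3$ and $-1$, edge $-1$) is itself an arithmetic root system: since $(3)_{\zeta_3}=0$, the Cartan entry at the $\zeta_3$-vertex is $-2$, the reflection produces the diagram with vertices $\zeta_3,-\zeta_3$ and edge $-\zeta_3^{2}$, and the resulting Weyl groupoid is finite of type $B_2$ (this is the well-known $108$-dimensional rank-two Nichols algebra in Heckenberger's rank-two list). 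So your local obstruction evaporates, and nothing in your argument actually shows that the full rank-$4$ chain lies outside $\mathfrak{D}$. The conclusion is still correct, but it has to be established as the paper does, by comparing the whole diagram against Heckenberger's tables (or by an explicit Weyl-groupoid/reflection computation on the rank-$4$ diagram), not by the Cartan-integer criterion at a single edge.
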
 
\begin{proof}
Without loss of generality, we may assume that $V_1$ and $V_2$ are of distinct types.  
Then there are three possible cases: 
\begin{itemize}
\item[(1)] $V_1$ is of type (T1) and $V_2$ is of type (T2); \item[(2)] $V_1$ is of type (T1) and $V_2$ is of type (T3); \item[(3)] $V_1$ is of type (T2) and $V_2$ is of type (T3).  
\end{itemize}
We will examine each case separately.

Case (1). $V_1$ is of type (T1) and $V_2$ is of type (T2). 

By Proposition \ref{p5.6}, there is a basis $\{X_1,X_2\}$ of $V_1$ such that 
\begin{eqnarray}
&&g_1\triangleright X_i= X_i,\  \ 1\leq i\leq 2,\label{e7.1} \\
&&g_2\triangleright X_1=\beta_1 X_1,\ g_2\triangleright X_2=-\beta_1 X_2,\label{e7.2} \\
&&g_3\triangleright X_1=X_{2}, \ g_3\triangleright X_2=\gamma_1 X_1,\label{e7.3}
\end{eqnarray}
where $\beta_1, \gamma_1\in \k^*$ satisfy $\beta_1^{m_2}=\prod_{l=1}^{m_2-1}\Phi_{g_1}(g_2, g_2^l),$ $\gamma_1^{\frac{m_3}{2}}=\prod_{l=1}^{m_3-1}\Phi_{g_1}(g_3, g_3^l).$

Similarly,  $V_2$ has a basis $\{Y_1,Y_2\}$ such that 
\begin{eqnarray}
&&g_2\triangleright Y_i=-Y_i,\  \ 1\leq i\leq 2,\\
&&g_1\triangleright Y_1=\beta_2 Y_1,\ g_1\triangleright Y_2=-\beta_2 Y_2, \\
&&g_3\triangleright Y_1=Y_{2}, \ g_3\triangleright Y_2=\gamma_2 Y_1,
\end{eqnarray} 
where $\beta_2, \gamma_2\in \k^*$ satisfy $\beta_2^{m_1}=\prod_{l=1}^{m_1-1}\Phi_{g_2}(g_1, g_1^l),$ 
$\gamma_2^{\frac{m_3}{2}}=\prod_{l=1}^{m_3-1}\Phi_{g_2}(g_3, g_3^l).$ Hene we obtain a diagonal basis $\{X_1,X_2,Y_1,Y_2\}$ for $V_1\oplus V_2$.

If $\beta_1\beta_2\neq \pm 1$, the generalized Dynkin diagram $\mathcal{D}(V_1\oplus V_2)$ is the $4$-cycle:
\[ {\setlength{\unitlength}{1.5mm}
\Echainfour{$\beta_1\beta_2$}{$-1$}{$1$}{$1$}{$-1$}{$-\beta_1\beta_2$}{$\beta_1\beta_2$}{$-\beta_1\beta_2$}} \quad .\] 
Corollary \ref{c4.5} then forces $\GKdim(B(V_1\oplus V_2))=\infty$.

If $\beta_1\beta_2=\pm 1$, the diagram $\mathcal{D}(V_1\oplus V_2)$ becomes
\[ {\setlength{\unitlength}{1.5mm}
\Chainfour{$1$}{$-1$}{$-1$}{$1$}{$-1$}{$-1$}} \quad .\] 
Comparing with the set $\mathfrak{D}$ of generalized Dynkin diagrams of finite type, we see that $\mathcal{D}(V_1\oplus V_2)$ is not in $\mathfrak{D}$. Hence, by Corollary \ref{c4.3}, we again obtain $\GKdim(B(V_1\oplus V_2))=\infty$.

(2). $V_1$ is of type (T1) and $V_2$ is of type (T3). 

Take the same basis $\{X_1, X_2\}$ of $V_1$ as in \eqref{e7.1}-\eqref{e7.3}. By Proposition \ref{p5.6}, $V_2$ has a basis $\{Y_1,Y_2\}$ such that
\begin{eqnarray*}
&&g_2\triangleright Y_i=\zeta_3 Y_i,\  \ 1\leq i\leq 2,\\
&&g_1\triangleright Y_1=\beta_2 Y_1,\ g_1\triangleright Y_2=-\beta_2 Y_2, \\
&&g_3\triangleright Y_1=Y_{2}, \ g_3\triangleright Y_2=\gamma_2 Y_1.
\end{eqnarray*} 
Here $\beta_2, \gamma_2\in \k^*$ satisfy $\beta_2^{m_1}=\prod_{l=1}^{m_1-1}\Phi_{g_2}(g_1, g_1^l),$ 
$\gamma_2^{\frac{m_3}{2}}=\prod_{l=1}^{m_3-1}\Phi_{g_2}(g_3, g_3^l).$ So we obtain a diagonal basis $\{X_1, X_2, Y_1,Y_2\}$ of $V_1\oplus V_2$.

If $\beta_1\beta_2\neq \pm 1$, the diagram $\mathcal{D}(V_1\oplus V_2)$ is 
\[ {\setlength{\unitlength}{1.5mm}
\Rchainfour{$1$}{$\zeta_3$}{$1$}{$\zeta_3$}{$-\beta_1\beta_2$}{$\beta_1\beta_2$}{$-\beta_1\beta_2$}{$\beta_1\beta_2$}{$\zeta_2^3$}} \quad .\] 
Because it contains a $4$-cycle, we get $\GKdim(B(V_1\oplus V_2))=\infty$.

If $\beta_1\beta_2= \pm 1$, then $\mathcal{D}(V_1\oplus V_2)$ becomes
\[ {\setlength{\unitlength}{1.5mm}
\Mchainfour{$1$}{$-1$}{$\zeta_3$}{$\zeta_3^2$}{$\zeta_3$}{$-1$}{$1$}} \quad .\] 
Again, this diagram does not belong to $\mathfrak{D}$. Consequently, we have $\GKdim(B(V_1\oplus V_2))=\infty$.

(3). $V_1$ is of type (T2) and $V_2$ is of type (T3). 

By Proposition \ref{p5.6}, $V_1$ has a basis $\{X_1,X_2\}$ such that 
\begin{eqnarray*}
&&g_1\triangleright X_i= -X_i,\  \ 1\leq i\leq 2,\label{eq7.1} \\
&&g_2\triangleright X_1=\beta_1 X_1,\ g_2\triangleright X_2=-\beta_1 X_2,\label{eq7.2} \\
&&g_3\triangleright X_1=X_{2}, \ g_3\triangleright X_2=\gamma_1 X_1,\label{eq7.3}
\end{eqnarray*}
where $\beta_1, \gamma_1\in \k^*$ satisfy $\beta_1^{m_2}=\prod_{l=1}^{m_2-1}\Phi_{g_1}(g_2, g_2^l),$ $\gamma_1^{\frac{m_3}{2}}=\prod_{l=1}^{m_3-1}\Phi_{g_1}(g_3, g_3^l).$

Similarly,  $V_2$ has a basis $\{Y_1,Y_2\}$ such that 
\begin{eqnarray*}
&&g_2\triangleright Y_i=\zeta_3 Y_i,\  \ 1\leq i\leq 2,\\
&&g_1\triangleright Y_1=\beta_2 Y_1,\ g_1\triangleright Y_2=-\beta_2 Y_2, \\
&&g_3\triangleright Y_1=Y_{2}, \ g_3\triangleright Y_2=\gamma_2 Y_1,
\end{eqnarray*} 
where $\beta_2, \gamma_2\in \k^*$ satisfy $\beta_2^{m_1}=\prod_{l=1}^{m_1-1}\Phi_{g_2}(g_1, g_1^l),$ 
$\gamma_2^{\frac{m_3}{2}}=\prod_{l=1}^{m_3-1}\Phi_{g_2}(g_3, g_3^l).$

If $\beta_1\beta_2\neq \pm 1$, the diagram $\mathcal{D}(V_1\oplus V_2)$ is 
\[ {\setlength{\unitlength}{1.5mm}
\Rchainfour{$-1$}{$\zeta_3$}{$-1$}{$\zeta_3$}{$-\beta_1\beta_2$}{$\beta_1\beta_2$}{$-\beta_1\beta_2$}{$\beta_1\beta_2$}{$\zeta_2^3$}} \quad .\] 
This implies $\GKdim(B(V_1\oplus V_2))=\infty$ since $\mathcal{D}(V_1\oplus V_2)$ cotains a $4$-cycle.

If $\beta_1\beta_2= \pm 1$, the diagram $\mathcal{D}(V_1\oplus V_2)$ becomes
\[ {\setlength{\unitlength}{1.5mm}
\Mchainfour{$-1$}{$-1$}{$\zeta_3$}{$\zeta_3^2$}{$\zeta_3$}{$-1$}{$-1$}} \quad .\] 
Once more, the diagram $\mathcal{D}(V_1\oplus V_2)$ is not of finite type, therefore $\GKdim(B(V_1\oplus V_2))=\infty$.
\end{proof}

We are now ready to prove the main result of this section.

\begin{proposition}\label{p7.4}
For every $U\in \mathcal{F}_2$, we have $\GKdim(B(U))=\infty$.
\end{proposition}
\begin{proof}
Let $U\in \mathcal{F}_2$. We can decompose $U$ as a direct sum $U=U_1\oplus U_2\oplus U_3$ of simple objects. If $U\notin \mathcal{F}_2^{T}$, then for some $i\in \{1,2,3\}$, the simple object $U_i$ is not of type (T1), (T2) or (T3). Consequently, $\GKdim(B(U_i))=\infty$, this forces $\GKdim(B(U))=\infty$. On the other hand, if $U\in \mathcal{F}_2^{T}$, we also have $\GKdim(B(U))=\infty$ by Propositions \ref{p6.9} and \ref{p7.1}-\ref{p7.3}.
\end{proof}

Combining  propositions \ref{p5.9} and \ref{p7.4}, we obtain the following conclusion.

\begin{corollary}\label{c7.5}
Let $U\in {_{\k G}^{\k G} \mathcal{YD}^\Phi}$ be a minimal nondiagonal object such that $G_U=G$. Then we have $\GKdim(B(U))=\infty$.
\end{corollary}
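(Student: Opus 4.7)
The plan is to derive the corollary as a direct combination of Theorems \ref{t5.9} and \ref{t7.4}, with Propositions \ref{p5.4} and \ref{p5.6} supplying the structural bookkeeping that places $U$ inside some $\mathcal{F}_n$.

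First, I would unpack what ``minimal nondiagonal with $G_U = G$'' gives us. By Proposition \ref{p5.4}, any minimal nondiagonal object must decompose as a direct sum of exactly three simple objects, $U = U_1 \oplus U_2 \oplus U_3$, and $\Phi_{G_U}$ must be a nonabelian $3$-cocycle on $G_U$. Since the hypothesis says $G_U = G$, this confirms that $G$ is generated by three elements and $\Phi$ is nonabelian, so the sets $\mathcal{F}_n$ defined in Section 5 are meaningful.

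Next, I would invoke Proposition \ref{p5.6}(1), which forces $\dim(U_1) = \dim(U_2) = \dim(U_3) = n$, where $n$ is the order of $\Phi_{g_1}(g_2,g_3)/\Phi_{g_1}(g_3,g_2)$, with $g_i = \Deg(U_i)$. Since $U$ is nondiagonal, Proposition \ref{p5.6}(2) yields $n \geq 2$. Thus $U$ satisfies precisely the two conditions defining $\mathcal{F}_n$: its support group equals $G$, and each simple summand has dimension $n$. Hence $U \in \mathcal{F}_n$ for some integer $n \geq 2$.

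Finally, I would conclude by a dichotomy on $n$. If $n \geq 3$, Theorem \ref{t5.9} gives $\GKdim(B(U)) = \infty$ directly. If $n = 2$, then $U \in \mathcal{F}_2$, and Theorem \ref{t7.4} yields the same conclusion. Either way, $\GKdim(B(U)) = \infty$. There is no genuine obstacle here: the entire substantive content of the corollary has already been absorbed into Theorems \ref{t5.9} and \ref{t7.4}, and the corollary is essentially a bookkeeping statement that the classification of minimal nondiagonal objects (with $G_U = G$) via the dimension $n$ of their simple summands exhausts the cases covered by those theorems.
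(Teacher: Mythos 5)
Your proof is correct and follows the same route as the paper, which obtains the corollary by combining Theorems \ref{t5.9} and \ref{t7.4}; your use of Propositions \ref{p5.4} and \ref{p5.6} merely makes explicit the bookkeeping that places $U$ in $\mathcal{F}_n$ for some $n\geq 2$, which the paper leaves implicit.
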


\section{Classification}
In this section, we prove that every pre-Nichols algebra of a nondiagonal object in $_{\k G}^{\k G} \mathcal{YD}^\Phi$ has infinite GKdim. As applications, we obtain a classification of all finite-dimensional objects in $_{\k G}^{\k G} \mathcal{YD}^\Phi$ whose Nichols algebras have finite GKdim. 

Let $V\in {_{\k G}^{\k G} \mathcal{YD}^\Phi}$ be a minimal nondiagonal object. By Lemma \ref{l3.1}, $V$ is naturally an object in $_{\k G_V}^{\k G_V} \mathcal{YD}^{\Phi_{G_V}}$. It may happen, however, that $V$ is no longer a minimal nondiagonal object in the latter category when $G_V\neq G$. We need the following lemma to address this situation. 
\begin{lemma}\label{l8.1}
Let $V\in {_{\k G}^{\k G} \mathcal{YD}^\Phi}$ be a minimal nondiagonal object. Then there exists a minimal nondiagonal object $V'$ in $_{\k G_V}^{\k G_V} \mathcal{YD}^{\Phi_{G_V}}$ such that $V'\subset V$ as objects in $_{\k G_V}^{\k G_V} \mathcal{YD}^{\Phi_{G_V}}$ and $G_{V'}=G_V$.
\end{lemma}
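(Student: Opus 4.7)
The plan is to apply the characterization of minimal nondiagonal objects given in Proposition \ref{p5.4} in both ambient categories. By that proposition, $V = V_1 \oplus V_2 \oplus V_3$ is a direct sum of three simple objects in ${_{\k G}^{\k G} \mathcal{YD}^\Phi}$ with $g_i := \Deg(V_i)$, so that $G_V = \langle g_1, g_2, g_3 \rangle$, and moreover $\Phi_{G_V}$ is a nonabelian $3$-cocycle on $G_V$. The desired $V'$ should therefore again be a sum of three simple objects of $G_V$-degrees $g_1, g_2, g_3$.

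First I would observe, via Lemma \ref{l3.1}, that $V$ is naturally an object of ${_{\k G_V}^{\k G_V} \mathcal{YD}^{\Phi_{G_V}}}$: each $V_i$ remains homogenous of $G_V$-degree $g_i$ and is a $(G_V, (\Phi_{G_V})_{g_i})$-projective representation. However, $V_i$ need not remain simple after restricting the action from $G$ to $G_V$, so one cannot in general take $V' = V$.

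Next, I would use semisimplicity of ${_{\k G_V}^{\k G_V} \mathcal{YD}^{\Phi_{G_V}}}$ (equivalently, semisimplicity of the twisted group algebras over $\k$) to decompose each $V_i$ into simple $G_V$-subobjects, and pick any nonzero simple summand $W_i \subset V_i$ in this category. By construction $\Deg(W_i) = g_i$, and $W_i \neq 0$ because $V_i \neq 0$.

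Finally, set $V' := W_1 \oplus W_2 \oplus W_3 \subset V$ in ${_{\k G_V}^{\k G_V} \mathcal{YD}^{\Phi_{G_V}}}$. This is a direct sum of three simple objects with support group $G_{V'} = \langle g_1, g_2, g_3 \rangle = G_V$, so $(\Phi_{G_V})_{G_{V'}} = \Phi_{G_V}$ is nonabelian by hypothesis. Proposition \ref{p5.4}, applied inside ${_{\k G_V}^{\k G_V} \mathcal{YD}^{\Phi_{G_V}}}$, then identifies $V'$ as a minimal nondiagonal object. There is no real obstacle; the only subtlety worth emphasizing is that the minimality hypothesis on $V$ in ${_{\k G}^{\k G} \mathcal{YD}^\Phi}$ is used \emph{only} to extract the shape $V_1 \oplus V_2 \oplus V_3$ together with the nonabelianness of $\Phi_{G_V}$, after which minimality of $V'$ in the smaller category is a direct consequence of Proposition \ref{p5.4}.
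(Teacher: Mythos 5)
Your proposal is correct and follows essentially the same route as the paper: decompose $V=V_1\oplus V_2\oplus V_3$ using Proposition \ref{p5.4} (the paper invokes Lemma \ref{l2.4} for nonabelianness of $\Phi_{G_V}$), restrict to ${_{\k G_V}^{\k G_V}\mathcal{YD}^{\Phi_{G_V}}}$, pick a simple subobject of each $V_i$ there (necessarily of $G_V$-degree $g_i$), and apply Proposition \ref{p5.4} again to the resulting sum of three simples with support group $G_V$. The only difference is that you make explicit the semisimplicity/simple-subobject step and the remark that the $V_i$ need not stay simple, which the paper leaves implicit.
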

\begin{proof}
Write $V=V_1\oplus V_2\oplus V_3$ as a direct sum of simple objects in ${_{\k G}^{\k G} \mathcal{YD}^\Phi}$, and set $g_i=g_{V_i}$ for $1\leq i\leq 3$. Then $G_V=\langle g_1,g_2,g_3\rangle$. Because $V$ is nondiagonal, Lemma \ref{l2.4} implies that $\Phi_{G_V}$ is a nonabelian $3$-cocycle on $G_V$. For each $i\in \{1,2,3\}$, choose a simple object $V'_i\subset V_i$ in the category $_{\k G_V}^{\k G_V} \mathcal{YD}^{\Phi_{G_V}}$ and let $V'=V'_1\oplus V'_2\oplus V'_3$. It is clear that $G_{V'}=G_V$. Since $V'$ is a direct sum of $3$-simple objects and $\Phi_{G_V}$ is a nonabelian $3$-cocycle on $G_V=G_{V'}$, Proposition \ref{p5.4} shows that $V'$ is a minimal nondiagonal object in $_{\k G_V}^{\k G_V} \mathcal{YD}^{\Phi_{G_V}}$. 
\end{proof}

\begin{proposition}\label{p8.2}
Let $V\in {_{\k G}^{\k G} \mathcal{YD}^\Phi}$ be a minimal nondiagonal object. Then every pre-Nichols algebra of $V$ has infinite GKdim.
\end{proposition}
\begin{proof}
Because there exists a canonical projection from every pre-Nichols algebra $\mathcal{P}(V)$ onto $B(V)$, it suffices to show that $\GKdim(B(V))=\infty$.
By Lemma \ref{l3.1}, $B(V)$ is also a Nichols algebra in $_{\k G_V}^{\k G_V} \mathcal{YD}^{\Phi_{G_V}}$. According to Lemma \ref{l8.1}, there exists a minimal nondiagonal object $V'\subset V$ in ${_{\k G_V}^{\k G_V} \mathcal{YD}^{\Phi_{G_V}}}$ with $G_{V'}=G_V$. By Corollary \ref{c7.5}, we have $\GKdim(B(V'))=\infty$. Since $B(V')$ is an $\N_0$-graded subalgebra of $B(V)$, we conclude that $\GKdim(B(V))=\infty$.
\end{proof}
We are now ready to state our main result.
\begin{theorem}\label{t8.3}
Let $V\in\ _{\k G}^{\k G} \mathcal{YD}^\Phi$ be a nondiagonal object. Then every pre-Nichols algebra of $V$ has infinite GKdim.
\end{theorem}
\begin{proof}
It suffices to show that $\GKdim(B(V))=\infty$. Because $V\in {_{\k G}^{\k G} \mathcal{YD}^\Phi}$ is nondiagonal, Corollary \ref{cor5.5} guarantees the existence of a minimal nondiagonal subobject $V'\subset V$. By Proposition \ref{p8.2}, we have  $\GKdim(B(V'))=\infty$. Consequently, we obtain $\GKdim(B(V))=\infty$.
\end{proof}

As an immediate consequence, we classify all finite GK-dimensional Nichols algebras in $_{\k G}^{\k G} \mathcal{YD}^\Phi$. 

\begin{theorem}\label{t8.4}
Let $V\in{_{\k G}^{\k G} \mathcal{YD}^\Phi}$ be a finite-dimensional object. Then $B(V)$ has finite GKdim if and only if it is of diagonal type and the corresponding root system is finite, namely, an arithmetic root system.
\end{theorem}
\begin{proof}
Assume $\GKdim(B(V))<\infty$. By Theorem \ref{t8.3}, $B(V)$ must be of diagonal type. Then Theorem \ref{t4.2} forces the root system $\D(B(V))$ to be finite. Conversely, if $V$ is diagonal and the corresponding root system is finite, by Theorem \ref{t4.2} again we obtain $\GKdim(B(V))<\infty$.
\end{proof}

This classification leads to a structural description of finite GK-dimensional coradically graded pointed coquasi-Hopf algebras over finite abelian groups that are generated by group-like and skew-primitive elements.
\begin{corollary}\label{c8.5}
Let $H$ be a coradically graded pointed coquasi-Hopf algebra over $G$ generated by group-like and skew-primitive elements. If $H$ is finitely generated and $\GKdim(H)<\infty$, then $$H\cong B(V)\# \k G$$ for some $V\in{_{\k G}^{\k G} \mathcal{YD}^\Phi}$ whose    generalized Dynkin diagram $\mathcal{D}(V)$ is of finite type, where $\Phi$ is a $3$-cocycle on $G$ determined by the associator of $H$.
\end{corollary}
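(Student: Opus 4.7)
My plan is to reduce the statement to Theorem \ref{t8.4} via the coquasi-bosonization description of pointed coquasi-Hopf algebras. Since $H$ is coradically graded and pointed over the finite abelian group $G$, the structural decomposition recalled in the introduction gives $H \cong \k G \# R$, where $R = \oplus_{i \geq 0} R_i$ is a coradically graded connected braided Hopf algebra in ${^{\k G}_{\k G} \mathcal{YD}^\Phi}$ and the $3$-cocycle $\Phi$ is the one extracted from the associator of $H$. The coradical grading forces the primitive elements of $R$ to lie in $R_1 =: V$, so $R$ is a post-Nichols algebra of $V$ in the sense discussed in the introduction.

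The next step is to use the hypothesis that $H$ is generated by group-like and skew-primitive elements. Under the bosonization decomposition, the group-likes contribute $\k G$ and each skew-primitive element of $H$ projects to an element of $V = R_1$; hence $R$ is generated as an algebra by $V$. Combined with the finite generation of $H$, this also shows that $V$ is finite dimensional. An algebra quotient of $T(V)$ that is a braided Hopf algebra is precisely a pre-Nichols algebra of $V$, so $R$ is simultaneously pre-Nichols and post-Nichols. Consequently, the canonical surjection $T(V) \twoheadrightarrow R$ factors through $B(V)$, and the induced map $B(V) \to R$ is a bijection: it is surjective because both sides are generated by $V$, and injective because any nontrivial element of its kernel would produce primitive elements of $R$ outside of $V$, contradicting the post-Nichols property. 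Thus $R \cong B(V)$ and $H \cong \k G \# B(V)$.

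Finally, since $\k G$ is finite dimensional, bosonization does not alter the Gelfand-Kirillov dimension; hence $\GKdim(B(V)) = \GKdim(H) < \infty$. Applying Theorem \ref{t8.4}, $V$ is of diagonal type with a finite arithmetic root system, which by Corollary \ref{c4.3} and Proposition \ref{r4.4} is equivalent to $\mathcal{D}(V)$ being of finite type (in fact $\mathcal{D}(V) \in \mathfrak{D}$). The main delicate step that merits care is the identification $R \cong B(V)$: although the pre-Nichols/post-Nichols dichotomy makes this reduction clean in principle, one has to verify rigorously in the coquasi-Hopf setting that ``coradically graded plus generated by group-like and skew-primitive elements'' is indeed equivalent to the defining braided Hopf ideal of $R$ inside $T(V)$ being the maximal Nichols ideal $I(V)$. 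The remaining ingredients, namely finite-dimensionality of $V$ from finite generation, stability of $\GKdim$ under bosonization by a finite group, and the equivalence between finite root systems and Dynkin diagrams of finite type, are immediate from the results already assembled in the paper.
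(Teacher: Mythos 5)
Your proposal is correct and follows essentially the same route as the paper: decompose $H\cong \k G\# R$ via bosonization, use the coradical grading to get $P(R)=R_1=V$ and the generation hypothesis to get $R$ generated by $V$, conclude $R\cong B(V)$, and then invoke Theorem \ref{t8.4} together with the definition of finite type for $\mathcal{D}(V)$. The only nitpick is directional: since $R$ is a pre-Nichols algebra the canonical map is the projection $R\twoheadrightarrow B(V)$ (not a factorization $T(V)\to B(V)\to R$), but your key argument --- that a nonzero graded Hopf ideal in degrees $\geq 2$ would yield primitives of $R$ outside $V$, contradicting the coradical grading --- is exactly the standard identification $R\cong B(V)$ that the paper's proof leaves implicit.
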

\begin{proof}
Because $H$ is coradically graded, it decomposes as a bosonization  $H\cong R\# \k G$, where $R=\oplus_{i\geq 0}R_i$ is a coradically graded connected braided Hopf algebra in ${_{\k G}^{\k G} {\mathcal{YD}^\Phi}}$, and $\Phi$ is the $3$-cocycle on $G$ determined by the associator of $H$.  Let $P(R)$ denote the subspace of primitive elements of $R$. Since $R$ is coradically graded, we have $P(R)=R_1$. Furthermore, the assumption that $H$ is generated by group-like and skew-primitive elements implies that $R$ is generated by $P(R)=R_1$. Therefore, $R$ is a pre-Nichols algebra of $R_1$ such that $P(R)=R_1$, this forces $R$ to be the Nichols algebra $B(R_1)$.

Now let $V=R_1$, we obtain $H\cong B(V)\# \k G$. If $H$ is finitely generated and $\GKdim(H)<\infty$, then $V$ must be finite dimensional and $\GKdim(B(V))<\infty$. By Theorem \ref{t8.4}, $B(V)$ is of diagonal type and its root system is finite, hence the generalized Dynkin diagram $\mathcal{D}(V)$ is of finite type.
\end{proof}

 \section*{acknowledgments}
The author was supported by the National Natural Science Foundation of China (No. 12471020), the Natural Science Foundation of Chongqing (No. cstc2021 jcyj-msxmX0714), and the Fundamental Research Funds for the Central Universities (No. SWU-XDJH202305, SWU-KT25015). 




\end{document}